\begin{document}

\title[Liu morphisms]{On Liu morphisms in non-Archimedean geometry}
\author{Mingchen Xia}
\date{\today}

\begin{abstract}
We define Liu morphisms and quasi-Liu morphisms between Berkovich analytic spaces. We show that Liu morphisms and quasi-Liu morphisms behave as affine morphisms and quasi-affine morphisms of schemes in many aspects. 
\end{abstract}

\maketitle



\tableofcontents


\section{Introduction}

\subsection{Motivation}
In classical algebraic geometry, the theories of affine morphisms and quasi-affine morphisms play a prominent role. In the non-Archimedean world, it is highly desirable to have analogous results as well. However, there are two principal difficulties in the non-Archimedean setting: 
\begin{enumerate}
\item First of all, there is no satisfactory theory of quasi-coherent sheaves in non-Archimedean geometry. There is indeed an \emph{ad hoc} notion of quasi-coherent sheaves in rigid geometry defined by Conrad in \cite{Con06}: A quasi-coherent sheaf is a sheaf of modules which can be expressed as a filtered colimit of coherent sheaves locally. However, Conrad's notion of quasi-coherent sheaves does not behave as expected: On an affinoid space, the higher cohomologies of a quasi-coherent sheaf do not vanish in general. This makes it hard to handle affine morphisms in terms of quasi-coherent sheaves of algebras. The same problem persists in Berkovich geometry.
\item Secondly, a more severe problem was proposed by Liu \cite{Liu88}, \cite{Liu90}. It is shown that there is a quasi-compact, separated non-affinoid rigid space $X$, a morphism $f:X\rightarrow Y$ to an affinoid space $Y$, an admissible affinoid covering $\{U_i\}$ of $Y$ such that $f^{-1}U_i$ is affinoid for each $i$. See \cite[Proposition~3.3 and Section~5]{Liu90}. This means that the property that the inverse image of an affinoid domain is affinoid is \emph{not} G-local. 
\end{enumerate}
Recall that in classical algebraic geometry, we have the celebrated \emph{Serre's criterion} (\cite[Théorème~5.2.1]{EGAII}): Affine schemes can be characterized by cohomological triviality among quasi-compact separated schemes. Similarly, in non-Archimedean setting, we replace the usual local notion of affinoid spaces by cohomologically trivial spaces. Such spaces are studied by Maculan--Poineau in \cite{MP21} under the name of \emph{Liu spaces}, we follow their terminology. 
\begin{definition}[c.f. \cref{def:Liuspace}]
Let $k$ be a complete non-Archimedean valued field. A quasi-compact, separated $k$-analytic space $X$ (in the sense of Berkovich) is said to be \emph{Liu} if for any analytic extension $k'/k$, any coherent sheaf $\mathcal{F}$ on $X_{k'}$ is acyclic.
\end{definition}

On the morphism level, we define a \emph{Liu morphism} as a morphism under which the inverse image of a Liu domain is a Liu space, see \cref{def:Liumorp}.  
Similarly, we have a notion of quasi-Liu morphisms analogous to the classical notion of quasi-affine morphisms:
\begin{definition}[c.f. \cref{def:quasiLiumor}]
Let $f:X\rightarrow Y$ be a morphism of $k$-analytic spaces. We say $f$ is \emph{quasi-Liu} if for any Liu domain $Z$ in $Y$, $f^{-1}Z$ can be embedded in a Liu $k$-analytic space as a compact analytic domain and $H^0(f^{-1}Z,\mathcal{O}_X)$ is a Liu $k$-algebra (\cref{def:Liualge}).
\end{definition}

Similar to the situation in classical algebraic geometry, we prove a cohomological criterion of Liu morphisms when $Y$ is Liu in \cref{thm:Liumoreq}.

\textcolor{red}
{Unfortunately, as pointed out by Marco Maculan, contrary to the assertion in the previous versions of this paper, the notion of Liu morphisms is not G-local on the target, see an example due to Scholze--Weinstein in \cref{ex:notGlocal}.}

As for (1), due to the progress made by Ben-Bassat--Kremnizer in \cite{BBK17}, it is by far clear that the natural notion on a non-Archimedean analytic space is not that of the quasi-coherent sheaves, but the derived category of quasi-coherent sheaves instead. However, as we will see, in the special case of sheaves of Liu algebras studied below, the derived notion reduces to a \emph{bona fide} notion of quasi-coherence at the non-derived level. In particular, on a separated space, there is a global notion of quasi-coherent sheaves of Liu algebras, see \cref{def:qcohshLiualg}.

\subsection{Main results}

We fix a complete non-Archimedean valued field $k$. We allow the valuation on $k$ to be trivial. We work in the framework of Berkovich spaces as in \cite{Berk93}.

The main result says that Liu morphisms and quasi-coherent sheaves of Liu $k$-algebras are essentially equivalent:
\begin{theorem}[=\cref{cor:Liuequivalence}]
Let $X$ be a separated $k$-analytic space. Then the functor 
\[
	\Specrel_X: \QcohLiuAlgCat_{X,k} \rightarrow \LiuMorCat_{\rightarrow X,k}
\]
is an anti-equivalence of categories.
\end{theorem}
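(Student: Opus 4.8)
The plan is to reduce the global statement to the \emph{local} anti-equivalence between Liu $k$-algebras and Liu (affinoid-type) spaces, and then to glue along the G-topology of $X$ by Liu domains — which, since affinoid spaces are themselves Liu, form a net. Two structural inputs are decisive and are already available: first, every Liu space coincides with the Berkovich spectrum $\mathcal{M}(-)$ of its algebra of global sections, so on the level of Liu domains everything is governed by commutative algebra of Liu $k$-algebras; second, being a Liu morphism is G-local on the separated target $X$, which is exactly \cref{thm:Liumoreq}.

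First I would pin down the functor. Given a quasi-coherent sheaf of Liu $k$-algebras $\mathcal{A}$ on $X$ (\cref{def:qcohshLiualg}), for each Liu domain $U\subseteq X$ the value $\mathcal{A}(U)$ is a Liu $k$-algebra (\cref{def:Liualge}), and $\mathcal{M}(\mathcal{A}(U))$ is a Liu space equipped with a canonical morphism to $U$. Quasi-coherence — the identification $\mathcal{A}(V)=\mathcal{A}(U)\,\widehat{\otimes}_{\mathcal{O}_X(U)}\,\mathcal{O}_X(V)$ for Liu domains $V\subseteq U$ — produces canonical isomorphisms $\mathcal{M}(\mathcal{A}(V))\cong\mathcal{M}(\mathcal{A}(U))\times_U V$ obeying the cocycle condition, so that, $X$ being separated, these data glue to a $k$-analytic space $\Specrel_X\mathcal{A}$ with a morphism $\pi_{\mathcal{A}}\colon\Specrel_X\mathcal{A}\to X$ satisfying $\pi_{\mathcal{A}}^{-1}U=\mathcal{M}(\mathcal{A}(U))$ for every Liu domain $U$. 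In particular each restriction $\pi_{\mathcal{A}}^{-1}U_i\to U_i$ over a G-covering by Liu domains has Liu source and is itself Liu, so \cref{thm:Liumoreq} gives that $\pi_{\mathcal{A}}$ is a Liu morphism (\cref{def:Liumorp}). Contravariant functoriality of $\mathcal{M}(-)$ on $k$-algebra maps makes $\Specrel_X$ a functor.

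Next I would build the candidate quasi-inverse $\Gamma$. For a Liu morphism $f\colon Z\to X$ put $\Gamma(f):=f_*\mathcal{O}_Z$, a sheaf on the G-topology of $X$ whose value on a Liu domain $U$ is $H^0(f^{-1}U,\mathcal{O}_Z)$; since $f^{-1}U$ is a Liu space this is a Liu $k$-algebra. The substantive point is quasi-coherence: for Liu domains $V\subseteq U$ one has $f^{-1}V=f^{-1}U\times_U V$, and because the higher cohomology of the structure sheaf vanishes on Liu spaces (the defining acyclicity, even after base change of scalars), a base-change argument yields $H^0(f^{-1}V,\mathcal{O}_Z)=H^0(f^{-1}U,\mathcal{O}_Z)\,\widehat{\otimes}_{\mathcal{O}_X(U)}\,\mathcal{O}_X(V)$; hence $\Gamma(f)\in\QcohLiuAlgCat_{X,k}$. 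This is manifestly functorial in $f$.

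It remains to produce the two natural isomorphisms. For $\Gamma\circ\Specrel_X\cong\mathrm{id}$: over a Liu domain $U$ one has $H^0(\mathcal{M}(\mathcal{A}(U)),\mathcal{O})=\mathcal{A}(U)$ naturally, so $\Gamma(\Specrel_X\mathcal{A})\cong\mathcal{A}$. For $\Specrel_X\circ\Gamma\cong\mathrm{id}$: the counit of the local adjunction gives a natural $X$-morphism $\Specrel_X(f_*\mathcal{O}_Z)\to Z$ whose restriction over each Liu domain $U$ is $\mathcal{M}(H^0(f^{-1}U,\mathcal{O}_Z))\to f^{-1}U$, an isomorphism precisely because the Liu space $f^{-1}U$ is the Berkovich spectrum of its global sections; gluing over a G-covering by Liu domains gives $\Specrel_X(\Gamma(f))\cong Z$ over $X$. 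Full faithfulness follows from the same local anti-equivalence, since an $X$-morphism $\Specrel_X\mathcal{B}\to\Specrel_X\mathcal{A}$ restricts over Liu domains $U$ to $\mathcal{M}(\mathcal{B}(U))\to\mathcal{M}(\mathcal{A}(U))$ over $U$, corresponding uniquely to $\mathcal{A}(U)\to\mathcal{B}(U)$, and these patch to a unique morphism $\mathcal{A}\to\mathcal{B}$. The main obstacle, beyond the bookkeeping of the gluing on the separated space $X$, is the quasi-coherence of $f_*\mathcal{O}_Z$: the base-change identity for $H^0$ is exactly where the acyclicity built into the notion of Liu space — rather than mere affinoidness — is genuinely needed, and it is what rescues the statement from Liu's obstruction recalled as problem~(2) above.
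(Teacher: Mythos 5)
Your route is essentially the paper's: build $\Specrel_X\mathcal{A}$ so that it restricts to $\Sp\mathcal{A}(U)$ over each Liu domain $U$, conclude from \cref{thm:Liu} (every Liu space is the spectrum of its global sections) that $\pi_{\mathcal{A}}$ is Liu, and take $f\mapsto f_*\mathcal{O}$ as quasi-inverse, checking the unit and counit Liu-domain by Liu-domain. The only packaging difference is that the paper obtains $\Specrel_X\mathcal{A}$ as the space representing the functor $T\mapsto\{(f,\varphi)\}$ via \cref{thm:rep} and \cref{prop:rep}, rather than by gluing by hand.

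However, two steps you dismiss as bookkeeping are exactly where the paper does real work, and as written they are soft spots. First, gluing Berkovich spaces along a G-covering is not automatic: \cref{thm:rep} requires the subfunctors to be represented by closed (or open) analytic domains with a local finiteness condition, and \cref{prop:rep} first assumes $X$ paracompact and then removes that hypothesis by Berkovich's covering argument; your one-line ``these data glue'' should be replaced by an appeal to this (or an equivalent careful gluing). Second, and more substantively, the quasi-coherence of $f_*\mathcal{O}_Z$ does \emph{not} follow from acyclicity of the structure sheaf plus ``a base-change argument.'' Quasi-coherence in the sense of \cref{def:qcoh} demands that $M=H^0(f^{-1}U,\mathcal{O}_Z)$ be \emph{transversal} over $A=\mathcal{O}_X(U)$, i.e.\ $M\otL_A C\to M\hat{\otimes}_A C$ is an isomorphism for all Liu domains $\Sp C\subseteq \Sp A$, and the identification $H^0(f^{-1}V,\mathcal{O}_Z)=H^0(f^{-1}U,\mathcal{O}_Z)\hat{\otimes}_{\mathcal{O}_X(U)}\mathcal{O}_X(V)$ comes from \cref{lma:Liutensor} (the computation of global sections of a fibre product), not from cohomology vanishing. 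Both points are precisely what \cref{thm:qcohpush} and \cref{cor:pushqcoh} supply, and their proofs rest on the transversality theorem \cref{thm:liut}; citing these in place of the acyclicity heuristic closes the gap, after which your argument coincides with the paper's.
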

Here $\QcohLiuAlgCat_{X,k}$ is the category of quasi-coherent sheaves of Liu $k$-algebras on $X$, $\LiuMorCat_{\rightarrow X,k}$ is the category of Liu morphisms $Y\rightarrow X$. The functor $\Specrel_X$ is the relative spectrum functor defined in \cref{def:relspec}. This result is analogous to the classical result on affine morphisms and quasi-coherent sheaves of algebras (\cite[Proposition~1.2.7, Proposition~1.3.1]{EGAII}).

\subsection{Structure of the paper}
In \cref{sec:pre}, we recall some basic results about Berkovich analytic spaces and the language developed by Ben-Bassat and Kremnizer (\cite{BBK17}). Due to the lack of references, we also prove a representability theorem (\cref{thm:rep}) about presheaves on the category of analytic spaces.

In \cref{sec:Liusp}, we recall the basic theory of Liu spaces and Liu algebras. We prove that Liu algebras behave very similar to affinoid algebras in many aspects.

In \cref{sec:Liumor}, we introduce Liu morphisms and study their relation to quasi-coherent sheaves of Liu algebras.

In \cref{sec:quasiLiu}, we introduce and study quasi-Liu morphisms.


In \cref{sec:open}, we give a list of unsolved problems related to this work.

We collect results from \cite{BBK17} in \cref{sec:BBK}. 

\subsection{Conventions}

Let $k$ be a complete non-Archimedean valued field. An \emph{analytic extension} of $k$ is a complete non-Archimedean valued field $k'$ containing $k$ such that the restriction of the valuation on $k'$ to $k$ coincides with the given valuation on $k$. We denote the spectrum of a Banach algebra $A$ by $\Sp A$ instead of the more common notation $\mathcal{M}(A)$. 

\subsection{Acknowledgments}

I would like to thank Yanbo Fang for discussions, Jérôme Poineau for comments on the draft and Michael Temkin for answering questions about locally affinoid algebras.
I am indebted to the anonymous referee for many valuable suggestions and especially for pointing out several mistakes in the original version of the manuscript.
I would like to thank Marco Maculan for pointing out a mistake in the original \cref{thm:Liumoreq}.

\section{Preliminaries}\label{sec:pre}
Let $k$ be a complete non-Archimedean valued field.
\subsection{Analytic spaces}
In this paper, by a $k$-analytic space, we mean a $k$-analytic space in the sense of \cite{Berk93}. The category of $k$-analytic spaces is denoted by $\AnaCat_k$.
For each $k$-analytic space $X$, we endow $X$ with the G-topology as in \cite{Berk93}. The corresponding site is still denoted by $X$. There is a natural sheaf of rings $\mathcal{O}_{X_G}$ making $X$ a ringed site. We always omit the subindex G and write $\mathcal{O}_X$ instead. The category of coherent sheaves on $X$ is denoted by $\CohCat_X$.
 
Strict $k$-analytic spaces are defined as in \cite{Berk93}. Recall that by a celebrated result of Temkin \cite{Tem04}, strict $k$-analytic spaces form a full subcategory of the category of $k$-analytic spaces if $k$ is non-trivially valued. The category of $k$-affinoid spaces is denoted by $\AffCat_k$, see \cite{Berk12}. The category of $k$-affinoid algebras is denoted by $\AffAlgCat_k$. There is an equivalence between $\AffCat_k$ and $\AffAlgCat_k$, given by the functor of global sections $X\mapsto H^0(X,\mathcal{O}_X)$ and the functor of Berkovich spectrum $A\mapsto \Sp A$.

\subsection{A representability theorem}
The following result is analogous to \cite[Proposition~4.5.4]{EGAI}.
\begin{theorem}\label{thm:rep}
Let $F$ be a presheaf on $\AnaCat_k$. Assume that
\begin{enumerate}
	\item $F$ satisfies the sheaf property for the G-topology, namely, for any $k$-analytic space $X$, any G-covering $\{U_i\}$ of $X$, $F(X)$ is the equalizer of 
	\[
	\prod_i F(U_i)\rightrightarrows \prod_{i,j}F(U_i\cap U_j)\,.
	\]
	\item There is a family $\{F_i\}_i$ of subfunctors of $F$ such that 
	\begin{enumerate}
		\item Each $F_i$ is representable by a $k$-analytic space $X_i$.
		\item Each $F_i\rightarrow F$ is representable by a \emph{closed} (resp. \emph{open}) analytic domain. In particular, after base change to $X_j$, $F_i\rightarrow F$ is represented by a closed (resp. open) analytic domain $U_{ji}$. In the closed case, we assume furthermore that for each $i$, the collection of $j$ such that $U_{ij}\neq \emptyset$ is finite.
		\item The collection $F_i$ covers $F$.
	\end{enumerate}
\end{enumerate}
Then $F$ is representable.
\end{theorem}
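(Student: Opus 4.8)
The plan is to follow the classical argument of \cite[Proposition~4.5.4]{EGAI}: build a $k$-analytic space $X$ by gluing the $X_i$ along the overlap domains, and then use the sheaf property~(1) and the covering property~(2)(c) to identify $h_X$ with $F$. The first step is to produce a gluing datum. Fix $i,j$. Base changing the monomorphism $F_j\hookrightarrow F$ along $F_i=h_{X_i}\to F$ represents the subfunctor $F_i\times_F F_j$ of $h_{X_i}$ by a closed (resp.\ open) analytic domain $U_{ij}\subseteq X_i$, while base changing $F_i\hookrightarrow F$ along $F_j=h_{X_j}\to F$ represents the same functor by $U_{ji}\subseteq X_j$. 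As both represent $F_i\times_F F_j$, Yoneda supplies a canonical isomorphism $\varphi_{ij}\colon U_{ij}\xrightarrow{\sim}U_{ji}$, with $U_{ii}=X_i$, $\varphi_{ii}=\mathrm{id}_{X_i}$ and $\varphi_{ji}=\varphi_{ij}^{-1}$. Running the same argument with the triple fibre products $F_i\times_F F_j\times_F F_\ell$ shows that $\varphi_{ij}$ carries $U_{ij}\cap U_{i\ell}$ isomorphically onto $U_{ji}\cap U_{j\ell}$ and that the cocycle identity $\varphi_{i\ell}=\varphi_{j\ell}\circ\varphi_{ij}$ holds there; this is a routine diagram chase whose only inputs are the universal property of fibre products and the Yoneda lemma.

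Next I glue. In the open case the datum $(X_i,U_{ij},\varphi_{ij})$ glues to a $k$-analytic space $X$ in which each $X_i$ is an open analytic domain with $X_i\cap X_j=U_{ij}$; this is the usual gluing of locally ringed G-topological spaces and poses no difficulty. In the closed case, the finiteness clause in~(2)(b) says precisely that $\{X_i\}$ is a locally finite family of closed domains, which is exactly the condition needed to invoke Berkovich's construction of $k$-analytic spaces from atlases \cite[\S1]{Berk93}: the datum again glues to a $k$-analytic space $X$ with each $X_i$ a closed analytic domain, $X_i\cap X_j=U_{ij}$, and $\{X_i\}$ a quasinet. In either case $\{X_i\hookrightarrow X\}$ is a G-covering of $X$.

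Finally I check that $X$ represents $F$. Let $s_i\in F(X_i)$ be the image of $\mathrm{id}_{X_i}$ under $h_{X_i}(X_i)\xrightarrow{\sim}F_i(X_i)\hookrightarrow F(X_i)$. By construction of $\varphi_{ij}$ the restrictions of $s_i$ and $s_j$ to $X_i\cap X_j=U_{ij}$ agree, so the sheaf property~(1) for the G-covering $\{X_i\}$ of $X$ yields a unique $s\in F(X)$ with $s|_{X_i}=s_i$; by Yoneda this is a natural transformation $\eta\colon h_X\to F$ restricting to the given isomorphism $h_{X_i}\cong F_i\subseteq F$ on each $X_i$, compatibly with the overlap identifications (this compatibility is precisely what the gluing construction encodes). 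It remains to see that $\eta$ is bijective on $T$-points for every $T\in\AnaCat_k$. Given $t\in F(T)$, the covering property~(2)(c) furnishes a G-covering $\{T_a\}$ of $T$ with $t|_{T_a}\in F_{i(a)}(T_a)$, hence a map $g_a\colon T_a\to X_{i(a)}\hookrightarrow X$ with $\eta(g_a)=t|_{T_a}$; on $T_a\cap T_b$ the element $t|_{T_a\cap T_b}$ lies in $F_{i(a)}\times_F F_{i(b)}=h_{U_{i(a)i(b)}}$, and since $U_{i(a)i(b)}\subseteq X_{i(a)}$ and $U_{i(b)i(a)}\subseteq X_{i(b)}$ are identified inside $X$ via $\varphi_{i(a)i(b)}$, the restrictions of $g_a$ and $g_b$ coincide; as $h_X$ is a G-sheaf (morphisms of $k$-analytic spaces satisfy G-descent) the $g_a$ glue to $g\colon T\to X$ with $\eta(g)=t$, proving surjectivity, and the same computation applied to two maps $g,g'$ with $\eta(g)=\eta(g')$ proves injectivity. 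Hence $\eta$ is an isomorphism and $F$ is representable.

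I expect the closed case of the gluing step to be the only real obstacle: one has to verify that the finiteness hypothesis in~(2)(b) is exactly what makes the glued object a \emph{bona fide} $k$-analytic space in the sense of \cite{Berk93} and turns $\{X_i\}$ into a net, so that the G-topological sheaf property may be applied on $X$. The functorial bookkeeping — the cocycle condition and the bijectivity of $\eta$ — is routine, but must be carried out with some care since $F$ is assumed to be a sheaf only for the G-topology.
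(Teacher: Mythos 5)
Your proposal is correct and follows essentially the same route as the paper: produce the transition isomorphisms between the domains $U_{ij}$ and $U_{ji}$ from the universal elements via Yoneda, glue the $X_i$ using Berkovich's gluing proposition (with the finiteness hypothesis handling the closed case), and then verify representability from the sheaf property and the covering hypothesis, as in EGA~I, Proposition~4.5.4. The paper leaves the cocycle condition and the bijectivity of $\eta$ as formal verifications with a reference to the Stacks Project, so your added detail is consistent with, not divergent from, its argument.
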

\begin{proof}
Let $\xi_i\in F_i(X_i)$ be the universal family of the presheaf $F_i$. By assumption a morphism of $k$-analytic spaces $T\rightarrow X_i$ factors through $U_{ij}$ if{f} $\xi_i|_T\in F_j(T)$. In particular, $\xi_i|_{U_{ij}}\in F_j(U_{ij})$. So we get a morphism $f_{ij}:U_{ij}\rightarrow X_j$ such that $f_{ij}^*\xi_j=\xi_i|_{U_{ij}}$. By definition of $U_{ji}$, we know that $f_{ij}$ factors through $U_{ji}$. Now observe that $(f_{ij}\circ f_{ji})^*\xi_j=f_{ji}^*\xi_i=\xi_j$, we conclude that $f_{ij}\circ f_{ji}=\mathrm{id}_{U_{ji}}$. In particular, all $f_{ij}$ are isomorphisms. It is formal to see that the glueing conditions are satisfied by the $f_{ij}$'s, hence we can glue the $X_i$'s together to get a $k$-analytic space $X$ by \cite[Proposition~1.3.3]{Berk93}. It is formal to check that $X$ together with the glueing $\xi$ of $\xi_i$ represents $F$. We refer to \cite[\href{https://stacks.math.columbia.edu/tag/01JJ}{Tag 01JJ}]{stacks-project} for the omitted details.
\end{proof}

\subsection{Polyradii}

\begin{definition}
	A \emph{polyradius} is an element $r\in \mathbb{R}_{>0}^n$ for some $n\in \mathbb{N}$. A polyradius $r$ is \emph{$k$-free} if the components of $r$ are linearly independent as elements in the $\mathbb{Q}$-vector space $\mathbb{Q}\otimes_{\mathbb{Z}}(\mathbb{R}_{>0}/|k^*|)$.

	For any $k$-polyradius $r\in \mathbb{R}_{>0}^n$, define $k_r$ as the $k$-affinoid algebra of formal series 
	\[
		\left\{\,\sum_{\alpha\in \mathbb{Z}^n} a_{\alpha}T^{\alpha}\in k[[T_1,\ldots,T_n]] \mid a_{\alpha}\in k, |a_{\alpha}|r^{\alpha}\to 0 \text{ when } |\alpha|\to \infty  \,\right\}
	\]
	endowed with the multiplicative norm $\sum_{\alpha\in \mathbb{Z}^n} a_{\alpha}T^{\alpha}\mapsto \max_{\alpha\in \mathbb{Z}^n} a_{\alpha}T^{\alpha}$.

	When $r$ is $k$-free, $k_r$ is a field. 

	For a given $k$-free polyradius $r$, a given Banach $k$-algebra $A$, for any Banach $A$-module $M$, we write $A_r=A\hat{\otimes}_k k_r$, $M_r=M\hat{\otimes}_k k_r$. Note that $A_r$ is a Banach $k_r$-algebra and $M_r$ is a Banach $A_r$-module.
	Similarly, given any $k$-analytic space, write $X_r:=X\times_{\Sp k} \Sp k_r$.
\end{definition}

\subsection{The category of Banach modules}

We briefly summarize a few results in \cite{BBK17}. For the basic theory of quasi-Abelian categories, see \cite{Sch99}. 

Let $\BanCat_k$ be the category of Banach $k$-modules, where morphisms are bounded homomorphisms. Recall that $\BanCat_k$ is a closed symmetric  monoidal quasi-Abelian category with all finite limits and finite colimits, where the $\otimes$ operator is given by the completed tensor product $\hat{\otimes}$. Moreover, finite products and finite coproducts coincide. The category $\BanCat_k$ has enough projectives. All projective objects in $\BanCat_k$ are flat in the sense of \cite{BBB16}. We have derived categories $\mathrm{D}^*(\BanCat_k)$, where $*$ means $+$, $-$, $\mathrm{b}$ or empty. Let $\BanAlgCat_k$ be the category of Banach $k$-algebras, which is also the category of algebras in the symmetric monoidal category $\BanCat_k$ in the abstract sense.
Let $A\in \BanAlgCat_k$ be a Banach $k$-algebra. Let $\BanModCat_A$ be the category of Banach $A$-modules, which is also the category of $A$-modules in the symmetric monoidal category $\BanCat_k$ in the abstract sense. Recall that $\BanModCat_A$ is also a closed symmetric  monoidal quasi-Abelian category with all finite limits and finite colimits, where the $\otimes$ operator is also given by $\hat{\otimes}$.
We write $\mathrm{D}^*(A)=\mathrm{D}^*(\BanModCat_A)$.

\begin{definition}\label{def:trans}
Let $f:A\rightarrow B$ be a morphism in $\BanAlgCat_k$. Let $M\in \BanModCat_A$. We say that $M$ is \emph{transversal} to $f$ if the natural morphism
\[
	M\otL_A B\rightarrow M\hat{\otimes}_A B
\]
in $\mathrm{D}^-(A)$ is an isomorphism.
\end{definition}

\begin{proposition}[{\cite[Proposition~2.1.2]{Berk12}}]\label{prop:flatkr}
For any $k$-free polyradius, the Banach $k$-module $k_r$ is flat in $\BanCat_k$: for any admissible exact short sequence $0\rightarrow E\rightarrow F\rightarrow G\rightarrow 0$ in $\BanCat_k$,  the following sequence is also admissible and exact:
\[
0\rightarrow E_r\rightarrow F_r\rightarrow G_r\rightarrow 0\,.
\]
\end{proposition}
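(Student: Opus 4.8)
The plan is to strip away the algebra structure of $k_r$ and work purely at the level of Banach $k$-modules, where $k_r$ is visibly a $c_0$-space. Concretely, forgetting the multiplication, $k_r$ is the space $c_0(\mathbb{Z}^n,k)$ of families $(a_\alpha)_{\alpha\in\mathbb{Z}^n}$ in $k$ with $|a_\alpha|r^\alpha\to 0$, normed by $\sup_\alpha|a_\alpha|r^\alpha$, the monomial $T^\alpha$ corresponding to the $\alpha$-th standard vector, of norm $r^\alpha$; the hypothesis that $r$ be $k$-free plays no role for flatness, it only makes the norm multiplicative. I would then reduce Proposition~\ref{prop:flatkr} to two elementary assertions: (a) for every Banach $k$-module $M$ there is a natural isometric isomorphism $M\hat\otimes_k k_r\cong c_0(\mathbb{Z}^n,M)$, where the right-hand side is the space of families $(m_\alpha)$ in $M$ with $\|m_\alpha\|r^\alpha\to 0$ under $\sup_\alpha\|m_\alpha\|r^\alpha$; and (b) the functor $M\mapsto c_0(\mathbb{Z}^n,M)$ (with the fixed weights $r^\alpha$) is exact in the quasi-Abelian sense, i.e.\ sends admissible short exact sequences to admissible short exact sequences. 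Combining (a) and (b) gives the statement.

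For (a), I would work on the dense subspace $M\otimes_k\bigl(\bigoplus_\alpha^{\mathrm{alg}} k\,T^\alpha\bigr)=\bigoplus_\alpha^{\mathrm{alg}} M\,T^\alpha$ and compute the completed-tensor seminorm of a finite sum $z=\sum_\alpha m_\alpha T^\alpha$ directly. The ultrametric inequality immediately yields $\|z\|_{\hat\otimes}\le\max_\alpha\|m_\alpha\|r^\alpha$, while applying to $z$ the bounded operator $\mathrm{id}_M\hat\otimes\lambda_\beta$ induced by the $\beta$-th coordinate functional $\lambda_\beta\colon k_r\to k$ (of operator norm $r^{-\beta}$) gives $\|m_\beta\|r^\beta\le\|z\|_{\hat\otimes}$ for each $\beta$; hence the tensor seminorm on $\bigoplus_\alpha^{\mathrm{alg}} M\,T^\alpha$ is exactly $\sup_\alpha\|m_\alpha\|r^\alpha$. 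Since $\bigoplus^{\mathrm{alg}}$ is dense in $k_r$, passing to completions identifies $M\hat\otimes_k k_r$ with the completion of that normed space, which is $c_0(\mathbb{Z}^n,M)$, functorially in $M$. This computation is the only step that requires genuine (if routine) care, and the non-Archimedean triangle inequality is what makes it clean.

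For (b), let $0\to E\xrightarrow{u} F\xrightarrow{v} G\to 0$ be admissible exact. Replacing the norms on $E$ and $G$ by equivalent ones — harmless, since $-\hat\otimes_k k_r$ is a functor and the admissibility and exactness of a sequence are invariant under isomorphism of sequences — I may assume $u$ is an isometry onto a closed subspace and $v$ is the metric quotient map $F\to F/u(E)$. Then $c_0(\mathbb{Z}^n,E)\to c_0(\mathbb{Z}^n,F)$ is again isometric, with closed image the families valued in $u(E)$; the map $c_0(\mathbb{Z}^n,F)\to c_0(\mathbb{Z}^n,G)$ is surjective, and lifting coordinatewise with $\|f_\alpha\|\le(1+\varepsilon)\|v(f_\alpha)\|$ (scale invariance of the quotient norm, with $f_\alpha=0$ when $v(f_\alpha)=0$, so that $(f_\alpha)$ again lies in $c_0$) shows it is strict with the original quotient norm; and its kernel is precisely the families valued in $\ker v=u(E)$, i.e.\ the image of the previous map. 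Thus $c_0(\mathbb{Z}^n,-)$ is exact, and composing with the identification of the previous paragraph proves the proposition. I would add, as a remark, the alternative one-line route: the lifting property just used shows directly that every $c_0$-space is projective in $\BanCat_k$, so $k_r$ is projective, hence flat by the fact recalled above — but the concrete argument makes the statement self-contained.
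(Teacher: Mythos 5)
Your argument is correct, but it is worth noting that the paper does not prove \cref{prop:flatkr} at all: it is quoted directly from Berkovich (the cited Proposition~2.1.2), so the comparison is really with Berkovich's original argument, and your proof is essentially that standard one, made self-contained. The two key steps are sound: the isometric identification $M\hat\otimes_k k_r\cong c_0(\mathbb{Z}^n,M)$ (your upper bound via the ultrametric inequality plus the lower bound via the coordinate functionals $\lambda_\beta$ of norm $r^{-\beta}$ pins the tensor seminorm down exactly, and density of the finitely supported Laurent polynomials lets you pass to completions), and the coordinatewise $(1+\varepsilon)$-lifting, after the harmless renorming that makes $u$ isometric and $v$ the metric quotient map, which shows $c_0(\mathbb{Z}^n,-)$ preserves admissible short exact sequences. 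Your side remarks are also accurate and add value relative to the bare citation: the $k$-freeness of $r$ is indeed irrelevant for flatness (it is only needed to make $k_r$ a valued field, which is what the rest of the paper uses it for), and the same lifting argument shows $k_r$ is projective in $\BanCat_k$, so flatness also follows from the general fact, recalled in the preliminaries, that projective objects of $\BanCat_k$ are flat. The only point to state carefully if this were written out in full is the one you implicitly handle: the lower bound $\|m_\beta\|r^\beta\le\|z\|_{\hat\otimes}$ must be derived from arbitrary representations of $z$ in $M\otimes_k k_r$ (which the functional argument does), not just from representations supported on monomials, since the restriction of the tensor seminorm to the dense subspace could a priori be smaller than the intrinsic one.
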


\section{Liu spaces and Liu algebras}\label{sec:Liusp}
Let $k$ be a complete non-Archimedean valued field.

\subsection{Liu spaces}
In this section, we recall the basic theory of Liu $k$-analytic spaces following \cite{MP21} and \cite{Liu90}.

\begin{definition}[{\cite[Definition~1.9]{MP21}}]\label{def:Liuspace}
A $k$-analytic space $X$ is called \emph{Liu} if 
\begin{enumerate}
	\item $X$ is quasi-compact, separated.
	\item $X$ is holomorphically separable: for any $x,y\in X$, $x\neq y$, there is $f\in H^0(X,\mathcal{O}_X)$ such that $|f(x)|\neq |f(y)|$.
	\item $\mathcal{O}_X$ is universally acyclic: for any analytic extension $k'/k$, $H^i(X_{k'},\mathcal{O}_{X,k'})=0$ for any $i>0$.
\end{enumerate}
A morphism of Liu $k$-analytic spaces is a morphism of the underlying $k$-analytic spaces. We denote the category of Liu $k$-analytic spaces by $\LiuCat_k$.
\end{definition}

\begin{example}
A $k$-affinoid space is a Liu $k$-analytic space. But the converse fails in general. We refer to \cite[Section~5]{Liu90} for details. In fact, the theory of non-Archimedean pinching in \cite{Tem21} gives plenty of such examples.
\end{example}

\begin{definition}\label{def:Liudom}
Let $X$ be a $k$-analytic space. An analytic domain $Z$ of $X$ is called a \emph{Liu domain} if $Z$ is a Liu $k$-analytic space.
\end{definition}

\begin{definition}Let $X$ be a $k$-analytic space.
We say $X$ is \emph{cohomologically Stein} if for any coherent sheaf of $\mathcal{O}_X$-modules $\mathcal{F}$, 
\[
	H^i(X,\mathcal{F})=0\,,\quad i>0\,.
\]
We say $X$ is \emph{universally cohomologically Stein} if for any analytic extension $k'/k$, $X_{k'}$ is cohomologically Stein.
\end{definition}

\begin{theorem}[{\cite[Theorem~1.11]{MP21}, \cite[Théorème~2]{Liu90}}]\label{thm:Liuchar}
Let $X$ be a separated, quasi-compact $k$-analytic space. Then the following are equivalent:
\begin{enumerate}
	\item $X$ is Liu.
	\item $X$ is universally cohomologically Stein.
	\item $X$ is holomorphically separable and $\mathcal{O}_X$ is universally acyclic.
\end{enumerate}
Moreover, if $k$ is non-trivially valued and $X$ is strict, then the conditions are equivalent to
\begin{enumerate}[resume]
	\item $X$ is rig-holomorphically separable and $\mathcal{O}_X$ is acyclic.
\end{enumerate}
\end{theorem}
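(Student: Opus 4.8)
The plan is to treat the three‑way equivalence in two stages and then address the strict refinement separately. The equivalence of (1) and (3) needs no work: under the standing hypothesis that $X$ is separated and quasi-compact, \cref{def:Liuspace} says precisely that $X$ is Liu if and only if it is holomorphically separable and $\mathcal{O}_X$ is universally acyclic, so (1) and (3) are literally the same condition. All the content therefore lies in relating these to the universal cohomological Steinness (2) — this is \cite[Theorem~1.11]{MP21}, refining \cite[Théorème~2]{Liu90} — and I would organise the argument as (2) $\Rightarrow$ (3) and (3) $\Rightarrow$ (2).

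For (2) $\Rightarrow$ (3): taking $\mathcal{F} = \mathcal{O}_{X_{k'}}$ for every analytic extension $k'/k$ immediately yields universal acyclicity, so only holomorphic separability has to be produced. Here I would run the non-Archimedean analogue of the argument for Serre's criterion \cite[Théorème~5.2.1]{EGAII}: given distinct points $x,y$, use that $X$ is Hausdorff and quasi-compact to separate them by analytic domains, build a coherent ideal sheaf $\mathcal{I}\subseteq\mathcal{O}_X$ whose associated closed subspace meets a neighbourhood of one of the two points but not the other, and exploit the vanishing of $H^1(X,\mathcal{I})$ — a special case of (2) — to promote a locally defined section of $\mathcal{O}_X/\mathcal{I}$ to a global $f$ with $|f(x)|\neq|f(y)|$. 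The input on affinoid pieces (acyclicity of coherent sheaves, finiteness of global sections) is Tate's theory \cite{Berk12}.

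The implication (3) $\Rightarrow$ (2) is the hard one, and is essentially Liu's theorem. I would emphasise up front why the obvious strategy fails: given a finite affinoid G-covering $\{X_i\}$ of $X$, whose multi-intersections are affinoid because $X$ is separated, one is tempted to compute cohomology by Čech/Mayer–Vietoris and induct on the number of affinoids, but a partial union $X_1\cup\dots\cup X_m$ need not again satisfy (3) — this is precisely the failure of G-locality discovered by Liu (\cite{Liu90}; the example is a $\mathbb{P}^1$-like space glued from two affinoid discs along an annulus, on which global sections over the overlap do not extend). The correct route is global: one uses holomorphic separability together with quasi-compactness to produce \emph{finitely many} bounded global functions separating points and cutting $X$ out inside an ambient space for which universal cohomological Steinness is available a priori, realises $X$ as a closed analytic subspace of such a space, and then transfers acyclicity along the coherent pushforward of a closed immersion, which is exact and compatible with analytic base change — reducing a general extension $k'/k$ to the fields $k_r$ by \cref{prop:flatkr}. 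The genuine obstacle, and the bulk of \cite{MP21}, is establishing that this embedding can be built with finite data; absent G-locality there is no shortcut, so I would cite \cite[Theorem~1.11]{MP21} rather than reprove it.

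Finally, for the extra characterisation (4) in the strict case: when $k$ is non-trivially valued and $X$ is strict, Temkin's identification of strict $k$-analytic spaces with rigid spaces \cite{Tem04} and the density of rigid points let one run the embedding argument over $k$ itself, so the weaker hypotheses — acyclicity of $\mathcal{O}_X$ rather than universal acyclicity, and rig-holomorphic separability — already suffice; acyclicity is in any case insensitive to the extensions $k_r$ by \cref{prop:flatkr}. The details are in \cite[Theorem~1.11]{MP21} and \cite[Théorème~2]{Liu90}.
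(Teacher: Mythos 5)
The first thing to say is that the paper does not prove this statement at all: it is quoted verbatim from \cite[Theorem~1.11]{MP21} and \cite[Théorème~2]{Liu90}, the only in‑paper content being the remark that in (4) plain acyclicity suffices, which is again attributed to \cite{MP21}. So your overall strategy — observe that (1) and (3) coincide by \cref{def:Liuspace}, and defer the substantial implications to the cited sources — is in line with what the paper actually does, and the opening reduction is correct.

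However, the two sketches you interpolate are not arguments that would survive if one tried to execute them, and it is worth naming why. For (2) $\Rightarrow$ (3), your Serre‑criterion device of a coherent ideal sheaf isolating one of the two points only makes sense for rigid points: a Berkovich point of type $2$, $3$ or $4$ is not the support of any closed analytic subspace, so no coherent $\mathcal{I}$ ``meets a neighbourhood of one point but not the other'' in the way you need, and holomorphic separability in \cref{def:Liuspace} is a statement about the seminorms $|f(x)|$, $|f(y)|$ for \emph{all} points of $X$. This is precisely where the word ``universally'' in (2) is used: one extends the base field to make the two points rigid and then descends the norm inequality, and your sketch never invokes the hypothesis for any $k'\neq k$ in this implication. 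For (3) $\Rightarrow$ (2), your proposed mechanism — realise $X$ as a \emph{closed} analytic subspace of an ambient space known to be universally cohomologically Stein and push forward — cannot be the route: if the ambient space is affinoid, a closed analytic subspace is again affinoid, contradicting the existence of non‑affinoid Liu spaces (e.g.\ \cite[Section~5]{Liu90}), and if it is not affinoid the Steinness of the ambient space is exactly what is in question. What \cite{MP21} (following Liu) actually uses is a \emph{locally closed} immersion into an affinoid together with a finite rational covering over which the immersion is Runge — the same structural input quoted in the paper's proof of \cref{thm:Liu}. Since you cite \cite{MP21} for the hard implication in any case this does not invalidate your proposal as a whole, but as written the sketches would mislead a reader about where the difficulty lies; your parenthetical description of Liu's counterexample (two discs glued along an annulus) is also not Liu's construction.
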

Note that in (4), we only need acyclicity of $\mathcal{O}_X$ instead of universal acyclicity as explained in \cite{MP21}. 

For the definition of rig-holomorphically separability, we refer to \cite[Definition~1.5]{MP21}.

\begin{theorem}[{\cite[Corollary~1.16]{MP21}}]\label{thm:finiteLiu}
Let $f:Y\rightarrow X$ be a finite morphism of $k$-analytic spaces. Then 
\begin{enumerate}
	\item If $X$ is Liu, then so is $Y$.
	\item If $Y$ is Liu and $f$ is surjective, then $X$ is Liu.
\end{enumerate}
\end{theorem}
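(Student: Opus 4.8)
\emph{Strategy.} The plan is to lean on the cohomological characterisation of \cref{thm:Liuchar}: for a separated, quasi-compact $k$-analytic space, being Liu is the same as being universally cohomologically Stein. So in each part it suffices to check separatedness, quasi-compactness, and the vanishing $H^{>0}(\,\cdot_{k'},\mathcal{F})=0$ for every analytic extension $k'/k$ and every coherent sheaf $\mathcal{F}$.

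\emph{Part (1).} A finite morphism is proper and separated, so $f$ is compact and separated; as $X$ is quasi-compact and separated over $k$, so is $Y$. For the cohomology, fix $k'/k$; the base change $f_{k'}\colon Y_{k'}\to X_{k'}$ is again finite, so for coherent $\mathcal{F}$ on $Y_{k'}$ the pushforward $f_{k'*}\mathcal{F}$ is coherent and $R^{q}f_{k'*}\mathcal{F}=0$ for $q>0$ (locally $f_{k'}$ looks like $\Sp B\to\Sp A$ with $B$ a finite Banach $A$-module, and coherent sheaves on affinoid spaces are acyclic). By the Leray spectral sequence $H^{p}(Y_{k'},\mathcal{F})\cong H^{p}(X_{k'},f_{k'*}\mathcal{F})$, which vanishes for $p>0$ since $X$ is Liu, hence universally cohomologically Stein. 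Thus $Y$ is universally cohomologically Stein, i.e.\ Liu.

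\emph{Part (2), the easy points.} Here $X=f(Y)$ is quasi-compact. For separatedness, $f\times f\colon Y\times_{k}Y\to X\times_{k}X$ is finite, hence a closed surjection, and $(f\times f)^{-1}(\Delta_{X}(X))$ is the image of the natural map $Y\times_{X}Y\to Y\times_{k}Y$; this last map is proper, because composing it with the separated projection $Y\times_{k}Y\to Y$ yields the finite morphism $Y\times_{X}Y\to Y$. Hence that image is closed, and descending closedness along the closed surjection $f\times f$ shows $\Delta_{X}(X)$ is closed in $X\times_{k}X$; together with the standard scheme-theoretic refinement this gives that $X$ is separated. It then remains to prove that $X_{k'}$ is cohomologically Stein for every analytic extension $k'/k$.

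\emph{Part (2), the main obstacle.} This last step is the analytic incarnation of Chevalley's theorem (compare \cite[II, Th\'eor\`eme~6.7.1]{EGAII}), and is where the real work lies. I would run a d\'evissage: first reduce to $X$ reduced; then, for coherent $\mathcal{G}$ on $X_{k'}$, use the canonical map $\mathcal{G}\to\mathcal{G}\otimes_{\mathcal{O}_{X_{k'}}}f_{k'*}\mathcal{O}_{Y_{k'}}$, whose target has vanishing higher cohomology by Part~(1) applied to $Y_{k'}$ (being an $f_{k'*}\mathcal{O}_{Y_{k'}}$-module, it is the pushforward of a coherent sheaf on $Y_{k'}$ and $R^{>0}f_{k'*}=0$); and finally control the kernel and cokernel of this map by an induction over the Zariski-closed analytic subspaces of the quasi-compact space $X_{k'}$, reducing at each stage to the case where $\mathcal{O}_{X_{k'}}\hookrightarrow f_{k'*}\mathcal{O}_{Y_{k'}}$ is injective. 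Carrying out this induction in the Berkovich setting — replacing Noetherian induction on supports by a stratification argument and verifying the injectivity reduction — is the only non-formal input; alternatively one simply invokes \cite[Corollary~1.16]{MP21} and \cite[Th\'eor\`eme~2]{Liu90}.
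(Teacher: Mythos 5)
This statement is not proved in the paper at all: it is imported verbatim from \cite{MP21} (Corollary~1.16 there), so there is no internal argument to compare against. Your Part~(1) is correct and is the standard argument: a finite morphism is proper and separated, $f_{k'*}$ preserves coherence and kills higher direct images, and Leray plus \cref{thm:Liuchar} (Liu $=$ universally cohomologically Stein for quasi-compact separated spaces) finishes it. This is fine as written.

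Part~(2), however, has a genuine gap. The whole content of the statement is the non-Archimedean analogue of Chevalley's theorem, namely that cohomological vanishing descends along a finite surjection, and your proposal does not actually establish it: the d\'evissage you describe (reduce to $X$ reduced, compare $\mathcal{G}$ with $\mathcal{G}\otimes_{\mathcal{O}_{X_{k'}}}f_{k'*}\mathcal{O}_{Y_{k'}}$, induct on Zariski-closed subspaces) is only the outline of the scheme-theoretic proof, and the steps you flag as ``the only non-formal input'' --- the Noetherian-type induction on supports of coherent sheaves on a quasi-compact Berkovich space, the reduction to the case where $\mathcal{O}_{X_{k'}}\to f_{k'*}\mathcal{O}_{Y_{k'}}$ is injective, and the identification of coherent $f_{k'*}\mathcal{O}_{Y_{k'}}$-modules with pushforwards of coherent sheaves on $Y_{k'}$ --- are precisely where the work lies and are left unverified. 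The fallback ``invoke \cite[Corollary~1.16]{MP21}'' is circular, since that corollary \emph{is} the statement being proved. A secondary but real issue is the separatedness of $X$: you show the image of $\Delta_X$ is closed and then appeal to ``the standard scheme-theoretic refinement,'' but in the Berkovich category separatedness means the diagonal is a closed immersion, and passing from closed image to closed immersion requires knowing that the diagonal is an immersion (or an independent argument); this is not automatic and should be justified or referenced. As it stands, the honest conclusion of your write-up for Part~(2) is a reduction of the problem to the external result, which is exactly what the paper does by citing \cite{MP21}.
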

\begin{theorem}[{\cite[Corollary~1.15, Corollary~1.17]{MP21}}]
Let $X$ be a $k$-analytic space. Then
\begin{enumerate}
	\item For any analytic extension $k'/k$, $X_{k'}$ is Liu if{f} $X$ is Liu.
	\item Assume that $X$ is separated. Then $X$ is Liu if{f} $X^{\Red}$ is. 
	\item Assume that $X$ is separated. Then $X$ is Liu if{f} each irreducible component of $X$ is. 
\end{enumerate}
\end{theorem}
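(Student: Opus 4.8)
The plan is to derive all three parts from the cohomological characterisation of Liu spaces in \cref{thm:Liuchar} (among quasi-compact separated $k$-analytic spaces, being Liu is equivalent to being universally cohomologically Stein) together with the behaviour of finite morphisms in \cref{thm:finiteLiu}.

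For (1), the implication ``$X$ Liu $\Rightarrow X_{k'}$ Liu'' is almost formal: the base change of the quasi-compact separated space $X$ to $k'$ is again quasi-compact and separated, and every analytic extension $k''/k'$ is also an analytic extension of $k$, so $X_{k''}=(X_{k'})_{k''}$ is cohomologically Stein because $X$ is universally cohomologically Stein; thus $X_{k'}$ is universally cohomologically Stein over $k'$, hence Liu by \cref{thm:Liuchar}. For the converse, assume $X_{k'}$ is Liu. Quasi-compactness and separatedness of $X$ descend along the faithfully flat morphism $X_{k'}\to X$, so it remains to show that $X$ is universally cohomologically Stein. Fix an arbitrary analytic extension $k''/k$. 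Since $k'\hat{\otimes}_k k''\neq 0$, we may choose $x\in\Sp(k'\hat{\otimes}_k k'')$ and set $k''':=\mathcal{H}(x)$; this is an analytic extension of $k$ containing both $k'$ and $k''$ isometrically. As $X_{k'}$ is Liu it is universally cohomologically Stein over $k'$, so $X_{k'''}=(X_{k'})_{k'''}$ is cohomologically Stein; writing also $X_{k'''}=(X_{k''})_{k'''}$ and invoking faithfully flat descent of coherent cohomology along $k'''/k''$ (compute via a finite affinoid covering and its \v{C}ech complex and use exactness of $-\hat{\otimes}_{k''}k'''$), we conclude that $X_{k''}$ is cohomologically Stein. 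Hence $X$ is universally cohomologically Stein, so Liu.

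Parts (2) and (3) are then formal consequences of \cref{thm:finiteLiu}. The reduction morphism $\iota\colon X^{\Red}\to X$ is a closed immersion, hence finite, and is a homeomorphism on underlying spaces, hence surjective; since $X$ is separated, so is the closed subspace $X^{\Red}$. If $X$ is Liu then $X^{\Red}$, being finite over the Liu space $X$, is Liu by \cref{thm:finiteLiu}(1); conversely, if $X^{\Red}$ is Liu then $X$ is Liu by \cref{thm:finiteLiu}(2) applied to $\iota$. This gives (2). For (3): using (2) we may assume $X$ reduced (the irreducible components of $X$ and of $X^{\Red}$ coincide as reduced closed subspaces), and we may also assume $X$ quasi-compact (automatic when $X$ is Liu, and needed for the equivalence), so that $X$ has finitely many irreducible components $X_1,\dots,X_n$, each a closed subspace of $X$ — hence finite over $X$ — with $X=\bigcup_{i=1}^n X_i$. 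If $X$ is Liu, each $X_i$ is Liu by \cref{thm:finiteLiu}(1). Conversely, suppose each $X_i$ is Liu. Then $Y:=\coprod_{i=1}^n X_i$ is Liu: it is quasi-compact and separated; it is holomorphically separable, since two points in distinct components are separated by the corresponding idempotent of $H^0(Y,\mathcal{O}_Y)=\prod_i H^0(X_i,\mathcal{O}_{X_i})$ while two points in a common $X_i$ are separated by a function on $X_i$; and for every analytic extension $k'/k$ and every coherent sheaf $\mathcal{F}$ on $Y_{k'}$ one has $H^j(Y_{k'},\mathcal{F})=\prod_i H^j(X_{i,k'},\mathcal{F}|_{X_{i,k'}})=0$ for $j>0$. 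The natural morphism $Y\to X$ is finite (a finite disjoint sum of the finite closed immersions $X_i\hookrightarrow X$) and surjective, so $X$ is Liu by \cref{thm:finiteLiu}(2).

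The only substantial step is the backward direction of (1), namely the descent, along an analytic field extension $k'/k$, of quasi-compactness, separatedness, and — most importantly — of being (universally) cohomologically Stein. The cohomological descent rests on the faithful flatness of analytic field extensions in $\BanCat_k$ and on base change for coherent cohomology of quasi-compact separated $k$-analytic spaces (computed by a finite affinoid \v{C}ech complex); the passage from the single extension $k'$ to arbitrary $k''$ additionally uses that any two analytic extensions of $k$ admit a common analytic over-extension. Once these facts are in place, Parts (2) and (3) follow formally from \cref{thm:finiteLiu} by identifying the relevant finite surjective morphisms.
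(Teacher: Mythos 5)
Your argument takes a genuinely different route from the paper. The paper's proof of this statement is essentially a citation: it invokes \cite[Corollary~1.15, Corollary~1.17]{MP21} and adds a single remark to part (1), namely that $X$ is separated if and only if $X_{k'}$ is, which it justifies by \cite[Theorem~1.2]{CT19}. You instead reprove the corollaries from \cref{thm:Liuchar} and \cref{thm:finiteLiu}. Your parts (2) and (3) are correct and pleasantly formal: the reduction and the irreducible components (with reduced structure) are closed, hence finite, subspaces, and the finite surjective morphisms $X^{\Red}\rightarrow X$ and $\coprod_i X_i\rightarrow X$ let you apply both directions of \cref{thm:finiteLiu}; your acknowledgement that (3) needs $X$ quasi-compact (so that there are only finitely many components) is the right caveat, since an infinite disjoint union of Liu spaces is not Liu. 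The cohomological descent in (1) via a common over-extension $k'''$ of $k'$ and $k''$, flat base change of the finite affinoid \v{C}ech complex, and faithful flatness of $k'''/k''$ is also the standard and correct argument. What your reproof buys is self-containedness relative to \cref{thm:Liuchar} and \cref{thm:finiteLiu}; what the paper's citation buys is brevity and an explicit isolation of the one non-formal ingredient.

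That ingredient is exactly where your write-up has a gap: in the backward direction of (1) you assert that quasi-compactness and separatedness of $X$ ``descend along the faithfully flat morphism $X_{k'}\rightarrow X$''. Quasi-compactness does descend trivially, since $X_{k'}\rightarrow X$ is a continuous surjection. But descent of separatedness under an analytic extension of the ground field is \emph{not} formal faithfully flat descent in the Berkovich category; it is a genuine theorem of Conrad--Temkin, \cite[Theorem~1.2]{CT19}, and it is precisely the remark the paper adds to its citation of \cite{MP21}. As written, this step of your proof is unjustified; with that citation (or an actual argument) inserted, the proof is complete.
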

\begin{proof}
We only have to make the following remark to (1): $X$ is separated if{f} $X_{k'}$ is. This follows from \cite[Theorem~1.2]{CT19}.
\end{proof}

\begin{proposition}\label{prop:LiuCart}
Let $f:Y\rightarrow X$, $g:X'\rightarrow X$ be morphisms in $\LiuCat_k$. Then $Y':=Y\times_X X'\in \LiuCat_k$.
\end{proposition}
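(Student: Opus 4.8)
The plan is to handle the topological conditions directly, reduce the cohomological content to products over $k$ via \cref{thm:Liuchar}, and then treat a product $Y\times_k X'$ by bootstrapping from polydiscs. Since $X$ is separated, $\Delta_X\colon X\to X\times_k X$ is a closed immersion and $Y'=(Y\times_k X')\times_{X\times_k X}X$ is its base change, so $Y'\hookrightarrow Y\times_k X'$ is a closed immersion; as $Y\times_k X'$ is quasi-compact and separated, so is $Y'$. By \cref{thm:Liuchar} it then suffices to show $Y'$ is universally cohomologically Stein, and since $Y'_{k'}=Y_{k'}\times_{X_{k'}}X'_{k'}$ with the three factors again Liu (\cite[Corollary~1.15]{MP21}; $X_{k'}$ still separated by \cite[Theorem~1.2]{CT19}), it is enough to prove that the fiber product of Liu spaces over a separated base is cohomologically Stein. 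Using the closed immersion $Y'\hookrightarrow Y\times_k X'$ and \cref{thm:finiteLiu}(1) (closed immersions are finite), everything reduces to showing: \emph{if $Y$ and $X'$ are Liu, then so is $Y\times_k X'$.}

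First take $\mathbb D=\Sp k\langle\rho^{-1}T_1,\dots,T_n\rangle$ a closed polydisc. The projection $q\colon Y\times_k\mathbb D\to Y$ sends an affinoid domain $U\subseteq Y$ to $q^{-1}U=U\times_k\mathbb D$, which is affinoid; hence $R^{>0}q_*$ vanishes on coherent sheaves and $H^\bullet(Y\times_k\mathbb D,\mathcal O)=H^\bullet(Y,q_*\mathcal O)$, with $q_*\mathcal O$ the sheaf $U\mapsto\mathcal O(U)\hat\otimes_k k\langle\rho^{-1}T\rangle$. Picking a finite affinoid cover of $Y$ with affinoid intersections, the \v Cech complex for $q_*\mathcal O$ is $C^\bullet_Y\hat\otimes_k k\langle\rho^{-1}T\rangle$, where $C^\bullet_Y$ computes $H^\bullet(Y,\mathcal O_Y)$ and, by the finiteness theory and because $Y$ is Liu, is admissibly exact in positive degrees with $H^0=H^0(Y,\mathcal O_Y)$. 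As $k\langle\rho^{-1}T\rangle$ is flat in $\BanCat_k$ (orthonormalizable, cf.\ \cref{prop:flatkr}), the tensored complex stays admissibly exact, so $\mathcal O_{Y\times_k\mathbb D}$ is acyclic with $H^0=H^0(Y,\mathcal O_Y)\hat\otimes_k k\langle\rho^{-1}T\rangle$. Running this over every $k'/k$ gives universal acyclicity, and with holomorphic separability (below) \cref{thm:Liuchar} shows $Y\times_k\mathbb D$ is Liu. Since any $k$-affinoid $\Sp B$ is a closed subspace of some such $\mathbb D$, the closed immersion $Y\times_k\Sp B\hookrightarrow Y\times_k\mathbb D$ together with \cref{thm:finiteLiu}(1) gives that $Y\times_k\Sp B$ is Liu.

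Now let $X'$ be an arbitrary Liu space, covered by finitely many affinoid domains $\{V_a\}$ with affinoid intersections. By the previous step all the $Y\times_k V_A$ are Liu, hence cohomologically Stein, so $\{Y\times_k V_a\}$ is a \v Cech/Leray cover of $Y\times_k X'$, the projection $p\colon Y\times_k X'\to X'$ satisfies $R^{>0}p_*\mathcal F=0$, and $H^\bullet(Y\times_k X',\mathcal F)=H^\bullet(X',p_*\mathcal F)$. For $\mathcal F=\mathcal O$ I would first identify $p_*\mathcal O$ with the sheaf $\mathcal O_{X'}\hat\otimes_k A$, $A:=H^0(Y,\mathcal O_Y)$: for $V$ affinoid, $Y\times_k V$ is Liu, the ideal cutting it out of $Y\times_k\mathbb D$ is generated by the (finitely many, Tate algebras being noetherian) generators of the defining ideal of $V$ in $\mathbb D$, and taking global sections over the Liu space $Y\times_k\mathbb D$ converts the presentation of $\mathcal O(V)$ into one of $H^0(Y\times_k V,\mathcal O)$ over $A\hat\otimes_k k\langle\rho^{-1}T\rangle$, giving $H^0(Y\times_k V,\mathcal O)=A\hat\otimes_k\mathcal O(V)$. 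Then, computing via the \v Cech complex $C^\bullet_{X'}\hat\otimes_k A$ (a Leray cover, the $V_A$ being affinoid), the desired vanishing $H^{>0}(X',p_*\mathcal O)=0$ amounts to the fact that $-\hat\otimes_k A$ preserves the admissible exactness of $C^\bullet_{X'}$ in positive degrees. \emph{This is the heart of the matter}: it cannot follow from the local statement alone, since cohomological triviality is not G-local (difficulty~(2) of the introduction), and I expect it to rest on a flatness — or, via derived flat base change in the sense of \cite{BBK17}, a transversality in the sense of \cref{def:trans} — property of the Liu algebra $A$ over $k$, the analogue for Liu algebras of the good Banach-module behaviour of affinoid algebras; establishing it is the main obstacle. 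Granting it, $\mathcal O_{Y\times_k X'}$ is universally acyclic, and with holomorphic separability \cref{thm:Liuchar} gives that $Y\times_k X'$ is Liu, completing the reduction and hence the proof.

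It remains to address holomorphic separability of the products above. Two points of $Y\times_k X'$ with distinct images under one of the two projections are separated by the pullback of a function from $Y$ or from $X'$; for the remaining points, which have the same image in both factors and hence lie in a single fibre $\Sp(\mathcal H(y)\hat\otimes_k\mathcal H(x'))$, I would invoke the stability of holomorphic separability under products of $k$-analytic spaces, which belongs to the basic theory in \cite{MP21} and \cite{Liu90}; in the polydisc case one may alternatively descend it along the closed immersion $Y\times_k V\hookrightarrow Y\times_k\mathbb D$. This is the routine part of the argument.
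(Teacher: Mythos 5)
Your reduction is exactly the paper's: use separatedness of $X$ to write $Y'$ as a closed analytic subspace of $Y\times_k X'$ via the Cartesian square over $\Delta_X$, then invoke \cref{thm:finiteLiu}(1) to reduce everything to showing that a product of two Liu spaces is Liu. The difference is what happens next: the paper disposes of the product case in one line by citing \cite[Theorem~A.6]{MP21}, whereas you attempt to prove it from scratch. Your polydisc step is sound (flatness of the free Banach module $k\langle\rho^{-1}T\rangle$, as in \cref{prop:flatkr}, does carry the admissibly exact \v{C}ech complex of $Y$ across the completed tensor product), and the passage from the polydisc to an affinoid factor via a closed immersion is fine.

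However, the proof as written has a genuine, and self-acknowledged, gap at precisely the nontrivial point. For a general Liu factor $X'$ you need $-\hat\otimes_k A$ (with $A=H^0(Y,\mathcal O_Y)$ a Liu $k$-algebra) to preserve the admissible exactness of the \v{C}ech complex of $X'$ — equivalently, a flatness/transversality statement in the sense of \cref{def:trans} for $A$ over $k$ — and you explicitly write ``Granting it'' rather than proving it. Since, as you yourself note, cohomological triviality is not G-local, this cannot be waved through from the affinoid case; it is exactly the content of \cite[Theorem~A.6]{MP21} (universal acyclicity of the structure sheaf of a product of Liu spaces), which is what the paper's proof leans on. The holomorphic separability of $Y\times_k X'$ is likewise deferred to an unspecified ``stability under products'' from \cite{MP21} and \cite{Liu90}; separating two points lying in the same fibre $\Sp(\mathcal H(y)\hat\otimes_k\mathcal H(x'))$ by \emph{global} functions on the product is not automatic and also belongs to the cited result. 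So either supply a proof of the product statement (in effect reproving \cite[Theorem~A.6]{MP21}) or, as the paper does, cite it; as it stands the argument is incomplete at its central step.
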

\begin{proof}
We have the following Cartesian diagram
\[
\begin{tikzcd}
Y' \arrow[d, "{(f,g)}", swap] \arrow[rd, phantom, "\square"] \arrow[r] & Y\times X' \arrow[d,"f\times g"] \\
X \arrow[r, "\Delta_X"]                & X\times X               
\end{tikzcd}\,.
\]
As $X$ is separated, $\Delta_X$ is a closed immersion, so is the morphism $Y'\rightarrow Y\times X'$. By \cref{thm:finiteLiu}, in order to show that $Y'$ is Liu, it suffices to show that $Y\times X'$ is Liu. This follows from \cite[Theorem~A.6]{MP21}.
\end{proof}

\begin{corollary}\label{cor:intLiu}
Let $X$ be a separated $k$-analytic space. Let $Y_1,Y_2$ be Liu domains in $X$, then $Y_1\cap Y_2$ is also a Liu domain.
\end{corollary}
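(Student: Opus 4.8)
The plan is to identify $Y_1\cap Y_2$ with the fibre product $Y_1\times_X Y_2$ and then to run the argument of \cref{prop:LiuCart} almost verbatim, the point being that only the separatedness of $X$ (and not that $X$ is itself Liu) is actually used.

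First I would record that $Y_1\cap Y_2$, i.e.\ the fibre product $Y_1\times_X Y_2$ along the two analytic-domain embeddings, is again an analytic domain of $X$. This is a standard fact about Berkovich analytic domains, and since $X$ is separated one sees it concretely: if $V_1\subseteq Y_1$ and $V_2\subseteq Y_2$ are affinoid domains of $X$, then $V_1\cap V_2=V_1\times_X V_2$ is a closed analytic subspace of the affinoid $V_1\times_k V_2$ (because $\Delta_X$ is a closed immersion), hence affinoid, and such $V_1\cap V_2$ form a quasi-net on $Y_1\cap Y_2$. It therefore remains to check that $Y_1\cap Y_2$ is a Liu $k$-analytic space.

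For this, since $X$ is separated the diagonal $\Delta_X\colon X\rightarrow X\times X$ is a closed immersion, so in the Cartesian square
\[
\begin{tikzcd}
Y_1\cap Y_2 \arrow[r] \arrow[d] \arrow[rd, phantom, "\square"] & Y_1\times Y_2 \arrow[d] \\
X \arrow[r, "\Delta_X"] & X\times X
\end{tikzcd}
\]
the top arrow $Y_1\cap Y_2\rightarrow Y_1\times Y_2$ is a closed immersion, in particular a finite morphism. By \cref{thm:finiteLiu}(1) it thus suffices to prove that $Y_1\times Y_2$ is Liu; but $Y_1$ and $Y_2$, being Liu domains, are Liu $k$-analytic spaces, so $Y_1\times Y_2$ is Liu by \cite[Theorem~A.6]{MP21}. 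Note that the conclusion of \cref{thm:finiteLiu}(1) already packages the quasi-compactness and separatedness of $Y_1\cap Y_2$, so nothing further is needed there.

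The only non-formal ingredient is the first step, namely that $Y_1\cap Y_2$ really is an analytic domain of $X$; everything after that is a direct transcription of the proof of \cref{prop:LiuCart} with the hypothesis ``$X$ Liu'' weakened to ``$X$ separated'', and I do not anticipate any genuine obstacle.
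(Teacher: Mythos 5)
Your argument is correct and is essentially the intended one: the paper states \cref{cor:intLiu} without proof as an immediate consequence of \cref{prop:LiuCart}, whose proof (identifying $Y_1\cap Y_2$ with $Y_1\times_X Y_2$, using that $\Delta_X$ is a closed immersion, \cref{thm:finiteLiu}, and \cite[Theorem~A.6]{MP21}) uses only the separatedness of $X$, exactly as you observe.
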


\subsection{Liu algebras}

\begin{definition}\label{def:Liualge}
A \emph{Liu $k$-algebra} is a  Banach $k$-algebra $A$ such that there is a Liu $k$-analytic space such that $A\cong H^0(X,\mathcal{O}_X)$, where the isomorphism is an isomorphism of Banach $k$-algebras. A Liu $k$-algebra is said to be \emph{strict} if there is a strict Liu $k$-analytic space with $A\cong H^0(X,\mathcal{O}_X)$ in $\BanAlgCat_k$.

A morphism of Liu $k$-algebras is a bounded homomorphism of the underlying Banach $k$-algebras.

The category of Liu $k$-algebras is denoted by $\LiuAlgCat_k$. It is a full subcategory of $\BanAlgCat_k$.
\end{definition}

\begin{proposition}\label{prop:Liualgbasic}
Let $A$ be a Liu $k$-algebra. Then 
\begin{enumerate}
	\item $A$ is Noetherian and all of its ideals are closed.
	\item Suppose that $k$ is non-trivially valued and $A$ is strict. For any maximal ideal $\mathfrak{m}$ of $A$, $A/\mathfrak{m}$ is finite dimensional as a vector space over $k$.
	\item We have
	\[
		\bigcap_{\mathfrak{m}\in \Max(A)}\bigcap_{n=1}^{\infty} \mathfrak{m}^n=0\,.
	\]
\end{enumerate}
\end{proposition}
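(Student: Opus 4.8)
The plan is to prove the three assertions by transporting the corresponding scheme-theoretic facts through the functor of global sections, exploiting the characterization of Liu spaces and properties inherited from affinoid algebras. Throughout, fix a Liu $k$-analytic space $X$ with $A\cong H^0(X,\mathcal{O}_X)$.

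For (1), the key input is that $X$ admits a finite covering by $k$-affinoid domains, say $X=\bigcup_{i=1}^m V_i$ with $A_i:=H^0(V_i,\mathcal{O}_X)$ a $k$-affinoid algebra, hence Noetherian. Since $X$ is Liu (in particular cohomologically Stein), the \v{C}ech complex computes sheaf cohomology and one gets an exact sequence $0\to A\to \prod_i A_i\to \prod_{i,j}A_{ij}$; moreover higher cohomology vanishes. I would like to conclude that $A$ is Noetherian from the fact that it is a "global sections of a proper-ish gluing of Noetherian affinoids", but this is not automatic. A cleaner route: Liu's original results (or \cite{MP21}) should give that $\Spec A$ is Noetherian, or more directly that the category of coherent sheaves on $X$ is equivalent to the category of finite $A$-modules via $\mathcal{F}\mapsto H^0(X,\mathcal{F})$ — this equivalence is essentially the content of cohomological Steinness plus holomorphic separability, analogous to Serre's theorem on affine schemes. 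Granting that equivalence, $A$ is Noetherian because ascending chains of ideals correspond to ascending chains of coherent subsheaves of $\mathcal{O}_X$, which stabilize locally on the finite affinoid cover, hence globally. Closedness of ideals then follows: a finitely generated ideal $I\subseteq A$ is the image of a map of coherent sheaves, hence corresponds to a coherent subsheaf, which has closed sections in each $A_i$ (finite modules over affinoid algebras are Banach with closed submodules), and closedness glues along the equalizer presentation of $A$.

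For (2), assume $k$ non-trivially valued and $A$ strict, so $X$ may be taken strict. Let $\mathfrak{m}\subset A$ be maximal. Under the coherent-sheaf equivalence, $A/\mathfrak{m}$ corresponds to a nonzero coherent sheaf supported on a closed analytic subspace $Z\hookrightarrow X$ with $H^0(Z,\mathcal{O}_Z)=A/\mathfrak{m}$ a field. Since $X$ is strict and quasi-compact, $Z$ is a strict closed subspace, hence has finitely many points of the underlying affinoid cover meeting it; restricting to one affinoid $V_i$ meeting $Z$, $A/\mathfrak{m}$ surjects onto $H^0(Z\cap V_i,\mathcal{O})$, a nonzero quotient of a strict affinoid algebra. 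The subtlety is to see $A/\mathfrak{m}$ is itself finite over $k$: one shows $Z$ is a single rigid point. This follows because $A/\mathfrak{m}$ is a field that is also a Liu $k$-algebra (being $H^0$ of the Liu space $Z$, using \cref{thm:finiteLiu}(1)? — no, rather $Z$ closed in Liu is Liu), and a Liu $k$-algebra which is a field must be a finite extension of $k$: its Berkovich spectrum $Z$ has a unique point (holomorphic separability forces this for a field), and a strict affinoid domain around that point is $\Sp$ of an affinoid algebra that is a field, hence finite over $k$ by the strict affinoid Nullstellensatz.

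For (3), I would argue that the intersection $J:=\bigcap_{\mathfrak{m}}\bigcap_n \mathfrak{m}^n$ is a closed ideal by (1), hence corresponds to a coherent subsheaf $\mathcal{J}\subseteq\mathcal{O}_X$; it suffices to show $\mathcal{J}=0$, which can be checked on the finite affinoid cover. On each $V_i$, the ideal $J A_i$ is contained in $\bigcap_{\mathfrak{n}\in\Max A_i}\bigcap_n \mathfrak{n}^n$, because maximal ideals of $A_i$ contract to maximal ideals of $A$ (using that $A\to A_i$ corresponds to a domain inclusion and, in the affinoid case, Krull-type behavior — this is the step I expect to require care) and powers are preserved. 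For affinoid algebras the Krull intersection theorem gives $\bigcap_{\mathfrak{n}}\bigcap_n \mathfrak{n}^n=0$ since $A_i$ is Noetherian and Jacobson with the relevant separation property (the topology is the $\mathfrak{n}$-adic one refined over all $\mathfrak{n}$). Hence $JA_i=0$ for all $i$, so $\mathcal{J}=0$ and $J=0$. The main obstacle in the whole proposal is establishing — or locating in \cite{MP21} — the coherent-sheaf/finite-module equivalence and the behavior of maximal ideals under restriction to affinoid domains; once those are in hand, (1)–(3) are formal consequences of the corresponding affinoid and scheme-theoretic statements.
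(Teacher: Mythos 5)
Your part (1) is workable and in fact close to the actual source: the coherent-sheaf/finite-module equivalence you want to invoke is \cite[Proposition~2.6]{MP21} (\cref{thm:cohLiu} in the paper), and the paper's own proof of Noetherianity is just a citation of that result; for closedness of ideals the paper instead quotes \cite[Proposition~3.7.2.2]{BGR84} in the non-trivially valued case and a ground field extension argument (\cite[Proposition~2.1.3]{Berk12}) in general, so your sheaf-theoretic closedness argument is a genuinely different (and reasonable) route. One small repair: an arbitrary ideal is not a priori finitely generated, so an ascending chain of ideals does not directly give a chain of coherent subsheaves; you should generate the coherent subsheaf $\mathcal I\subseteq\mathcal O_X$ from $I$, write $\mathcal I=\widetilde J$ for a finitely generated $J\subseteq I$, and recover $I=H^0(X,\mathcal I)=J$ from the equivalence. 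The genuine gap is in (2): everything rests on your parenthetical claim that the spectrum of a (strict) Liu $k$-algebra which is a field consists of a single point ``because of holomorphic separability''. Holomorphic separability only provides, for two distinct points, a function with distinct absolute values; since a nonzero element of a field is invertible and hence nowhere vanishing, this yields no contradiction, and no soft argument of this kind is available — this is exactly where a Nullstellensatz-type input is required. The paper's proof supplies it via \cite[Proposition~1.3]{Liu90}: every maximal ideal of a strict Liu algebra equals $\mathfrak m_{\Sp A,x}$ for a rigid point $x$; one then chooses a strictly affinoid domain $\Sp B\ni x$, notes that $x$ is rigid in $\Sp B$, and embeds $A/\mathfrak m$ into the finite-dimensional $B/\mathfrak m_{\Sp B,x}$.

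In (3) there are two problems. The step you yourself flag — that maximal ideals of $A_i$ contract to maximal ideals of $A$ — fails for non-strict affinoid domains: for $A=k\{T\}$ and the domain $\{|T|=r\}$ with $r$ a $k$-free radius, the domain algebra is a field, and its maximal ideal $(0)$ contracts to the non-maximal prime $(0)$ of $A$; so the inclusion $JA_i\subseteq\bigcap_{\mathfrak n}\bigcap_n\mathfrak n^n$ is not justified as written (such $\mathfrak n$ would have to be treated separately). More to the point, the entire detour through coherent subsheaves and affinoid localization is unnecessary: once (1) is known, (3) is a purely ring-theoretic fact valid in any Noetherian ring, and this is how the paper argues. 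If $a$ lies in the intersection, then for each maximal ideal $\mathfrak m$ Krull's intersection theorem produces $m\in\mathfrak m$ with $(1-m)a=0$, so the annihilator of $a$ is contained in no maximal ideal and must be the unit ideal, whence $a=0$. You invoke Krull's theorem only on the affinoid pieces; applying it directly to $A$ eliminates both the sheaf machinery and the contraction issue.
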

\begin{proof}
(1) That $A$ is noetherian follows from \cite[Proposition~2.6(3), Remark~2.7]{MP21}. When $k$ is non-trivially valued, all ideals are closed by \cite[Proposition~3.7.2.2]{BGR84}. In general, this follows from a base field extension argument, see \cite[Proposition~2.1.3]{Berk12}.

(2) By \cite[Proposition~1.3]{Liu90}, there is a rigid point $x\in X$ such that $\mathfrak{m}=\mathfrak{m}_{\Sp A,x}$. Take a strictly affinoid domain $\Sp B$ of $\Sp A$ containing $x$. Then $x$ is also rigid in $\Sp B$. It is well-known that $B/\mathfrak{m}_{\Sp B,x}$ is finite dimensional, hence so is $A/\mathfrak{m}$.

(3) Take an element $a\in A$ that lies in the intersection of all $\mathfrak{m}^n$ for any $\mathfrak{m}\in \Max A$, $n\geq 1$. Then By Krull's intersection theorem, for each $\mathfrak{m}\in \Max A$, there is an element $m\in \mathfrak{m}$ such that $(1-m)a=0$. Thus the annihilator of $a$ does not lie in any maximal ideal of $A$, hence $a=0$.
\end{proof}

\begin{corollary}\label{cor:conttoLiu}
Let $A$ be a Liu $k$-algebra. All $k$-algebra homomorphisms from a Banach $k$-algebra to $A$ are bounded. In particular, the Liu $k$-algebra structure of $A$ is uniquely determined by the underlying algebraic structure.
\end{corollary}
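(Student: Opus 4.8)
The plan is to combine the closed graph theorem in $\BanCat_k$ with the two facts just established: that $A$ is Noetherian with all ideals closed (Proposition~\ref{prop:Liualgbasic}(1)) and that $\bigcap_{\mathfrak{m}\in\Max A}\bigcap_{n\ge 1}\mathfrak{m}^n=0$ (Proposition~\ref{prop:Liualgbasic}(3)). Fix a Banach $k$-algebra $B$ and a $k$-algebra homomorphism $\varphi\colon B\to A$. By the closed graph theorem for Banach $k$-modules (standard for $k$ non-trivially valued; the remaining case is handled by base change to a suitable $k_r$, noting that $A_r$ is again a Liu $k_r$-algebra, being the ring of global functions on the Liu $k_r$-space $X_r$), it is enough to show that the graph of $\varphi$ is closed, that is: if $b_n\to 0$ in $B$ and $\varphi(b_n)\to a$ in $A$, then $a=0$.

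By Proposition~\ref{prop:Liualgbasic}(3) it suffices to prove $a\in\mathfrak{m}^N$ for every $\mathfrak{m}\in\Max A$ and every $N\ge 1$. Fix such $\mathfrak{m},N$. Since $\mathfrak{m}^N$ is a closed ideal (Proposition~\ref{prop:Liualgbasic}(1)), $A/\mathfrak{m}^N$ is a Banach $k$-algebra and $\pi\colon A\to A/\mathfrak{m}^N$ is bounded; moreover $A/\mathfrak{m}^N$ is itself a Liu $k$-algebra — fixing a Liu space $X$ with $A\cong H^0(X,\mathcal{O}_X)$, the finitely generated ideal $\mathfrak{m}^N$ defines a coherent sheaf of ideals $\widetilde{\mathfrak{m}^N}$, the associated closed analytic subspace is Liu by Theorem~\ref{thm:finiteLiu}(1), and $H^1(X,\widetilde{\mathfrak{m}^N})=0$ because coherent sheaves on a Liu space are acyclic, so the ring of functions of that subspace is $A/\mathfrak{m}^N$. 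Thus $\pi\circ\varphi$ is a $k$-algebra homomorphism from the Banach $k$-algebra $B$ into the \emph{Artinian} Liu $k$-algebra $A/\mathfrak{m}^N$; if every such homomorphism is bounded, then $\pi(\varphi(b_n))\to 0$, hence $\pi(a)=0$, i.e. $a\in\mathfrak{m}^N$, and we are done. So the whole statement reduces to the case of an Artinian target.

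In the Artinian case one decomposes into local factors and assumes $E:=A/\mathfrak{m}^N$ is local Artinian with maximal ideal $\mathfrak{n}$ (so $\mathfrak{n}^N=0$) and residue field $F=E/\mathfrak{n}$, then proceeds up the filtration $E\supseteq\mathfrak{n}\supseteq\cdots\supseteq\mathfrak{n}^N=0$. The base case is a $k$-algebra homomorphism $B\to F$ into the complete valued field $F$: one extends it to an $F$-algebra character of the Banach $F$-algebra $B\,\hat{\otimes}_k F$ and invokes that characters of Banach algebras are bounded (if $\|b\|<1$ then $\lambda\cdot 1-b$ is invertible in $B$ for every $\lambda\in k$ with $|\lambda|>\|b\|$, forcing $|\chi(b)|\le\|b\|$); in the strict setting $F/k$ is finite by Proposition~\ref{prop:Liualgbasic}(2), which is what makes the completed tensor product unnecessary and the extension legitimate. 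The inductive step views $E/\mathfrak{n}^{j+1}\to E/\mathfrak{n}^{j}$ as a square-zero extension with kernel $\mathfrak{n}^{j}/\mathfrak{n}^{j+1}$, a finite $F$-module, and upgrades boundedness one layer at a time. The final assertion of the corollary is then formal: any two Banach $k$-algebra norms on the ring underlying $A$ each present it as a Liu $k$-algebra, so the identity is bounded in both directions by the first part, hence the norms are equivalent.

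I expect the genuine obstacle to be the Artinian step, and within it the inductive passage across the nilpotent layers, where one must bound a $\chi$-twisted point-derivation-type map $B\to F^{\,d}$: the closed graph theorem together with Proposition~\ref{prop:Liualgbasic}(1),(3) alone do not suffice here, and one must bring in the finer structure of Artinian Liu algebras (and, away from the strict setting, control homomorphisms into residue fields $F$ with $F/k$ infinite). This is where the real input from the Liu hypothesis beyond Noetherianity lives; everything preceding it is bookkeeping around the closed graph theorem.
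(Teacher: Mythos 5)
Your reduction is fine as far as it goes, but the proof stops exactly where the real content lies, and you say so yourself in the last paragraph. The closed graph theorem together with \cref{prop:Liualgbasic}(1),(3) does legitimately reduce boundedness of $\varphi\colon B\to A$ to boundedness of the induced maps $B\to A/\mathfrak{m}^N$; the paper, however, proves the corollary in the strict, non-trivially valued case simply by citing \cite[Proposition~3.7.5.2]{BGR84}, whose hypotheses (Noetherian, all ideals closed, residue fields at maximal ideals finite over $k$) are exactly what \cref{prop:Liualgbasic} supplies. In other words, you are attempting to re-derive the cited result of Bosch--G\"untzer--Remmert and you stall precisely at its core: the continuity of $k$-algebra homomorphisms into the Artinian quotients. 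Note that in the strict case there is no need to invoke any ``finer structure of Artinian Liu algebras'': by \cref{prop:Liualgbasic}(2) the residue field is finite over $k$, the layers $\mathfrak{m}^j/\mathfrak{m}^{j+1}$ are finite modules over it, so $A/\mathfrak{m}^N$ is finite-dimensional over $k$, and the missing ingredient is exactly the statement that homomorphisms from a Banach $k$-algebra into a finite-dimensional Banach $k$-algebra are continuous (\cite[Proposition~3.7.5.1]{BGR84}) --- a nontrivial lemma which you neither prove nor cite, and whose difficulty is precisely the nilpotent-layer step you flag as an open obstacle. As written, the argument therefore has an admitted gap at its central step.

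The treatment of the general (non-strict or trivially valued) case is also circular as proposed. Both of your devices --- base-changing to $k_r$ before applying the closed graph theorem, and extending the character $B\to F$ to an $F$-algebra character of $B\hat{\otimes}_k F$ --- require forming a completed tensor product of the map $\varphi$ (equivalently, boundedness of $(b,\lambda)\mapsto\varphi(b)\lambda$), which presupposes the very boundedness of $\varphi$ being proved; moreover, over a trivially valued $k$ the closed graph theorem cannot simply be invoked in its usual form. You correctly note that the character argument is only legitimate when $F/k$ is finite, i.e.\ in the strict case, but that leaves the general case unaddressed except by these circular moves. The paper disposes of it by a base change argument on $A$ itself (choosing $r$ so that $k_r$ is non-trivially valued and $A_r$ is a strict Liu $k_r$-algebra and reducing to the strict case), not by base-changing an as-yet-unbounded homomorphism. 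The final deduction --- uniqueness of the Banach structure from the first assertion, via boundedness of the identity in both directions --- is correct and matches the intended use of the statement.
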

\begin{proof}
When $k$ is non-trivially valued and $A$ is strict, this follows from \cref{prop:Liualgbasic} and \cite[Proposition~3.7.5.2]{BGR84}.

In general, this follows from the change of base argument.
\end{proof}

\begin{theorem}[Liu]\label{thm:Liu}
The functor of global sections gives an anti-equivalence $\LiuCat_k\rightarrow \LiuAlgCat_k$. The inverse functor is denoted by $\Sp A$. Moreover, for any $k$-analytic space $Y$, any Liu $k$-analytic space $X$, the canonical map
\[
	\Hom_{\AnaCat_k}(Y,X)\rightarrow \Hom_{\AlgCat_k}(H^0(X,\mathcal{O}_X),H^0(Y,\mathcal{O}_Y))
\]
is bijective.
\end{theorem}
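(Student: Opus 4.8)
The plan is to establish the anti-equivalence by first constructing the functor $\Sp$ and then proving the $\Hom$-bijection, since the latter immediately gives fully faithfulness and the construction of $\Sp$ gives essential surjectivity. So I would begin with the $\Hom$ statement, which is the heart of the matter. Given a Liu $k$-analytic space $X$ with $A = H^0(X,\mathcal{O}_X)$ and an arbitrary $k$-analytic space $Y$, I want to show
\[
\Hom_{\AnaCat_k}(Y,X) \to \Hom_{\AlgCat_k}(A, H^0(Y,\mathcal{O}_Y))
\]
is bijective. The first reduction is to the case where $Y = \Sp B$ is affinoid, using the sheaf property: a morphism $Y \to X$ is determined by its restrictions to a G-covering of $Y$ by affinoid domains, these restrictions must be compatible on overlaps, and on the algebra side $H^0(Y,\mathcal{O}_Y)$ is likewise the equalizer of the corresponding diagram of affinoid algebras; so both sides are sheaves on $Y_G$ and it suffices to check the statement on affinoid pieces. (One should be slightly careful that the overlaps of affinoid domains in $Y$ need not be affinoid, but they are at worst finite unions of affinoid domains, and the sheaf property still reduces everything to affinoids by a further covering argument.)

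Thus I am reduced to showing that for $B$ a $k$-affinoid algebra, $\Hom_{\AnaCat_k}(\Sp B, X) \to \Hom_{\AlgCat_k}(A, B)$ is bijective. This is precisely the universal property that Liu proved (\cite{Liu90}, and see \cite[Theorem~1.11 and its surroundings]{MP21}): Liu spaces are exactly those quasi-compact separated spaces representing the functor $\Sp B \mapsto \Hom_{\AlgCat_k}(A,B)$ on affinoids, equivalently the terminal object among spaces with global sections $A$. Concretely, surjectivity: given $\varphi: A \to B$, cover $X$ by affinoid domains $\{V_\ell\}$; the composites $A \to H^0(V_\ell, \mathcal{O}_X)$ combined with $\varphi$ should define, after passing to a suitable cover of $\Sp B$, compatible maps $\Sp B \supseteq W \to V_\ell$ which glue — here one uses that $A$ generates enough functions to separate points of $X$ (holomorphic separability, \cref{def:Liuspace}(2)) to pin down where each point of $\Sp B$ goes. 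Injectivity: two morphisms $f, g: \Sp B \to X$ inducing the same map on global sections agree because the functions in $A$ separate points of $X$ and generate the local rings in the relevant sense. I would cite \cite{Liu90} and \cref{thm:Liuchar} for this rather than reprove it.

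With the $\Hom$-bijection in hand, the rest is formal. Fully faithfulness of the global sections functor $\LiuCat_k \to (\LiuAlgCat_k)^{\mathrm{op}}$ is the special case $Y = X'$ another Liu space, combined with \cref{cor:conttoLiu} which ensures every $k$-algebra homomorphism between Liu algebras is automatically bounded, so that $\Hom_{\AlgCat_k} = \Hom_{\LiuAlgCat_k}$. Essential surjectivity is the definition of $\LiuAlgCat_k$ (\cref{def:Liualge}): every Liu $k$-algebra is by definition $H^0(X,\mathcal{O}_X)$ for some Liu $X$, and \cref{cor:conttoLiu} guarantees this $X$ is determined up to unique isomorphism, so $\Sp$ is well-defined as a functor and is a quasi-inverse. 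I expect the main obstacle to be the affinoid case of the $\Hom$-bijection, i.e. the gluing argument for surjectivity, which genuinely uses Liu's results on holomorphic separability and the structure of Liu spaces; everything else is bookkeeping with the sheaf property and \cref{cor:conttoLiu}. If a self-contained argument is wanted, the cleanest route is to invoke \cref{thm:Liuchar}(2): since $X$ is universally cohomologically Stein, for a coherent sheaf on $X$ pulled back along the would-be morphism one gets vanishing of higher cohomology, which is what makes the gluing data unobstructed.
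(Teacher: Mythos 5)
There is a genuine gap. Your entire argument funnels into the claim that the affinoid case of the Hom-bijection ``is precisely the universal property that Liu proved,'' but Liu's results (and the rigid--Berkovich comparison of \cite[Theorem~1.6.1]{Berk93} needed to transport them) are only available for \emph{strict} spaces over a \emph{non-trivially valued} field. \Cref{thm:Liu} is stated for arbitrary Liu $k$-analytic spaces and the paper explicitly allows the valuation on $k$ to be trivial, and this extension is exactly where the content of the paper's proof lies: one passes to $k'=k_r$ for a $k$-free polyradius so that $A_{k'}$ and $B_{k'}$ become strict Liu algebras over a non-trivially valued field, obtains from the strict case a unique morphism $g\colon Y_{k'}\to X_{k'}$ inducing $F_{k'}$, and then must \emph{descend} $g$ to a morphism $Y\to X$ over $k$. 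This descent is not formal: the paper invokes \cite[Proposition~3.13]{MP21} to embed $X$ as a locally closed subspace of an affinoid $Z$ admitting a finite rational covering $\{Z_i\}$ over which $X\to Z$ is a Runge immersion, produces a morphism $w\colon Y\to Z$ over $k$ with $w_{k'}=h_{k'}\circ g$ from the affinoid universal property, and then restricts to $w^{-1}(Z_i)$ and $h^{-1}(Z_i)$ to reduce to the case where $X$ itself is affinoid. Your proposal contains no base-change step, no descent step, and no mechanism for handling trivially valued $k$ or non-strict Liu spaces, so it establishes at best the strict, non-trivially valued case already in the literature.

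In addition, the parts you do sketch are too thin to stand on their own. The surjectivity argument in the affinoid case (``the composites \dots\ should define, after passing to a suitable cover of $\Sp B$, compatible maps which glue'') is precisely the nontrivial point of Liu's theorem; holomorphic separability alone does not produce the cover or the gluing, and invoking universal cohomological Steinness via \cref{thm:Liuchar} to make ``the gluing data unobstructed'' is not an argument, since vanishing of coherent cohomology does not by itself glue morphisms of analytic spaces. If you intend to cite rather than reprove this case, the appropriate references are \cite[Proposition~3.2]{Liu90} together with \cite[Theorem~1.6.1]{Berk93}, after which the base-change-and-descent argument above still has to be supplied. The formal bookkeeping in your final paragraph (full faithfulness from the Hom-bijection plus \cref{cor:conttoLiu}, essential surjectivity from \cref{def:Liualge}) is fine, as is the reduction of the Hom statement for general $Y$ to affinoid $Y$ by the sheaf property; this matches the paper's observation that the second statement is a formal consequence of the first, merely run in the opposite order.
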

\begin{remark}
The space $\Sp A$ as a topological space coincides with the spectrum in the sense of Berkovich \cite[Section~1.2]{Berk12}. See \cite[Corollary~3.17]{MP21} for example.
\end{remark}

\begin{proof}
The latter statement is a formal consequence of the former.

When $k$ is non-trivially valued, by \cite[Proposition~3.2]{Liu90} and \cite[Theorem~1.6.1]{Berk93}, we know that the global section functor is an anti-equivalence from the category of strict Liu $k$-analytic spaces to the category of strict Liu $k$-algebras. 

In general, let $X$, $Y$ be Liu $k$-analytic spaces. Let $A=H^0(X,\mathcal{O}_X)$, $B=H^0(Y,\mathcal{O}_Y)$. Let $F:A\rightarrow B$ be a homomorphism of $k$-algebras. We want to construct a morphism $Y\rightarrow X$, whose induced map on global sections is given by $F$. We may assume that $Y$ is affinoid.
Take an analytic field extension $k'/k$, so that $k'$ is non-trivially valued, $A_{k'}$ and $B_{k'}$ become strict Liu $k$-algebras. We may assume that $k'=k_r$ for some $k$-free polyradius.
Then there is a unique morphism $g:Y_{k'}\rightarrow X_{k'}$ inducing $F_{k'}$. We claim that there is a unique morphism $f:Y\rightarrow X$ such that $g=f_{k'}$. Note that it is automatic that $f$ induces $F$ on global sections by \cite[Proposition~2.1.2]{Berk12}.

 By \cite[Proposition~3.13]{MP21}, there is a $k$-affinoid space $Z$, a locally closed immersion $h:X\rightarrow Z$ such that there is a finite covering $Z_1,\ldots,Z_m$ of $Z$ by rational domains such that $h^{-1}(Z_i)\rightarrow Z_i$ is a Runge immersion for each $i$:
\[
	\begin{tikzcd}
	Y_{k'} \arrow[r, "g"] \arrow[d]                            & X_{k'} \arrow[r, "h_{k'}"] \arrow[d] & Z_{k'} \arrow[d] \\
	Y \arrow[rr, "w"', bend right] \arrow[r, "f", dotted] & X \arrow[r, "h"]                & Z          
	\end{tikzcd}\,.
\]
Now observe that the composition of maps on global sections
\[
H^0(Z_{k'},\mathcal{O}_{Z_{k'}})\rightarrow H^0(X_{k'},\mathcal{O}_{X_{k'}})\rightarrow H^0(Y_{k'},\mathcal{O}_{Y_{k'}})
\]
is the same as the base extension of the map of $k$-algebras
\[
H^0(Z,\mathcal{O}_Z)\rightarrow A\xrightarrow{F} B\,.
\]
Thus if we denote by $w:Y\rightarrow Z$ the morphism of $k$-analytic spaces corresponding to this latter map, we have $w_{k'}=h_{k'}\circ g$.
Replacing $Y$ by $w^{-1}(Z_i)$, $X$ by $h^{-1}Z_i$ and $Z$ by $Z_i$ and applying \cite[Proposition~1.3.2]{Berk93} and \eqref{eq:h0tensor}, we may assume that $X\rightarrow Z$ is a Runge immersion. In particular $X$ is affinoid. We can take $f$ to be the morphism corresponding to $F$. Moreover, such $f$ (such that $g=f_{k'}$) is clearly unique. We conclude.
\end{proof}

\begin{lemma}\label{lma:Liutensor}
Let $A$ be a Liu $k$-algebra. Let $B$, $C$ be Liu $k$-algebras over $A$, then $B\hat{\otimes}_A C$ is Liu. In particular, for any $k$-free polyradius $r$, $A_r$ is a Liu $k$-algebra.
\end{lemma}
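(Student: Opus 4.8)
The plan is to reduce the statement to the already-proved facts that the relative spectrum $\Sp$ gives an anti-equivalence between $\LiuCat_k$ and $\LiuAlgCat_k$ (\cref{thm:Liu}) and that the category $\LiuCat_k$ is stable under fibre products over a Liu base (\cref{prop:LiuCart}). Concretely, write $X=\Sp A$, $Y'=\Sp B$, $Y''=\Sp C$, where $Y'\to X$ and $Y''\to X$ are the morphisms of Liu $k$-analytic spaces corresponding (via \cref{thm:Liu}) to the structure morphisms $A\to B$ and $A\to C$. By \cref{prop:LiuCart}, the fibre product $W:=Y'\times_X Y''$ is again a Liu $k$-analytic space. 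Hence $H^0(W,\mathcal{O}_W)$ is by definition a Liu $k$-algebra, and the whole point is to identify this global-section algebra with $B\hat{\otimes}_A C$.

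First I would establish that $H^0(Y'\times_X Y'',\mathcal{O})\cong B\hat{\otimes}_A C$ as Banach $k$-algebras. For this I would use the universal property: since $X$, $Y'$, $Y''$ are all Liu, \cref{thm:Liu} says that for any $k$-analytic space $T$ the set $\Hom_{\AnaCat_k}(T,Y'\times_X Y'')$ is in bijection with the set of pairs of $k$-algebra homomorphisms $B\to H^0(T,\mathcal{O}_T)$, $C\to H^0(T,\mathcal{O}_T)$ agreeing on $A$; by \cref{cor:conttoLiu} one need not worry about boundedness on the source side when the target is Liu, but here I must be slightly careful, since $T$ is arbitrary. The cleanest route is: let $D:=H^0(W,\mathcal{O}_W)$, which is a Liu $k$-algebra with structure maps from $B$ and $C$ over $A$; this induces a bounded $k$-algebra map $B\hat{\otimes}_A C\to D$. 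Conversely, $W$ carries canonical morphisms to $Y'$ and to $Y''$ over $X$; applying $H^0$ and using the universal property of $\hat{\otimes}_A$ in $\BanAlgCat_k$, together with the fact that $W=\Sp D$ and \cref{thm:Liu}, one produces a morphism $\Sp(B\hat{\otimes}_A C)\to W$ inducing the inverse map. The two maps are mutually inverse because both $W$ and $\Sp(B\hat{\otimes}_A C)$ co-represent the same functor $T\mapsto \{\text{pairs of maps } B,C\to H^0(T,\mathcal O_T) \text{ over } A\}$ on $\LiuCat_k$ — and the anti-equivalence of \cref{thm:Liu} lets us conclude. Thus $B\hat{\otimes}_A C\cong D$ is a Liu $k$-algebra.

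The special case follows immediately: $k_r$ is a Liu $k$-algebra since $\Sp k_r$ is affinoid, hence Liu; taking $B=A_{\mathrm{triv}}$... more directly, $A_r=A\hat{\otimes}_k k_r=A\hat{\otimes}_A(A\hat{\otimes}_k k_r)$, or simply apply the displayed statement with $A$ replaced by $k$, $B=A$, $C=k_r$, both of which are Liu $k$-algebras, to get that $A\hat{\otimes}_k k_r=A_r$ is Liu.

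The step I expect to be the main obstacle is the identification $H^0(Y'\times_X Y'',\mathcal{O})\cong B\hat{\otimes}_A C$, and in particular checking that the natural comparison map is an \emph{isometric} (or at least topological) isomorphism of Banach algebras, not merely an isomorphism of abstract $k$-algebras. On the underlying-ring level it is forced by \cref{thm:Liu} and \cref{cor:conttoLiu} (the Liu algebra structure is determined by the ring structure), so the remaining content is purely about the norm. Here I would invoke the explicit description of global sections on affinoid charts: the fibre product $W$ admits a G-covering by affinoid domains of the form $\Sp(B'\hat{\otimes}_{A'}C')$ for affinoid subdomains $\Sp A'\subset X$, $\Sp B'\subset Y'$, $\Sp C'\subset Y''$ lying over one another, and one computes $H^0(W,\mathcal O_W)$ as the equalizer over this covering; comparing with the analogous Čech description of $B\hat\otimes_A C$ via the flatness/acyclicity properties of Liu algebras recorded in \cref{prop:Liualgbasic} and \cref{thm:Liuchar}, one gets the topological isomorphism. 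Since \cref{cor:conttoLiu} guarantees the Banach structure on a Liu algebra is intrinsic, any $k$-algebra isomorphism $B\hat{\otimes}_A C\xrightarrow{\sim} H^0(W,\mathcal O_W)$ is automatically an isomorphism of Banach $k$-algebras, so in fact it suffices to match the rings — which sidesteps the norm bookkeeping entirely.
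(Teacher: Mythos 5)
Your overall frame agrees with the paper's: both reduce, via \cref{prop:LiuCart}, to identifying $H^0(Z,\mathcal{O}_Z)$ with $B\hat{\otimes}_A C$ for $Z=\Sp B\times_{\Sp A}\Sp C$. But your proof of that identification does not close, and this identification is the entire content of the lemma. The universal property only produces the canonical bounded map $\phi:B\hat{\otimes}_A C\rightarrow D:=H^0(Z,\mathcal{O}_Z)$, and \cref{thm:Liu} lets you test morphisms only against Liu targets: for every Liu $k$-algebra $E$, composition with $\phi$ gives a bijection $\Hom(D,E)\rightarrow\Hom(B\hat{\otimes}_A C,E)$. That exhibits $D$ as the universal Liu algebra \emph{under} $B\hat{\otimes}_A C$ (a reflection into $\LiuAlgCat_k$), not as isomorphic to it; the Yoneda step fails because $B\hat{\otimes}_A C$ is not known to lie in the subcategory on which you compare the two functors --- that is exactly the statement being proved, so the argument is circular. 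For the same reason, writing ``a morphism $\Sp(B\hat{\otimes}_A C)\rightarrow Z$'' presupposes that $B\hat{\otimes}_A C$ is realized as the global sections of an analytic space; and note that your ``converse'' construction (maps $B\rightarrow D$, $C\rightarrow D$ over $A$, then the universal property of $\hat{\otimes}_A$) again yields a map in the direction of $\phi$, not an inverse. The remark that any ring isomorphism would automatically be a Banach isomorphism by \cref{cor:conttoLiu} also needs care: that corollary bounds maps \emph{into} a Liu algebra, and the inverse map lands in $B\hat{\otimes}_A C$, whose Liu-ness is the unknown.

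Your fallback --- a \v{C}ech comparison over affinoid charts --- is where the real analytic content sits, and \cref{prop:Liualgbasic} and \cref{thm:Liuchar} do not supply it: you would need to know that $(-)\hat{\otimes}_A C$ (or $B\hat{\otimes}_A(-)$) carries the equalizer computing $H^0$ over the covering to the corresponding equalizer for $B\hat{\otimes}_A C$, i.e.\ a transversality/Tate-acyclicity input of the type of \cref{thm:liut} and \cref{thm:Tateacyc}, which in the paper are established separately (and later). The paper sidesteps all of this: it regards $Z$ as a closed analytic subspace of the \emph{absolute} product $\Sp B\times\Sp C$ cut out by the ideal generated by the diagonal ideal $J\subset A\hat{\otimes}A$ (generated by $1\otimes a-a\otimes 1$), quotes the external result that $H^0(\Sp B\times\Sp C,\mathcal{O})=B\hat{\otimes}C$, and then uses coherent-sheaf theory on Liu spaces (a closed immersion corresponds to a closed ideal, and global sections are exact on coherent sheaves) to conclude $H^0(Z,\mathcal{O}_Z)=B\hat{\otimes}_A C$. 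To repair your argument you would either need to follow this route or first prove the relevant transversality statements; as written there is a genuine gap.
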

\begin{proof}
Let $Z=\Sp B\times_{\Sp A}\Sp C$.
By \cref{prop:LiuCart}, it suffices to prove
\begin{equation}\label{eq:h0tensor}
	H^0(Z,\mathcal{O}_Z)=B\hat{\otimes}_A C\,.
\end{equation}
Firstly, we consider the morphism 
\[
	\Delta_{\Sp A}:\Sp A\rightarrow \Sp A \times \Sp A\,.
\]
It is easy to see that this is a closed immersion, corresponding to the closed ideal $J$ in $A\hat{\otimes} A$ generated by $1\otimes a-a\otimes 1$ for $a\in A$. Also by \cite[Corollary~3.30]{PP}, we have 
\[
H^0(\Sp B\times \Sp C,\mathcal{O}_{\Sp B\times \Sp C})=B\hat{\otimes} C\,.
\]

Hence the closed immersion $Z\rightarrow \Sp B\times\Sp C$ corresponds to the closed ideal of $B\hat{\otimes}C$ generated by $J(B\hat{\otimes} C)$. In particular, \eqref{eq:h0tensor} holds.
\end{proof}

\begin{definition}
Let $A$ be a Liu $k$-algebra. A Banach $A$-module $M$ is \emph{finite} if there is an admissible epimorphism $A^n\rightarrow M$.
\end{definition}
Let $\ModCat^{\mathrm{fin}}(A)$ be the category of finite $A$-modules.

\begin{proposition}\label{prop:finiteLiumod}
Let $A$ be a Liu $k$-algebra.
The forgetful functor from the category of finite Banach $A$-modules (with bounded $A$-algebra homomorphisms as morphisms) to $\ModCat^{\mathrm{fin}}(A)$ is an equivalence.
\end{proposition}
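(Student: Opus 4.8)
The plan is to establish full faithfulness and essential surjectivity of the forgetful functor separately, the former by a purely formal lifting argument and the latter by exploiting that a Liu space is cohomologically Stein. Faithfulness is automatic, since the forgetful functor does nothing to morphisms. For fullness, given finite Banach $A$-modules $M$ and $N$ and an $A$-linear map $\phi\colon M\to N$ of the underlying modules, I would pick admissible epimorphisms $p\colon A^n\to M$ and $q\colon A^{n'}\to N$, lift $\phi\circ p$ through the free module $A^n$ to an $A$-linear map $u\colon A^n\to A^{n'}$ with $q\circ u=\phi\circ p$, observe that $u$ is bounded because it is given by a matrix over $A$, and conclude that $\phi$ is bounded because $p$, being an admissible epimorphism, is a topological quotient map. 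Applying the same argument to the identity map of an $A$-module carrying two finite Banach structures shows, as a byproduct, that such a structure is unique up to equivalent norms, which is what upgrades ``full and essentially surjective'' to ``equivalence''.

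For essential surjectivity, I would start from $M\in\ModCat^{\mathrm{fin}}(A)$. Since $A$ is Noetherian by \cref{prop:Liualgbasic}, $M$ has a finite presentation $A^m\xrightarrow{u}A^n\to M\to 0$ of abstract $A$-modules. Setting $X=\Sp A$, a Liu space by \cref{thm:Liu}, the matrix $u$ defines a morphism of coherent sheaves $\tilde u\colon\mathcal{O}_X^m\to\mathcal{O}_X^n$, with image $\mathcal{K}$, kernel $\mathcal{K}'$ and cokernel $\mathcal{M}$, all coherent. Because $X$ is cohomologically Stein (\cref{thm:Liuchar}), $H^1(X,\mathcal{K})=H^1(X,\mathcal{K}')=0$, so feeding the short exact sequences $0\to\mathcal{K}'\to\mathcal{O}_X^m\to\mathcal{K}\to 0$ and $0\to\mathcal{K}\to\mathcal{O}_X^n\to\mathcal{M}\to 0$ into the long exact sequence of cohomology, together with $H^0(X,\mathcal{O}_X^\ell)=A^\ell$, identifies $H^0(X,\mathcal{K})$ with $\operatorname{im}(u\colon A^m\to A^n)$ and hence $H^0(X,\mathcal{M})$ with $M$ as $A$-modules. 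On the other hand, the global sections of a coherent sheaf on a Liu space form a finite Banach $A$-module, so $M$ inherits a Banach $A$-module structure under this isomorphism, and the surjection $A^n\to M$ becomes an admissible epimorphism by the open mapping theorem in $\BanCat_k$; thus $M$ lies in the image of the forgetful functor.

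The hard part will be the single non-formal input used in the last step: that $H^0(X,\mathcal{M})$ really is a Banach $A$-module whose underlying $A$-module is $M$ — equivalently, that every finitely generated submodule of $A^n$ is closed. This is the genuine content of the statement, and I would extract it from the theory of coherent sheaves on Liu spaces in \cite{MP21}, in the spirit of \cite[Lemma~2.4, Proposition~2.6]{MP21}. Once that is in hand, the cohomological triviality of $X$ is precisely what makes a finite presentation of $M$ survive at the level of global sections, and everything else reduces to the formal lifting argument of the first paragraph.
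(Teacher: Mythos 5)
Your first paragraph is, in substance, the paper's own argument for full faithfulness: boundedness of an abstract $A$-linear map $\phi\colon M\to N$ is checked after composing with an admissible epimorphism $p\colon A^n\to M$, and any $A$-linear map out of $A^n$ into a Banach $A$-module is automatically bounded because the module action is. The lift $u$ of $\phi\circ p$ through $q$ is superfluous, and note that fully faithful plus essentially surjective is already an equivalence; the uniqueness of the Banach structure is a corollary, not an extra ingredient needed for the ``upgrade''.

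The gap is in essential surjectivity. You correctly isolate the one non-formal input — that $\operatorname{im}(u)\subseteq A^n$, equivalently $\ker(A^n\to M)$, is closed, so that the residue seminorm on $M$ is a complete norm — but you do not establish it; you propose to extract it from the statement that global sections of a coherent sheaf on a Liu space form a finite Banach $A$-module. That statement is the Kiehl-type theorem of Maculan--Poineau, which is proved in \cite{MP21} only for non-trivially valued $k$ (as this paper itself notes in the proof of \cref{thm:cohLiu}, where the trivially valued case is obtained only later, by descent), whereas \cref{prop:finiteLiumod} is needed for all $k$; moreover, in that theory the closedness of submodules of finite Banach modules is an ingredient of the proof rather than a consequence one can harvest afterwards, so the detour through $H^1$-vanishing on $\Sp A$ buys nothing. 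Your concluding appeal to ``the open mapping theorem in $\BanCat_k$'' is likewise problematic when $k$ is trivially valued. The paper's proof dispenses with all of this: $\ker\pi$ is closed by \cref{prop:Liualgbasic}(1) (Noetherianity and closedness of ideals, valid for every $k$ via base change) combined with \cite[Proposition~3.7.2.2]{BGR84}, and then $M$ with the residue norm is a finite Banach $A$-module. Once that closedness — exactly what you flagged as ``the genuine content'' — is granted, your entire coherent-sheaf computation can be deleted.
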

\begin{proof}
The functor is fully faithful. In fact, we prove more generally that for any finite Banach $A$-module $M$, any Banach $A$-module $N$, any $A$-linear map $F:M\rightarrow N$ is bounded. In fact, taking an admissible epimorphism $A^n\rightarrow M$, we may assume that $M=A^n$. In this case, the claim is clear.

The functor is essentially surjective. Take an $A$-linear epimorphism $\pi:A^n\rightarrow M$, then $\ker \pi$ is closed by \cref{prop:Liualgbasic} (1) and \cite[Proposition~3.7.2.2]{BGR84}, so we can endow $M$ with the residue Banach norm.
\end{proof}

\begin{proposition}\label{prop:finite}
Let $A$ be a Liu $k$-algebra. Let $r$ be a $k$-free polyradius. Let $M$ be  Banach $A$-module. Then $M$ is a finite Banach $A$-module if{f} $M_r$ is a finite Banach $A_r$-module.
\end{proposition}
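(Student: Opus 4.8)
The plan is to treat the two implications separately; the forward implication is soft, and the converse carries the content. For the ``only if'' direction, if $M$ is finite, fix an admissible epimorphism $A^n\to M$ with (closed) kernel $N$. Applying $-\hat{\otimes}_k k_r$ and using that $k_r$ is flat over $k$ (\cref{prop:flatkr}), the sequence $0\to N_r\to A_r^n\to M_r\to 0$ stays admissible exact; since $A^n\hat{\otimes}_k k_r=A_r^n$ and $M\hat{\otimes}_k k_r=M_r$, this presents $M_r$ as a finite Banach $A_r$-module.

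For the ``if'' direction, assume $M_r$ is a finite Banach $A_r$-module. Note first that $M\otimes_k k_r$ is dense in $M_r=M\hat{\otimes}_k k_r$, and that for $\lambda\in k_r\subseteq A_r$ we have $m\otimes\lambda=\lambda\cdot(m\otimes 1)$, so every element of $M\otimes_k k_r$ lies in the $A_r$-submodule of $M_r$ generated by $\{\,m\otimes 1 : m\in M\,\}$. Fix an admissible epimorphism $A_r^n\to M_r$ and let $\bar m_1,\dots,\bar m_n\in M_r$ be the images of the standard basis; using density, approximate each $\bar m_i$ by an element $\bar m_i'\in M\otimes_k k_r$. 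If the approximation is close enough (relative to the constant governing the strictness of $A_r^n\to M_r$), the usual successive-approximation argument for strict epimorphisms of Banach modules shows that the $\bar m_i'$ still generate $M_r$ as an $A_r$-module; expanding each $\bar m_i'$ in terms of elements $m\otimes 1$ and collecting the finitely many $m\in M$ that occur, we obtain $m_1,\dots,m_P\in M$ whose images $m_1\otimes 1,\dots,m_P\otimes 1$ generate $M_r$ over $A_r$.

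Now let $\phi\colon A^P\to M$ be the bounded $A$-linear map with $\phi(e_l)=m_l$, set $\widehat{M}_0:=A^P/\ker\phi$ (a finite Banach $A$-module, as $\ker\phi$ is closed), and let $\iota\colon\widehat{M}_0\to M$ be the induced bounded injection. Base-changing along $k_r$ and using flatness (\cref{prop:flatkr}), the map $\phi_r\colon A_r^P\to M_r$ factors as an admissible epimorphism $A_r^P\to(\widehat{M}_0)_r$ followed by $\iota_r\colon(\widehat{M}_0)_r\to M_r$; by the previous paragraph $\phi_r$, hence $\iota_r$, is surjective, and $\iota_r$ is injective because $-\hat{\otimes}_k k_r$ preserves monomorphisms (in the explicit model, $E\hat{\otimes}_k k_r$ is the space of $r$-null families valued in $E$ and $\iota_r$ acts coordinatewise). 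Thus $\iota_r$ is a bounded bijection of Banach $k_r$-modules, hence a topological isomorphism by the open mapping theorem over $k_r$ (which is non-trivially valued unless $r$ is empty, a case in which the statement is trivial). Since $k$ sits in $k_r$ as an orthogonal direct summand, $\|x\otimes 1\|=\|x\|$ after any such base change; restricting the bound for $\iota_r^{-1}$ to elements $\eta\otimes 1$ gives $\|\eta\|\le C\|\iota(\eta)\|$, so $\iota$ is bounded below and $\iota(\widehat{M}_0)$ is closed in $M$. On the other hand $\operatorname{coker}\iota$ (computed in $\BanModCat_A$) satisfies $(\operatorname{coker}\iota)\hat{\otimes}_k k_r=\operatorname{coker}\iota_r=0$ since $\hat{\otimes}_k k_r$ is a left adjoint, and therefore $\operatorname{coker}\iota=0$ because $-\hat{\otimes}_k k_r$ is faithful (again as $k$ is a direct summand of $k_r$); hence $\iota$ has dense image. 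A closed, dense submodule is the whole module, so $\iota$ is an isomorphism and $M\cong\widehat{M}_0$ is a finite Banach $A$-module.

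The delicate point, which I expect to be the main obstacle, is upgrading ``$M_r$ is topologically generated by the image of $M$'' to an honest finite generating family $m_1\otimes 1,\dots,m_P\otimes 1$ defined over $M$ — this is exactly what the approximation step delivers. The argument also leans on two properties of $-\hat{\otimes}_k k_r$, namely preservation of (not necessarily strict) monomorphisms and faithfulness, which are slightly stronger than the verbatim statement of \cref{prop:flatkr} but follow from the orthonormalizable structure of $k_r$ over $k$ (cf.\ \cite[Proposition~2.1.2]{Berk12}); alternatively one can run the more classical argument via Noetherianness of $A_r$ (\cref{lma:Liutensor}, \cref{prop:Liualgbasic}) and closedness of finitely generated submodules.
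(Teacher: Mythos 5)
Your proof is correct, and it is in substance the very argument the paper invokes: the paper's entire proof is a one-line citation of Berkovich's Proposition~2.1.11, whose proof follows the same scheme you use --- perturb generators of $M_r$ within the strictness constant of an admissible epimorphism $A_r^n\to M_r$ so that they lie in $M\otimes_k k_r$, extract finitely many elements $m_1,\dots,m_P\in M$, and descend along the split isometric embedding $k\to k_r$. The only difference is organizational: where Berkovich concludes by using the norm-bounded constant-term retraction $M_r\to M$ to produce norm-controlled preimages for $A^P\to M$ directly, you route through $\widehat{M}_0=A^P/\ker\phi$, the open mapping theorem over the non-trivially valued field $k_r$, and the faithfulness and mono-preservation of $-\hat{\otimes}_k k_r$, all of which are indeed justified by the explicit description of $k_r$ as an orthogonal completed direct sum (cf.\ \cref{prop:flatkr} and the surrounding discussion in Berkovich), so the two proofs are essentially the same.
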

\begin{proof}
This follows \emph{verbatim} from \cite[Proof of Proposition~2.1.11]{Berk12}.
\end{proof}

\begin{theorem}\label{thm:Day1}
Let $X=\Sp A$ be a Liu $k$-analytic space. Let $r$ be a $k$-free polyradius.
Consider a descent datum $(M_r,\varphi)$ of Banach modules over $A_r$. Then the descent datum is effective with respect to the natural morphism $\Sp A_r\rightarrow \Sp A$. Moreover, if $M_r$ is finitely generated as $A_r$-module, then the descent $M$ is finitely generated as $A$-module.
\end{theorem}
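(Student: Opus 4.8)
The plan is to carry out faithfully flat descent along $A\to A_r$ by means of the Amitsur complex, exactly as in the affinoid case, and then to read off the finiteness assertion separately.

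\emph{Construction.} Write $B:=A_r$ and $C:=A_r\hat{\otimes}_A A_r$, with the two coprojections $p_0,p_1:B\to C$ and the multiplication $\mu:C\to B$, the last being a morphism of Banach $k$-algebras with $\mu\circ p_0=\mu\circ p_1=\mathrm{id}_B$. The datum $\varphi$ is a $C$-linear isomorphism between the two scalar extensions $M_r\hat{\otimes}_{B,p_1}C$ and $M_r\hat{\otimes}_{B,p_0}C$ obeying the cocycle identity over $A_r\hat{\otimes}_A A_r\hat{\otimes}_A A_r$. Set
\[
	M:=\mathrm{Eq}\left(M_r\;\rightrightarrows\;M_r\hat{\otimes}_{B,p_0}C\right),
\]
where one arrow is $m\mapsto m\otimes 1$ and the other is $m\mapsto\varphi(m\otimes 1)$. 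Both arrows are bounded and $A$-linear, so $M$ is a closed $A$-submodule of $M_r$, hence a Banach $A$-module, and it carries a canonical bounded $A_r$-linear comparison map $\beta:M\hat{\otimes}_A A_r\to M_r$.

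\emph{Effectiveness.} I claim $\beta$ is an isomorphism of Banach $A_r$-modules, compatibly with $\varphi$, and that $(M,\beta)$ is unique up to unique isomorphism. Three ingredients enter. (i) By \cref{prop:flatkr}, $k_r$ is flat over $k$, so $A_r$ is flat over $A$ and $-\hat{\otimes}_A A_r$ preserves admissible exact sequences; likewise for $-\hat{\otimes}_{B,p_0}C$. (ii) The map $k_r\to k$ extracting the constant term is a bounded $k$-linear retraction of $k\hookrightarrow k_r$; hence $A\hookrightarrow A_r$ is split injective in $\BanModCat_A$, which (together with the open mapping theorem) makes $-\hat{\otimes}_A A_r$ \emph{reflect} isomorphisms — any $A$-linear $f$ is a retract of $f\hat{\otimes}_A A_r$ in the arrow category — and, more generally, reflect strictness of morphisms and strict exactness of complexes of Banach $A$-modules. (iii) After scalar extension along $p_0$, the multiplication $\mu:C\to B$ exhibits $C$ as a $B$-algebra augmented over $B$, so the base-changed Amitsur complex is contractible, hence strictly exact. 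Granting these, the standard argument runs: using (i) and the strictness in (ii), flat base change commutes with the equalizer defining $M$, so $\beta\hat{\otimes}_A A_r$ is identified with the comparison map computed over $A_r$; there the descent datum trivializes by (iii) and the cocycle identity, so $\beta\hat{\otimes}_A A_r$ is an isomorphism; by the reflection property in (ii), $\beta$ itself is an isomorphism. Uniqueness is formal from the same exactness.

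\emph{The finite case.} Suppose $M_r$ is finitely generated as an $A_r$-module. By \cref{lma:Liutensor}, $A_r$ is a Liu $k$-algebra, so by \cref{prop:finiteLiumod} $M_r$ is a finite Banach $A_r$-module in its given Banach structure. Since $\beta$ is an isomorphism of Banach $A_r$-modules, $M\hat{\otimes}_A A_r$ is a finite Banach $A_r$-module, whence \cref{prop:finite} gives that $M$ is a finite Banach $A$-module; in particular $M$ is finitely generated over $A$.

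\emph{Main difficulty.} The conceptual skeleton is nothing but faithfully flat descent; the real work lies in the quasi-abelian bookkeeping behind (ii) and (iii), namely checking that the relevant Amitsur complexes are \emph{strictly} exact, not merely exact as complexes of abstract modules, so that $-\hat{\otimes}_A A_r$ commutes with the defining equalizer and so that the isomorphism $\beta$ may legitimately be detected after base change. This rests on $\BanModCat_A$ being idempotent-complete, on bounded bijections of Banach modules being isomorphisms, and on strictness and contractibility being stable under retracts in $\BanCat_k$ — the very points Berkovich verifies in the affinoid setting (\cite{Berk12}), which transfer here because $A_r$ is again Noetherian with all ideals closed by \cref{prop:Liualgbasic}.
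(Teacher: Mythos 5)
Your proof is correct and is essentially the argument the paper outsources to \cite[Proof of Proposition~3.3]{Day21}: descent along the split inclusion $A\hookrightarrow A_r$ furnished by the constant-term retraction of $k\hookrightarrow k_r$, with effectiveness extracted from the (base-changed, contractible) Amitsur complex and the finiteness statement then deduced from \cref{prop:finite} exactly as in the paper. The one imprecision is your claim that $-\hat{\otimes}_A A_r$ commutes with the defining equalizer ``using (i) and the strictness in (ii)'': flatness for admissible exact sequences does not by itself give preservation of kernels of possibly non-strict maps such as the difference of the two arrows, but this preservation does hold for $\hat{\otimes}_k k_r$ via its explicit description as weighted null series with coefficients in the module, so the step is true and the lapse is harmless.
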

\begin{proof}
The first part follows \emph{verbatim} from \cite[Proof of Proposition~3.3]{Day21}. The second part follows from \cref{prop:finite}.
\end{proof}

\subsection{Coherent sheaves on Liu $k$-analytic spaces}

\begin{definition}
Let $X=\Sp A$ be a Liu $k$-analytic space.
Let $M$ be a finite $A$-module. Then we define a sheaf $\widetilde{M}$ on $X$ as the sheafification of the presheaf $\Sp B\mapsto M\otimes_A B$, where $\Sp B$ runs over the set of affinoid domains in $X$.
\end{definition}

\begin{proposition}[{\cite[Lemma~2.4]{MP21}}]
Let $X=\Sp A$ be a Liu $k$-analytic space. Let $M$ be a finite $A$-module. Then $\widetilde{M}$ is a coherent sheaf on $X$. Moreover, for each affinoid domain $\Sp B$ in $X$, 
\begin{equation}\label{eq:cohloc}
	H^0(\Sp B,\widetilde{M})=M\otimes_{A}B\,.
\end{equation}
\end{proposition}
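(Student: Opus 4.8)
The plan is to reduce the statement to the affinoid case, where it is classical, via base extension to a non-trivially valued field together with faithfully flat descent, exactly as several earlier results in this section have been reduced.

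First I would recall why $\widetilde{M}$ is coherent: locally on an affinoid domain $\Sp B\subseteq X$ the presheaf $\Sp B'\mapsto M\otimes_A B'$ for affinoid $\Sp B'\subseteq\Sp B$ already agrees with the sheaf $M\otimes_A B$ associated to the finite $B$-module $M\otimes_A B$ (here I use that $B$ is $A$-flat for affinoid domains, or more precisely that the presheaf restricted to $\Sp B$ is the one attached to $M\otimes_A B$ as a $B$-module), so the sheafification is a coherent $\mathcal{O}_X|_{\Sp B}$-module; gluing over a finite affinoid cover of $X$ gives coherence on $X$. This uses \cite[Lemma~2.4]{MP21}, which is cited in the statement. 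The content of the proposition is therefore really equation \eqref{eq:cohloc}, i.e. the computation of global sections over an affinoid domain, and it suffices to prove it for $\Sp B = X = \Sp A$ itself, since an affinoid domain $\Sp B$ in a Liu space is again affinoid and $\widetilde{M}|_{\Sp B}$ is the sheaf attached to the finite $B$-module $M\otimes_A B$.

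So I would be reduced to showing $H^0(\Sp A,\widetilde{M}) = M$ for a Liu $k$-algebra $A$ and a finite $A$-module $M$. When $k$ is non-trivially valued and $A$ is strict this is classical: $\widetilde{M}$ is then the usual coherent sheaf on the strict Liu space $\Sp A$, and the vanishing $H^1(\Sp A,\mathcal{O}_X^{\oplus\ast})=0$ coming from the Liu property (\cref{thm:Liuchar}) lets one compute global sections of $\widetilde{M}$ from a finite presentation $A^n\to A^m\to M\to 0$: applying the left-exact functor $H^0(\Sp A,-)$ and using $H^0(\Sp A,\widetilde{A})=A$ together with $H^1(\Sp A,\widetilde{A^n})=0$ gives a right-exact sequence and hence $H^0(\Sp A,\widetilde{M})=M$. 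In general I would pick a $k$-free polyradius $r$ with $k_r$ non-trivially valued, so that $A_r$ is a strict Liu $k_r$-algebra by \cref{lma:Liutensor} and $M_r$ is a finite $A_r$-module by \cref{prop:finite}; the formation of $\widetilde{M}$ is compatible with the base change $\Sp A_r\to\Sp A$ (on affinoid domains both sides are $M\otimes_A B\otimes_k k_r$), and by \cref{prop:flatkr} together with the flatness of $k_r$ one has $H^0(\Sp A_r,\widetilde{M}_r)=H^0(\Sp A,\widetilde{M})\hat\otimes_k k_r$. Combining with the non-trivially valued case gives $H^0(\Sp A,\widetilde{M})\hat\otimes_k k_r = M\hat\otimes_k k_r = M_r$, and then faithful flatness of $k_r$ over $k$ (or the descent statement \cref{thm:Day1}) forces $H^0(\Sp A,\widetilde{M}) = M$ as $A$-modules.

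The main obstacle I anticipate is bookkeeping the compatibility of $\widetilde{M}$ with base extension and with restriction to affinoid domains — i.e. checking that the sheafification and the operations $-\otimes_A B$, $-\hat\otimes_k k_r$, and $H^0(\Sp A,-)$ interact as claimed, in particular that $H^0$ commutes with $\hat\otimes_k k_r$, which rests on the flatness of $k_r$ in $\BanCat_k$ (\cref{prop:flatkr}) applied to a finite presentation and on the fact that $H^0$ of a coherent sheaf on a Liu space is computed by a finite acyclic affinoid cover. None of this is deep, but it is where the care is needed; the actual algebra, once the reduction is set up, is the short exact-sequence chase above.
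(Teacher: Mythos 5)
First, note that the paper itself gives no argument for this proposition: it is quoted directly from \cite[Lemma~2.4]{MP21}, so the comparison is with the natural local proof that that lemma encodes. Your first paragraph already contains that proof in full, and you do not seem to notice it: for an affinoid domain $\Sp B\subseteq X$, the defining presheaf restricted to affinoid subdomains $\Sp B'\subseteq \Sp B$ is $\Sp B'\mapsto (M\otimes_A B)\otimes_B B'$, i.e.\ the classical presheaf attached to the finite $B$-module $M\otimes_A B$; since sheafification commutes with restriction to the slice site, $\widetilde{M}|_{\Sp B}$ is the usual coherent sheaf attached to $M\otimes_A B$, and Kiehl--Tate for Berkovich affinoid spaces (valid over any complete $k$, trivially valued or not, strict or not) gives $H^0(\Sp B,\widetilde{M})=M\otimes_A B$. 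That is all the proposition asserts: both coherence and \eqref{eq:cohloc} are statements over affinoid domains, and no Liu-space cohomology, no base extension to $k_r$, and no descent is needed.

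The rest of your argument is aimed at the wrong target. The reduction ``it suffices to prove it for $\Sp B=X=\Sp A$'' conflates the affinoid domain with the ambient Liu space: in the proposition $\Sp B$ is affinoid, so the case $\Sp B=X$ is again the classical affinoid case, whereas what you go on to prove is $H^0(\Sp A,\widetilde{M})=M$ for a general Liu $k$-algebra $A$. That is a strictly stronger statement which the proposition deliberately does not make; it is the content of \cref{thm:cohLiu} (i.e.\ \cite[Proposition~2.6]{MP21}), proved later in the paper precisely by the base-change-to-$k_r$ and descent route (\cref{thm:Day1}, \cite{Day21}) that you sketch. Moreover, within that detour your treatment of the strict, non-trivially valued case is too quick as written: from a presentation $A^n\to A^m\to M\to 0$, vanishing of $H^1(\Sp A,\mathcal{O}_X^{\oplus *})$ alone does not yield $H^0(\Sp A,\widetilde{M})=M$; one needs $H^1$ of the coherent kernel subsheaf of $\widetilde{A^m}\to\widetilde{M}$ to vanish (i.e.\ universal cohomological Steinness for all coherent sheaves, via \cref{thm:Liuchar}, not just acyclicity of $\mathcal{O}_X$), and one must separately identify $H^0$ of that kernel sheaf with the module-theoretic kernel (e.g.\ by first proving surjectivity of $N\to H^0(\Sp A,\widetilde{N})$ for every finite $N$ and then applying it to the kernel module) to get injectivity of $M\to H^0(\Sp A,\widetilde{M})$. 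These gaps are repairable with results quoted in the paper, but for the statement actually at hand they are unnecessary: stop after your first paragraph and invoke the affinoid theory.
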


Now we recall the theory of coherent sheaves on Liu $k$-analytic spaces. The following is the analogue of Cartan's Theorem~A.
\begin{theorem}[{\cite[Proposition~2.1]{MP21}}]\label{thm:CA}
Assume that $k$ is non-trivially valued.
Let $X$ be a Liu $k$-analytic space. For each coherent sheaf $\mathcal{F}$ on $X$ and each $x\in X$, $H^0(X,\mathcal{F})$ generates $\mathcal{F}_x$ as an $\mathcal{O}_{X,x}$-module. 
\end{theorem}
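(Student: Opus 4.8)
The plan is to deduce this from the analogue of Cartan's Theorem~B for Liu spaces — the cohomological Steinness provided by \cref{thm:Liuchar} — by a Nakayama argument, after reducing to the case of a rigid point of a strict space. Concretely, fix a coherent $\mathcal{F}$ and a point $x\in X$. Since $\mathcal{O}_{X,x}$ is a Noetherian local ring and $\mathcal{F}_x$ a finite module over it, Nakayama's lemma shows it suffices to prove that the composite
\[
	H^0(X,\mathcal{F})\longrightarrow \mathcal{F}_x \longrightarrow \mathcal{F}_x/\mathfrak{m}_x\mathcal{F}_x
\]
is surjective onto the $\kappa(x)$-vector space $\mathcal{F}_x/\mathfrak{m}_x\mathcal{F}_x$.

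Suppose first that $x$ is a rigid point admitting a strict affinoid neighbourhood $V=\Sp B$ in $X$, with $\mathfrak{m}\subset B$ the maximal ideal corresponding to $x$. Then $\{x\}$ is a closed analytic subspace of $X$, cut out by a coherent ideal sheaf $\mathcal{I}$ which restricts to $\widetilde{\mathfrak{m}}$ on $V$ and to $\mathcal{O}_X$ away from $x$ (these glue because $\mathfrak{m}$ generates the unit ideal at every point of $V$ other than $x$). Put $\mathcal{N}:=\mathcal{I}\mathcal{F}$, a coherent subsheaf of $\mathcal{F}$; then $\mathcal{F}/\mathcal{N}$ is the skyscraper sheaf at $x$ with stalk $\mathcal{F}_x/\mathfrak{m}_x\mathcal{F}_x$, so $H^0(X,\mathcal{F}/\mathcal{N})=\mathcal{F}_x/\mathfrak{m}_x\mathcal{F}_x$. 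Since $X$ is Liu, it is cohomologically Stein by \cref{thm:Liuchar}, hence $H^1(X,\mathcal{N})=0$; the cohomology exact sequence of $0\to\mathcal{N}\to\mathcal{F}\to\mathcal{F}/\mathcal{N}\to0$ then yields the surjectivity demanded above.

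It remains to reduce the general case to this one. If $X$ is strict but $x$ is arbitrary, I would observe that for each finite family $s_1,\dots,s_N\in H^0(X,\mathcal{F})$ the locus where the germs of the $s_i$ generate $\mathcal{F}$ is Zariski-open, hence so is the union $U$ of all such loci; by the previous paragraph $U$ contains every rigid point of $X$, and since $k$ is non-trivially valued the rigid points are Zariski-dense in the strict space $X$ (reduce to a finite cover by strict affinoid domains and use that strict affinoid algebras are Jacobson and Noetherian), so $U=X$. For a general Liu space $X$ and general point $x$, I would pass to a strict model by base change: $X$ being quasi-compact, there is a $k$-free polyradius $r$ with $X_{k_r}$ a strict $k_r$-analytic space, which is again Liu; picking $x'\in X_{k_r}$ over $x$, the strict case applies to $(X_{k_r},\mathcal{F}_{k_r},x')$. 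Because $\mathcal{O}_{X,x}\to\mathcal{O}_{X_{k_r},x'}$ is faithfully flat, $\mathcal{F}_{k_r,x'}=\mathcal{F}_x\otimes_{\mathcal{O}_{X,x}}\mathcal{O}_{X_{k_r},x'}$, and $H^0(X_{k_r},\mathcal{F}_{k_r})=H^0(X,\mathcal{F})\hat{\otimes}_k k_r$ by flat base change (\cref{prop:flatkr} together with the vanishing of higher cohomology), generation upstairs descends to generation of $\mathcal{F}_x$ by $H^0(X,\mathcal{F})$.

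I expect the genuine difficulty to lie in this last reduction rather than in the skyscraper argument. One must make the Zariski-density of rigid points — and the resulting openness-and-covering argument — precise (this is exactly where the hypothesis that $k$ is non-trivially valued is used), and one must carefully check the base-change comparison maps, both for global sections (via \cref{prop:flatkr}, which is available because $X$ is quasi-compact and separated so its Čech cohomology can be computed on a finite affinoid cover) and for stalks, so that ``$H^0(X,\mathcal{F})$ generates $\mathcal{F}_x$'' becomes a bona fide faithfully flat descent statement.
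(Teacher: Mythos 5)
The paper never proves \cref{thm:CA}: it is imported verbatim from \cite[Proposition~2.1]{MP21}, so there is no internal proof to compare against. Your strategy --- Nakayama plus a skyscraper short exact sequence and Theorem~B (cohomological Steinness from \cref{thm:Liuchar}) at rigid points, then spreading out via Zariski-openness of the generation locus and density of rigid points in the strict case, then descent along $k\rightarrow k_r$ to remove strictness --- is the classical Cartan-style argument and is sound in outline.

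The one step that does not work as written is the opening of the rigid-point case: you assume that $x$ admits a strict affinoid \emph{neighbourhood} $V=\Sp B$. The spaces here are Berkovich spaces that are not assumed good, and Liu spaces need not be good, so such a neighbourhood need not exist; what is guaranteed is only that $x$ lies in strict affinoid domains, and that finitely many strict affinoid domains containing $x$ have union a neighbourhood of $x$. This matters because your gluing of $\mathcal{I}$ over the pair $\{V,\,X\setminus\{x\}\}$ requires that pair to be a G-covering, which forces $V$ to be a neighbourhood of $x$. The repair is routine: either glue the ideals $\widetilde{\mathfrak{m}_i}$ over finitely many strict affinoid domains $V_1,\dots,V_n$ containing $x$ whose union is a neighbourhood of $x$, together with $X\setminus\{x\}$; or observe that $x$ defines a closed immersion $i\colon \Sp\kappa(x)\rightarrow X$ (checked on affinoid domains, where the character is surjective since $\kappa(x)$ is finite over $k$) and set $\mathcal{I}=\ker\bigl(\mathcal{O}_X\rightarrow i_*\mathcal{O}_{\Sp\kappa(x)}\bigr)$. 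With that fix the skyscraper argument does give generation at \emph{every} rigid point, which is what your density step requires. The remaining ingredients are only sketched --- Zariski-closedness of the non-generation locus (best seen as the support of the coherent quotient by the image subsheaf of the evaluation map, coherent by Noetherianity on a finite affinoid cover), existence of rigid points in nonempty Zariski-closed subsets of strict spaces, flatness of $\mathcal{O}_{X,x}\rightarrow\mathcal{O}_{X_{k_r},x'}$, and the identification $H^0(X_{k_r},\mathcal{F}_{k_r})=H^0(X,\mathcal{F})\hat{\otimes}_k k_r$ via a finite affinoid \v{C}ech complex and \cref{prop:flatkr} --- but these are standard and you correctly flag them as the points needing care; in the final descent it is cleanest to argue that the coherent quotient $\mathcal{F}/\mathcal{G}$ by the image subsheaf pulls back to zero and to conclude by faithful flatness of the stalk maps, which avoids any issue with infinite sums in the completed tensor product.
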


In the rigid setting, Cartan's Theorem~A and Theorem~B are due to Kiehl \cite{Kie67} and Tate \cite{Tat71} respectively.

As explained in \cite{Kie67}, Theorem~A and Theorem~B together imply the following result:
\begin{theorem}\label{thm:cohLiu}
Let $X=\Sp A$ be a Liu $k$-analytic space. Then the category of coherent sheaves on $X$ is equivalent to the category of finite $A$-modules. The functors are given by $\mathcal{F}\mapsto H^0(X,\mathcal{F})$ and $M\mapsto \widetilde{M}$ respectively. 
\end{theorem}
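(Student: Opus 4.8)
The plan is to reproduce Kiehl's deduction \cite{Kie67}: the statement follows formally from the non-Archimedean analogues of Cartan's Theorem~A and Theorem~B together with Noetherianity of $A$ (\cref{prop:Liualgbasic}). Here Theorem~B is the acyclicity of coherent sheaves on a Liu space, valid over any complete $k$ by \cref{thm:Liuchar}, while Theorem~A is \cref{thm:CA}, which is available when $k$ is non-trivially valued; accordingly I would first prove the equivalence for non-trivially valued $k$ and then descend. We have the functor $M\mapsto\widetilde{M}$ into $\CohCat_X$ (recorded above) and the functor $\mathcal{F}\mapsto H^0(X,\mathcal{F})$ into finite $A$-modules (finiteness being part of what is to be proved), together with the evident natural transformations $M\to H^0(X,\widetilde{M})$ and $\widetilde{H^0(X,\mathcal{F})}\to\mathcal{F}$; the task is to show these are isomorphisms.

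For the first, let $M$ be a finite $A$-module and choose a finite presentation $A^m\xrightarrow{\alpha}A^n\to M\to 0$, which exists since $A$ is Noetherian. Sheafification is right exact, so $\mathcal{O}_X^m\to\mathcal{O}_X^n\to\widetilde{M}\to 0$ is exact; put $\mathcal{K}:=\ker(\mathcal{O}_X^n\to\widetilde{M})=\operatorname{im}(\mathcal{O}_X^m\to\mathcal{O}_X^n)$, a coherent sheaf, and let $\mathcal{K}_1$ be the kernel of the resulting surjection $\mathcal{O}_X^m\to\mathcal{K}$. Since $X$ is Liu, $H^1(X,\mathcal{K}_1)=H^1(X,\mathcal{K})=0$, so $A^m\to H^0(X,\mathcal{K})$ is surjective and $0\to H^0(X,\mathcal{K})\to A^n\to H^0(X,\widetilde{M})\to 0$ is exact. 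The composite $A^m\to H^0(X,\mathcal{K})\hookrightarrow A^n$ equals $\alpha$, whence $H^0(X,\mathcal{K})=\operatorname{im}\alpha$ and $H^0(X,\widetilde{M})=A^n/\operatorname{im}\alpha=M$; a short diagram chase identifies this isomorphism with the natural map. This step uses only Theorem~B, so it holds for every $k$.

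For the second, let $\mathcal{F}$ be coherent and put $M:=H^0(X,\mathcal{F})$. By \cref{thm:CA}, for each $x\in X$ the finitely generated $\mathcal{O}_{X,x}$-module $\mathcal{F}_x$ is generated by finitely many global sections of $\mathcal{F}$, which then generate $\mathcal{F}$ on an analytic neighbourhood of $x$ (the cokernel of the associated $\mathcal{O}_X^n\to\mathcal{F}$ is coherent with vanishing stalk at $x$). Quasi-compactness of $X$ yields a surjection $\mathcal{O}_X^n\to\mathcal{F}$ defined by global sections; its kernel is coherent, and repeating the argument produces an exact sequence $\mathcal{O}_X^m\xrightarrow{\beta}\mathcal{O}_X^n\to\mathcal{F}\to 0$ in which $\beta$ is a matrix over $A$. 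Applying $H^0$ together with the same $H^1$-vanishing shows $A^m\xrightarrow{\beta}A^n\to M\to 0$ is a presentation of $M$ (in particular $M$ is finite); since $\beta$ is the sheafification of the matrix it represents, $\widetilde{M}=\operatorname{coker}(\mathcal{O}_X^m\xrightarrow{\beta}\mathcal{O}_X^n)=\mathcal{F}$, naturally. Together with the previous paragraph this settles the non-trivially valued case.

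For trivially valued $k$, fix a $k$-free polyradius $r$ with $k_r$ non-trivially valued; then $A_r$ is a Liu $k_r$-algebra (\cref{lma:Liutensor}) and $X_r=\Sp A_r$ is Liu, so the equivalence holds over $k_r$. As $k_r$ is a nonzero flat $k$-module (\cref{prop:flatkr}) it is faithfully flat, and $H^0$, the formation of finite kernels and cokernels, and $M\mapsto\widetilde{M}$ all commute with the base change $-\,\hat{\otimes}_k k_r$; hence $H^0(X,\widetilde{M})_r=M_r$ forces $H^0(X,\widetilde{M})=M$, finiteness of $H^0(X,\mathcal{F})$ over $A$ follows from \cref{prop:finite}, and the natural map $\widetilde{H^0(X,\mathcal{F})}\to\mathcal{F}$ is an isomorphism since its kernel and cokernel are coherent and vanish after the faithfully flat base change, hence vanish; \cref{thm:Day1} supplies effectivity of descent where it is needed. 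The one genuinely geometric input is the passage from the fact that $H^0(X,\mathcal{F})$ generates every stalk to the fact that finitely many global sections generate $\mathcal{F}$ on all of $X$ — this is where coherence, quasi-compactness and \cref{thm:CA} enter — and I expect the main nuisance to be purely bookkeeping: checking that the sheafified module maps coincide with the original maps of coherent sheaves, and running the faithfully flat descent carefully in the trivially valued case.
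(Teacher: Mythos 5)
Your argument is correct in substance but, in the trivially valued case, takes a genuinely different route from the paper's. For non-trivially valued $k$ the paper simply cites \cite[Proposition~2.6]{MP21}; your Kiehl-style deduction from \cref{thm:CA} (Theorem~A) and acyclicity (Theorem~B, valid over any $k$ by \cref{thm:Liuchar}) is essentially that cited proof, so there is no real difference there. The divergence is in the trivially valued case: the paper obtains the equivalence by descent of categories --- \cite[Théorème~3.13]{Day21} identifies $\CohCat(X)$ with descent data of coherent sheaves on $X_r$ with respect to $\pi:X_r\rightarrow X$, and \cref{thm:Day1} identifies those with descent data of finite $A_r$-modules, hence with finite $A$-modules --- whereas you avoid Day's descent theorem entirely and instead verify directly that the unit and counit of the adjunction become isomorphisms after the base change $-\hat{\otimes}_k k_r$. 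Your route is legitimate and arguably more elementary, but it shifts the burden onto base-change statements the paper never has to prove: that $H^0(X,\mathcal{F})\hat{\otimes}_k k_r\rightarrow H^0(X_r,\mathcal{F}_r)$ is an isomorphism for coherent $\mathcal{F}$, that $\pi^*\widetilde{M}\cong\widetilde{M_r}$ for finite $M$, and that $\pi^*$ is conservative on coherent sheaves. These do hold, but \cref{prop:flatkr} as stated (preservation of admissible short exact sequences) is not literally enough: for the Čech kernel sequence of a finite affinoid covering you need that $-\hat{\otimes}_k k_r$ preserves kernels of arbitrary bounded maps, and for conservativity you need the isometric embedding $M\hookrightarrow M_r$; both follow from Berkovich's explicit description of $M_r$ as power series with coefficients in $M$, so this is a gap in bookkeeping rather than a step that fails, but it should be spelled out. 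Finally, note that \cref{thm:Day1} is not actually needed in your argument --- you never construct an object by descent --- whereas in the paper it, together with Day's theorem, is the engine of the proof; what each approach buys is clear: the paper outsources the analytic input to the descent results, while yours keeps the proof self-contained modulo Berkovich's flatness properties of $k_r$.
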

\begin{proof}
This result was proved in \cite[{Proposition~2.6}]{MP21} under the assumption that $k$ is non-trivially valued. When $k$ is trivially valued, take a $k$-free polyradius $r$ with at least one component.
By \cite[Théorème~3.13]{Day21}, the category of coherent sheaves on $X$ is equivalent to the category of descent data of coherent sheaves on $X_r$ with respect to $X_r\rightarrow X$. The latter category is equivalent to the category of descent data of finite $A_r$-modules with respect to $A\rightarrow A_r$, which is then equivalent to the category of finite $A$-modules by \cref{thm:Day1}. It is easy to see that the composition of these functors is exactly the one given in the theorem.
The functors in the proof are summarized in the following diagram:
\[
\begin{tikzcd}
\DesCat(\CohCat,X_r\rightarrow X) \arrow[d] \arrow[r] & \DesCat(\ModCat^{\mathrm{fin}},A\rightarrow A_r) \arrow[d] \\
\CohCat(X) \arrow[r]   & \ModCat^{\mathrm{fin}}(A)         
\end{tikzcd}\,.
\]
\end{proof}
In particular, \cref{thm:CA} holds even when $k$ is trivially valued.

\subsection{Quasi-coherent sheaves on Liu spaces}

\begin{definition}
    Let $f:A\rightarrow B$ be a morphism in $\LiuAlgCat_k$. We say $f$ is a \emph{homotopy epimorphism} if the corresponding morphism $\Sp B\rightarrow \Sp A$ of Liu $k$-spaces identifies $\Sp B$ with a Liu domain in $\Sp A$.
\end{definition}

\begin{definition}
Let $A$ be a Liu $k$-algebra. A Banach $A$-module $M$ is called \emph{transversal} if $M$ is transversal to all homotopy epimorphisms from $A$: for all homotopy epimorphism $A\rightarrow B$ to a Liu $k$-algebra $B$, the natural morphism
\[
	M\otL_A B\rightarrow M\hat{\otimes}_A B
\]
is an isomorphism.
\end{definition}

The following result will be proved in \cref{sec:BBK}.
\begin{theorem}\label{thm:liut}
Let $A$ be a Liu $k$-algebra. Let $B,C$ be Liu $k$-algebras over $A$ such that $\Sp C\rightarrow \Sp A$ is a Liu domain. Then the natural morphism
\[
	C\otL_A B \rightarrow C\hat{\otimes}_A B
\]
is an isomorphism.
\end{theorem}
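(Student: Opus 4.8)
The plan is to reduce the statement to a local model and then verify transversality there. Since $\Sp C \to \Sp A$ is a Liu domain, by \cite[Proposition~3.13]{MP21} we may cover $\Sp A$ (or rather present $\Sp C$) using finitely many Runge immersions: there is a finite family of rational-type data exhibiting $\Sp C \to \Sp A$ locally as a composite of a closed immersion into a polydisc over $\Sp A$ followed by a Weierstrass or Laurent localization. Because $C \otL_A B \to C \hat{\otimes}_A B$ being an isomorphism can be checked G-locally on $\Sp B$ (the derived tensor product commutes with the completed tensor products defining localizations, and both sides form complexes of sheaves whose stalks/sections on an affinoid cover determine them), the first reduction I would make is to the case where $B$ is affinoid, or even where $\Sp B \to \Sp A$ is itself one of the standard generators. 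The second reduction, using that Liu domains are built from rational domains via Runge immersions and that closed immersions and Laurent/Weierstrass localizations generate them, is to the two base cases: (a) $C = A\langle f_1/g,\dots\rangle$-type localizations (rational localizations), and (b) $C = A/I$ for a finitely generated closed ideal $I$ arising as the kernel of a Runge presentation.

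For case (a), a rational localization $A \to A\{r^{-1}T, sT'\}/(\dots)$ is a composite of Tate-algebra base changes $A \to A\langle T\rangle$ (for which transversality is immediate since $A\langle T\rangle$ is a direct sum completion, hence flat in $\BanCat_k$ and the derived and underived tensor products agree — this is exactly the content behind \cref{prop:flatkr} applied in families, or can be extracted from \cite{BBK17}, see \cref{sec:BBK}) followed by a Koszul-type quotient by a regular sequence $T_i - f_i$. The key computational input is that the Koszul complex $\mathrm{Kosz}(T - f; A\langle T\rangle)$ is a strict resolution of the localization, and that tensoring this strict complex of flat (projective) Banach modules with $B$ over $A$ computes $C\otL_A B$; one then checks the homology vanishes in positive degrees because the corresponding algebraic statement (flatness of rational localizations, or rather the transversality recorded in \cite{BBK17}) holds. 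I would lean on the results collected in \cref{sec:BBK} here rather than redo the homological algebra.

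For case (b), a closed immersion $A \to A/I$ with $I$ finitely generated: here one must show $(A/I) \otL_A B \to (A/I)\hat{\otimes}_A B = B/IB$ is an isomorphism. This is the point where the Liu hypothesis genuinely enters: $I$ is generated by finitely many elements (\cref{prop:Liualgbasic}(1)), and one resolves $A/I$ by a finite complex of finite free Banach $A$-modules — the existence of such a \emph{strict} resolution is where coherence of $A$ and \cref{thm:cohLiu} are used, together with the admissibility of the relevant maps coming from closedness of ideals in Liu algebras (\cite[Proposition~3.7.2.2]{BGR84} and its trivially-valued analogue in \cref{prop:Liualgbasic}). Tensoring this strict resolution with $B$ and identifying the result with the analogous resolution over $B$ — which is legitimate because, the immersion being Runge, $\Sp B \to \Sp A$ pulls the closed subspace back to the closed subspace cut out by $IB$, and the latter is again a closed ideal in the Liu algebra $B$ — gives the claim.

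\textbf{Main obstacle.} The principal difficulty I anticipate is controlling \emph{strictness} throughout: $C\otL_A B \to C\hat{\otimes}_A B$ is an isomorphism in $\mathrm{D}^-(A)$ precisely when a certain strict resolution of $C$ stays strict (exact, with closed images) after $-\hat{\otimes}_A B$, and completed tensor products do not preserve strict exactness in general. Concretely, after building the Koszul/free resolution of $C$ over $A$, I must show each differential still has closed image after applying $\hat{\otimes}_A B$; this is where I expect to invoke, rather than reprove, the transversality machinery of Ben-Bassat--Kremnizer assembled in \cref{sec:BBK}, combined with the Noetherianity and closed-ideal properties of Liu algebras from \cref{prop:Liualgbasic}. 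A secondary subtlety is the G-local reduction on $\Sp B$: one needs that the formation of $\otL_A(-)$ and $\hat\otimes_A(-)$ sheafifies correctly over an affinoid G-covering of $\Sp B$, which should follow from \cref{thm:cohLiu} and \cref{eq:cohloc} in the underived case and from the corresponding descent statement (cf. \cref{thm:Day1}) in the derived case, but must be stated carefully.
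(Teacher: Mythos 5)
There is a genuine gap, and it sits in your base case (b). Transversality of a quotient map is false in general: for an arbitrary finitely generated closed ideal $I\subseteq A$ there is no reason that $(A/I)\otL_A B\rightarrow B/IB$ is an isomorphism — the tensored strict free resolution $F_\bullet\hat{\otimes}_A B$ will have higher homology (already algebraically, $\mathrm{Tor}^A_{>0}(A/I,B)\neq 0$ in general), and the Liu hypothesis (Noetherianity, closed ideals) does nothing to kill it; your step ``identify the tensored resolution with the analogous resolution over $B$'' is exactly the statement to be proved, not an argument for it. The only quotients that legitimately enter are quotients of relative Tate algebras by the specific elements $T-f$ (resp.\ $gT_i-f_i$), whose transversality is proved by showing that multiplication by $T-f$ remains a \emph{strict} monomorphism, and it is precisely there that \cref{prop:Liualgbasic} (closedness of ideals) is used. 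Relatedly, you misread \cite[Proposition~3.13]{MP21}: it embeds the Liu space into an \emph{auxiliary} affinoid space via a locally closed immersion with Runge conditions; it does not factor the domain inclusion $\Sp C\subseteq \Sp A$ through a closed immersion, so no ``closed immersion base case'' is needed — a Liu domain is handled by covering it with rational domains of $\Sp A$ and gluing.

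Two further points. First, your proposed G-localization on $\Sp B$ (reducing to $B$ affinoid, or to ``standard generators'' — note $B$ is an arbitrary Liu $A$-algebra, it is $C$ that is the domain) is both unjustified as written (it needs a derived descent statement along the Tate complex of $B$, whose strict exactness for general coefficients is itself a consequence of this theorem, cf.\ \cref{thm:Tateacyc}) and unnecessary: the paper keeps $B$ arbitrary throughout and localizes only on the domain side. Second, ``invoke rather than reprove the machinery of \cref{sec:BBK}'' is circular relative to the paper: that appendix \emph{is} the proof of \cref{thm:liut}, obtained by redoing the Ben-Bassat--Kremnizer arguments for Liu algebras — Laurent/Weierstrass domains via strictness of $T-f$, rational domains as in \cite[Lemma~5.14]{BBK17}, a Mayer--Vietoris lemma for unions of Liu domains, and then the induction of \cite[Theorem~5.16]{BBK17}. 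Your case (a) is in the right spirit; the proof can be completed by replacing case (b) and the localization on $\Sp B$ with this covering-and-gluing induction on the domain $\Sp C$.
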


\begin{definition}\label{def:qcoh}
Let $A$ be a Liu $k$-algebra. Let $M$ be a transversal Banach $A$-module. Write $X=\Sp A$.
We define a sheaf of $\mathcal{O}_X$-modules $\widetilde{M}$ as the sheafification of the presheaf
\[
\Sp B\mapsto M\hat{\otimes}_A B
\]
on $X$, where $\Sp B$ runs over the set of affinoid domains in $X$. We call $\widetilde{M}$ the sheaf associated to $M$.

An $\mathcal{O}_X$-module $\mathcal{M}$ is \emph{quasi-coherent} if there is a transversal $A$-module $M$ such that $\mathcal{M}=\widetilde{M}$.
\end{definition}

\begin{example}\label{ex:coharequas}
Let $X$ be a Liu $k$-analytic space. Then all coherent sheaves on $X$ are quasi-coherent. See for example \cite[Proof of Proposition~2.6(1)]{MP21}. To be more precise, the same proof shows that for any Liu domain $\Sp B\rightarrow \Sp A=X$, $B$ is a flat $A$-algebra. Let $M$ be a finite $A$-module.
Consider a presentation
\[
A^{\oplus S}\rightarrow A^{\oplus N}\rightarrow M\rightarrow 0\,.
\]
We have a commutative diagram with exact rows:
\[
\begin{tikzcd}
A^{\oplus S}\hat{\otimes}^{\mathbb{L}}_A B \arrow[r] \arrow[d] & A^{\oplus N}\hat{\otimes}^{\mathbb{L}}_A B \arrow[r] \arrow[d] & M\hat{\otimes}^{\mathbb{L}}_A B \arrow[r] \arrow[d] & 0 \\
A^{\oplus S}\otimes_A B \arrow[r]           & A^{\oplus N}\otimes_A B \arrow[r]           & M \otimes_A B \arrow[r]           & 0
\end{tikzcd}\,.
\]
In order to show that $M$ is transversal, it suffices to show that $A$ is, which is obvious.
\end{example}

\begin{theorem}[Tate acyclicity theorem]\label{thm:Tateacyc}
Let $X=\Sp A$ be a Liu $k$-analytic space. Let $\Sp A_1,\ldots,\Sp A_n$ be a finite G-covering of $X$ by Liu domains. Let $M$ be a transversal Banach $A$-module, then the following sequence is admissible and exact
\begin{equation}\label{eq:Tate}
	0\rightarrow M\rightarrow \prod_{i_1} M\hat{\otimes}_A A_{i_1}\rightarrow \prod_{i_1<i_2} M\hat{\otimes}_A A_{i_1}\hat{\otimes}_A A_{i_2}\rightarrow \cdots \rightarrow M\hat{\otimes}_A A_1\hat{\otimes}_A A_2\hat{\otimes}_A \cdots \hat{\otimes}_A A_n\rightarrow 0\,.
\end{equation}
\end{theorem}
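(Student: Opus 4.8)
The plan is to reduce the Tate acyclicity statement for a transversal module over a Liu algebra to the known acyclicity of $\mathcal{O}_X$ on Liu spaces, via the passage between non-derived and derived tensor products afforded by \cref{thm:liut}. Concretely, write $\mathcal{M}=\widetilde{M}$ for the quasi-coherent sheaf attached to $M$ on $X=\Sp A$ (\cref{def:qcoh}). The \v{C}ech complex \eqref{eq:Tate} is, up to the leading term $M$, the \v{C}ech complex $\check{C}^\bullet(\{\Sp A_i\},\mathcal{M})$ for the finite G-covering $\{\Sp A_i\}$. So the assertion is equivalent to: $\check{H}^0=M$ and $\check{H}^{>0}=0$, together with the admissibility (strictness) of the differentials.

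The key steps, in order, are as follows. First I would establish the derived statement: the \v{C}ech complex computes $R\Gamma(X,\mathcal{M})$ and this is concentrated in degree $0$ with value $M$. Here is where \cref{thm:liut} does the work — for each finite intersection $\Sp A_{i_1}\cap\cdots\cap\Sp A_{i_p}$, which is again a Liu domain in $X$ by \cref{cor:intLiu} with coordinate ring $A_{i_1}\hat\otimes_A\cdots\hat\otimes_A A_{i_p}$ (cf. \eqref{eq:h0tensor}), the transversality of $M$ gives $M\hat\otimes_A A_{i_1}\hat\otimes_A\cdots\hat\otimes_A A_{i_p}\cong M\otL_A (A_{i_1}\hat\otimes_A\cdots\hat\otimes_A A_{i_p})$ in $\mathrm{D}^-(A)$, and \cref{thm:liut} (applied iteratively) identifies the latter derived tensor product with $\Gamma$ of $\mathcal{M}$ over that intersection. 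Then the Mayer--Vietoris / \v{C}ech-to-derived-functor spectral sequence, combined with the universal acyclicity of $\mathcal{O}_X$ on the Liu domains $X, \Sp A_i, \ldots$ (\cref{def:Liuspace}(3), \cref{thm:Liuchar}) — more precisely with the vanishing of higher cohomology of $\mathcal{M}$ on each such Liu domain, which follows since $\mathcal{M}$ is built from $A$ by (derived) base change and $A$ itself is acyclic — collapses everything to $H^0(X,\mathcal{M})=M\otL_A A=M$. Finally I would upgrade the derived vanishing to the honest admissible exactness of \eqref{eq:Tate} in $\BanCat_k$: a bounded complex of Banach modules that is exact in the derived category of the quasi-Abelian category $\BanModCat_A$ is strictly exact, i.e. all differentials are admissible and the complex is exact on the nose (this is the standard dictionary between derived categories of quasi-Abelian categories and strict complexes, as in \cite{Sch99}).

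There is an alternative, more hands-on route I would keep in reserve: prove the module case directly from the case $M=A$. When $M=A$, \eqref{eq:Tate} is the \v{C}ech complex of $\mathcal{O}_X$ for the covering, which is admissible exact because $X$ and all the intersections are Liu (hence $\mathcal{O}$-acyclic) and by Tate's theorem in this generality — this is essentially contained in \cite{MP21}. For general transversal $M$, one would like to say \eqref{eq:Tate} is obtained from the $M=A$ complex by applying $M\hat\otimes_A-$; this is not literally true because $\hat\otimes$ is not exact, but transversality is precisely the hypothesis that makes $M\hat\otimes_A-$ agree with $M\otL_A-$ on the Liu domains appearing, so applying $M\otL_A-$ to the strictly exact $M=A$ complex and using \cref{thm:liut} termwise recovers \eqref{eq:Tate} and preserves strict exactness.

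The main obstacle I anticipate is the bookkeeping around strictness/admissibility rather than the vanishing itself: one must check that the identifications $M\hat\otimes_A(A_{i_1}\hat\otimes_A\cdots)\cong M\otL_A(A_{i_1}\hat\otimes_A\cdots)$ are compatible with the \v{C}ech differentials (functoriality of the comparison map in \cref{def:trans}), and that the spectral sequence argument can be run inside the quasi-Abelian framework so that the output is a genuinely admissible exact sequence, not merely an exact sequence of underlying modules. A secondary subtlety is that \cref{thm:liut} is stated for a single Liu domain, so I need the iterated version for multi-fold intersections; this follows by induction on the number of factors, using \cref{cor:intLiu} to see that each partial intersection is again a Liu domain and that $\Sp(A_{i_1}\hat\otimes_A\cdots\hat\otimes_A A_{i_p})\to\Sp(A_{i_1}\hat\otimes_A\cdots\hat\otimes_A A_{i_{p-1}})$ is a Liu domain.
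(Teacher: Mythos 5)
Your primary route is circular in the logical order of this paper. You identify \eqref{eq:Tate} (minus the leading term) with the \v{C}ech complex of the sheaf $\mathcal{M}=\widetilde{M}$ and then use (i) that $\mathcal{M}(\Sp A_{i_1}\cap\cdots\cap\Sp A_{i_p})=M\hat{\otimes}_A A_{i_1}\hat{\otimes}_A\cdots\hat{\otimes}_A A_{i_p}$ (and in particular $H^0(X,\mathcal{M})=M$), and (ii) that $\mathcal{M}$ has vanishing higher cohomology on Liu domains. But $\widetilde{M}$ is defined by sheafification (\cref{def:qcoh}), and (i) and (ii) are exactly \cref{cor:qcohLiusecliudom} and \cref{cor:qcohvan}, which the paper deduces \emph{from} \cref{thm:Tateacyc}; your justification of (ii) (``$\mathcal{M}$ is built from $A$ by derived base change and $A$ is acyclic'') is an assertion rather than an argument, and \cref{thm:liut} only compares $\otL_A$ with $\hat{\otimes}_A$ for Liu algebras --- it says nothing about sections of the sheafified presheaf on an intersection. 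So the spectral-sequence argument cannot be run before the theorem it is meant to prove.

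Your reserve route, however, is essentially the paper's proof: establish \eqref{eq:Tate} for $M=A$, then apply $M\otL_A-$ to that strictly exact complex and use transversality to rewrite each term; note that each multifold intersection is itself a Liu domain in $X$ by \cref{cor:intLiu} with algebra $A_{i_1}\hat{\otimes}_A\cdots\hat{\otimes}_A A_{i_p}$ by \eqref{eq:h0tensor}, so transversality of $M$ applies directly term by term and no iterated version of \cref{thm:liut} is needed. The only real divergence is the case $M=A$: the paper, following \cite{BBK17}, reduces as in the affinoid setting to a Laurent covering $\{A\{f\},A\{f^{-1}\}\}$, where admissible exactness is a direct computation; you instead invoke the $\mathcal{O}$-acyclicity of Liu spaces from \cite{MP21} together with a \v{C}ech argument, which gives exactness of the underlying modules but still leaves admissibility of the differentials to be supplied (over non-trivially valued $k$ by the open mapping theorem applied to the closed kernels, and in the trivially valued case after a base change to $k_r$ using \cref{prop:flatkr}). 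Both treatments of $M=A$ are viable; the paper's reduction has the advantage of producing strictness directly, while yours leans more heavily on the Liu-space cohomology already available. If you promote the reserve route to the main argument and discard the sheaf-theoretic detour, the proof is correct and matches the paper.
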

\begin{proof}
It follows from the same proof as \cite[Lemma~5.34 and Remark~5.35]{BBK17}. We give a sketch for the convenience of the readers. When $M=A$, we can prove \eqref{eq:Tate} exactly as in the affinoid setting, namely it suffices to treat the case where the covering is given by $\{A\{ f\}, A\{ f^{-1}\}\}$ for some $f\in A$. Then the acyclicity follows from a direct computation.  See \cite[Chapter~8]{BGR84} for details. For a general $M$, taking derived tensor product with \eqref{eq:Tate} for $M=A$ and apply the transversality condition, we get \eqref{eq:Tate} for $M$.
\end{proof}

\begin{corollary}\label{cor:qcohLiusecliudom}
Let $X=\Sp A$ be a Liu $k$-analytic space. Let $\mathcal{M}$ be a quasi-coherent sheaf on $X$. Let $M=H^0(X,\mathcal{M})$. Then for any Liu domain $\Sp B$ in $X$, we have
\[
	H^0(\Sp B,\mathcal{M})=M\hat{\otimes}_A B\,.
\]
\end{corollary}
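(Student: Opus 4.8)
The strategy is to deduce the statement from the Tate acyclicity theorem (\cref{thm:Tateacyc}) by first establishing two auxiliary facts.

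The first fact is a general description of the sections of the sheaf $\widetilde{M'}$: for any Liu $k$-algebra $A'$ and any transversal Banach $A'$-module $M'$, one has $H^0(\Sp A',\widetilde{M'})=M'$, and more generally $H^0(\Sp C',\widetilde{M'})=M'\hat{\otimes}_{A'}C'$ for every affinoid domain $\Sp C'$ of $\Sp A'$. To see this, fix an affinoid domain $\Sp C'$ of $\Sp A'$ and a finite G-covering $\{\Sp C'_l\}$ of it by affinoid domains. Granting that $M'\hat{\otimes}_{A'}C'$ is again transversal over $C'$ (this is the second fact below), \cref{thm:Tateacyc} applied to the Liu space $\Sp C'$ and this module produces an admissible exact sequence as in \eqref{eq:Tate}; its left-exact part says precisely that the presheaf $\Sp C'\mapsto M'\hat{\otimes}_{A'}C'$ on the affinoid domains of $\Sp A'$ satisfies the sheaf condition, hence is a sheaf, so its sheafification $\widetilde{M'}$ has the same sections on every affinoid domain. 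Choosing a finite G-covering $\{\Sp A'_i\}$ of $\Sp A'$ itself by affinoid domains — whose pairwise intersections are again affinoid, with coordinate rings $A'_i\hat{\otimes}_{A'}A'_j$, since $\Sp A'$ is separated — the sheaf property of $\widetilde{M'}$ and \cref{thm:Tateacyc} (now for $\Sp A'$ and $M'$) identify $H^0(\Sp A',\widetilde{M'})$ with the equalizer of $\prod_iM'\hat{\otimes}_{A'}A'_i\rightrightarrows\prod_{i,j}M'\hat{\otimes}_{A'}A'_i\hat{\otimes}_{A'}A'_j$, namely with $M'$.

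The second fact is that transversality propagates along homotopy epimorphisms: if $A'\to B'$ is a homotopy epimorphism of Liu $k$-algebras and $M'$ is transversal over $A'$, then $M'\hat{\otimes}_{A'}B'$ is transversal over $B'$. Let $B'\to D'$ be any homotopy epimorphism, so $\Sp D'$ is a Liu domain of $\Sp B'$. Composing domain embeddings, $\Sp D'\hookrightarrow\Sp B'\hookrightarrow\Sp A'$ exhibits $\Sp D'$ as an analytic domain of $\Sp A'$, which is Liu, hence a Liu domain of $\Sp A'$; thus $A'\to D'$ is a homotopy epimorphism as well. Using transversality of $M'$ over $A'$ — so that $M'\otL_{A'}B'\simeq M'\hat{\otimes}_{A'}B'$ and $M'\otL_{A'}D'\simeq M'\hat{\otimes}_{A'}D'$ — together with associativity and base change of $\otL$ in the quasi-abelian category of Banach modules,
\[
(M'\hat{\otimes}_{A'}B')\otL_{B'}D'\simeq(M'\otL_{A'}B')\otL_{B'}D'\simeq M'\otL_{A'}D'\simeq M'\hat{\otimes}_{A'}D'=(M'\hat{\otimes}_{A'}B')\hat{\otimes}_{B'}D'\,,
\]
which is the desired transversality.

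Finally I assemble the argument. Write $\mathcal{M}=\widetilde{N}$ with $N$ transversal over $A$ (\cref{def:qcoh}). The first fact for $\Sp A$ gives $M=H^0(X,\mathcal{M})=N$ and $H^0(\Sp C,\mathcal{M})=M\hat{\otimes}_AC$ for every affinoid domain $\Sp C$ of $X$. Let $\Sp B$ be a Liu domain of $X$, so $A\to B$ is a homotopy epimorphism; by the second fact $M\hat{\otimes}_AB$ is transversal over $B$, and $\widetilde{M\hat{\otimes}_AB}$ is a sheaf on $\Sp B$ whose sections over an affinoid domain $\Sp C$ of $\Sp B$ are $(M\hat{\otimes}_AB)\hat{\otimes}_BC=M\hat{\otimes}_AC$ — which equals $H^0(\Sp C,\mathcal{M})$, since $\Sp C$ is also an affinoid domain of $X$. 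Thus $\mathcal{M}|_{\Sp B}$ and $\widetilde{M\hat{\otimes}_AB}$ agree, compatibly with restrictions, on the basis of affinoid domains of $\Sp B$, hence coincide; applying the first fact once more to $B$ and $M\hat{\otimes}_AB$ gives $H^0(\Sp B,\mathcal{M})=H^0(\Sp B,\widetilde{M\hat{\otimes}_AB})=M\hat{\otimes}_AB$. I expect the second fact to be the crux: it is what legitimizes every use of \cref{thm:Tateacyc} above, and it forces some care with the rings over which the various completed and derived tensor products are formed, as well as with the transitivity of (Liu) analytic domains in Berkovich geometry.
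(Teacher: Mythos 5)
Your proof is correct and follows the route the paper intends: the corollary is stated as a direct consequence of \cref{thm:Tateacyc}, and your argument is the fleshed-out version of exactly that deduction (sheaf condition from the Tate complex, hence sheafification does not change sections on affinoid and Liu domains). The auxiliary fact you rightly identify as the crux --- that transversality propagates along homotopy epimorphisms via transitivity of Liu domains, so that $M\hat{\otimes}_A B$ is transversal over $B$ and \cref{thm:Tateacyc} may legitimately be applied on $\Sp B$ and its affinoid subdomains --- is the same claim the paper establishes (following \cite[Lemma~4.48]{BBK17}) in the proofs of \cref{thm:qcohpush} and \cref{thm:pullqcoh}, so your argument is fully consistent with the paper's toolkit.
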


\begin{corollary}\label{cor:qcohvan}
Let $X=\Sp A$ be a Liu $k$-analytic space. Let $\mathcal{M}$ be a quasi-coherent sheaf on $X$. Then 
\[
	H^i(X,\mathcal{M})=0\,,\quad i>0\,.
\]
\end{corollary}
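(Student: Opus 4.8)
The goal is to prove that a quasi-coherent sheaf $\mathcal{M}=\widetilde{M}$ on a Liu $k$-analytic space $X=\Sp A$ has vanishing higher cohomology, where $M=H^0(X,\mathcal{M})$ is a transversal Banach $A$-module. The natural strategy is to compute cohomology via Čech theory with respect to a finite affinoid (or Liu-domain) G-covering, and to use the Tate acyclicity theorem (\cref{thm:Tateacyc}) to show the Čech complex is exact in positive degrees.

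First I would recall that on a quasi-compact separated $k$-analytic space, sheaf cohomology on the G-site can be computed by Čech cohomology with respect to a finite covering by affinoid domains, provided the covering is suitably fine — this is the standard Leray/Cartan–Leray argument, using that finite intersections of affinoid domains are again affinoid and that one can refine. Concretely, choose a finite G-covering $\{\Sp A_i\}$ of $X$ by affinoid domains (such a covering exists since $X$ is Liu, hence quasi-compact); since $X$ is separated, all the intersections $\Sp A_{i_1}\cap\cdots\cap\Sp A_{i_p}$ are affinoid domains, and by \cref{cor:qcohLiusecliudom} (with affinoid, hence Liu, domains) we have $H^0(\Sp A_{i_1}\cap\cdots\cap\Sp A_{i_p},\mathcal{M}) = M\hat{\otimes}_A A_{i_1}\hat{\otimes}_A\cdots\hat{\otimes}_A A_{i_p}$. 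Thus the Čech complex of $\mathcal{M}$ with respect to this covering is exactly the complex \eqref{eq:Tate} from \cref{thm:Tateacyc}, which is admissible and exact; hence the Čech cohomology of $\mathcal{M}$ with respect to $\{\Sp A_i\}$ vanishes in positive degrees, and moreover the same holds after restricting to any affinoid domain of $X$ and using its induced covering, since restrictions of transversal modules behave well (again by \cref{cor:qcohLiusecliudom} and the fact that affinoid restrictions of a quasi-coherent sheaf are quasi-coherent).

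To pass from vanishing of Čech cohomology for a single covering to vanishing of genuine sheaf cohomology, I would invoke the Cartan–Leray spectral sequence (or the Čech-to-derived-functor spectral sequence) together with the observation that the above vanishing holds not just for $X$ but for every affinoid domain of $X$ with its induced finite affinoid G-covering: this gives that $H^q(\mathcal{U},\mathcal{M})=0$ for all $q>0$ and all affinoid domains $\mathcal{U}$, which by the standard argument (e.g.\ \cite[\href{https://stacks.math.columbia.edu/tag/01EW}{Tag 01EW}]{stacks-project}) upgrades to $H^q(X,\mathcal{M})=0$ for $q>0$. Alternatively, one can argue directly: the presheaf $\mathcal{U}\mapsto M\hat{\otimes}_A H^0(\mathcal{U},\mathcal{O}_X)$ on affinoid domains is already a sheaf by \cref{thm:Tateacyc}, it agrees with $\mathcal{M}$, and its Čech cohomology vanishes on every covering that can be refined to one of the $\{A\{f\},A\{f^{-1}\}\}$-type, so a devissage over such coverings (as in the proof of Tate acyclicity in the affinoid case) yields the claim.

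The main obstacle is the bookkeeping around the G-topology: one must be careful that Berkovich's G-site admits the kind of finite affinoid covering and refinement needed for Čech cohomology to compute derived-functor cohomology, and that separatedness is genuinely used to ensure all finite intersections of affinoid domains in the covering are again affinoid (so that \cref{thm:Tateacyc} applies verbatim to the Čech complex). A secondary point requiring care is that the restriction of $\mathcal{M}$ to an affinoid domain $\Sp B$ is the quasi-coherent sheaf associated to the transversal $B$-module $M\hat{\otimes}_A B$ — this is exactly what \cref{cor:qcohLiusecliudom} provides, so the argument is uniform over all affinoid domains and the Cartan–Leray machinery goes through. Once these foundational points are in place, the cohomology vanishing is a formal consequence of \cref{thm:Tateacyc}.
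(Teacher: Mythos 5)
Your argument is correct and is essentially the paper's own proof, which likewise combines the Tate acyclicity theorem (\cref{thm:Tateacyc}) with the \v{C}ech-vanishing criterion of \cite[\href{https://stacks.math.columbia.edu/tag/01EW}{Tag 01EW}]{stacks-project}; the only cosmetic remark is that the transversality of $M\hat{\otimes}_A B$ as a $B$-module, needed to apply \cref{thm:Tateacyc} on subdomains, comes from the computation in the proof of \cref{thm:pullqcoh} rather than from \cref{cor:qcohLiusecliudom} itself, but this is a routine point.
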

\begin{proof}
This follows from \cite[\href{https://stacks.math.columbia.edu/tag/01EW}{Tag 01EW}]{stacks-project}\footnote{This result is only stated for a ringed space, but it is easy to check that the proof works in the current situation as well.} and \cref{thm:Tateacyc}.
\end{proof}

\begin{definition}
	Let $X$ be a $k$-analytic space. Let $\mathcal{F}$ be a sheaf of $\mathcal{O}_X$-modules (resp. $\mathcal{O}_X$-algebras). A \emph{Banach structure} on $\mathcal{F}$ is the following data: given any Liu domain $\Sp A$ in $X$, $\mathcal{F}(\Sp A)$ is topologized so that it forms a Banach $A$-module (resp. Banach $A$-algebra). We assume that the following condition holds: if $\Sp A$, $\Sp B$ are Liu domains in $X$ such that $\Sp A$ is an analytic domain of $\Sp B$, then the natural morphism of $A$-modules (resp. $A$-algebras) $\mathcal{F}(\Sp B)\hat{\otimes}_B A\rightarrow \mathcal{F}(\Sp A)$ is bounded.

	An $\mathcal{O}_X$-module(resp. $\mathcal{O}_X$-algebra) with a given Banach structure is called a \emph{sheaf of Banach modules} (resp. \emph{sheaf of Banach algebras}) on $X$.

	A morphism $\mathcal{F}\rightarrow \mathcal{G}$ of sheaves of Banach modules (resp. sheaves of Banach algebras) on $X$ is a morphism of the underlying sheaves of modules (resp. sheaves of algebras) such that for each Liu domain $\Sp B$ in $X$, $\mathcal{F}(\Sp B)\rightarrow \mathcal{G}(\Sp B)$ is bounded.

	The category of sheaves of Banach modules on $X$ is denoted by $\BanModCat_X$.
\end{definition}

\begin{proposition}\label{prop:morcohtosheaf}
Let $X=\Sp A$ be a Liu $k$-analytic space. Let $\mathcal{M}$ be a quasi-coherent sheaf on $X$. Let $M=H^0(X,\mathcal{M})$. Let $\mathcal{F}$ be a sheaf of Banach $\mathcal{O}_X$-modules. Then
\[
	\Hom_{\BanModCat_X}(\mathcal{M},\mathcal{F})=\Hom_{\BanModCat_A}(M,H^0(X,\mathcal{F}))\,.
\]
\end{proposition}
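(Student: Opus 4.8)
The plan is to construct the natural map in both directions and check they are mutually inverse, using the description of $\mathcal{M}$ as the sheaf associated to the transversal $A$-module $M$ together with the computation of sections over Liu domains from \cref{cor:qcohLiusecliudom}. First I would define the forward map: given a morphism $\varphi:\mathcal{M}\rightarrow \mathcal{F}$ in $\BanModCat_X$, take global sections to get a bounded $A$-linear map $H^0(X,\varphi):M\rightarrow H^0(X,\mathcal{F})$; this is clearly functorial. Conversely, given a bounded $A$-linear map $u:M\rightarrow H^0(X,\mathcal{F})$, I would build a morphism of sheaves by specifying it on the basis of Liu domains: for a Liu domain $\Sp B\subseteq X$, use $M\hat{\otimes}_A B \xrightarrow{u\hat{\otimes}\mathrm{id}} H^0(X,\mathcal{F})\hat{\otimes}_A B\rightarrow \mathcal{F}(\Sp B)$, where the last arrow is the structure map of the Banach structure on $\mathcal{F}$; by \cref{cor:qcohLiusecliudom} the left-hand side is exactly $\mathcal{M}(\Sp B)$. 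One must check these maps are compatible with restriction to smaller Liu domains — this follows from the compatibility built into the definition of a Banach structure and the associativity of $\hat{\otimes}$ — and hence glue to a morphism of sheaves of $\mathcal{O}_X$-modules; boundedness on each $\mathcal{M}(\Sp B)$ is built in, so we get a morphism in $\BanModCat_X$.

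Next I would verify the two constructions are inverse to each other. Starting from $u:M\rightarrow H^0(X,\mathcal{F})$, the associated sheaf morphism evaluated on $X=\Sp A$ itself (taking $B=A$) recovers $u$, since $M\hat{\otimes}_A A = M$. Starting from $\varphi:\mathcal{M}\rightarrow\mathcal{F}$, one needs that $\varphi$ is determined by its effect on global sections; this is where quasi-coherence of $\mathcal{M}$ is essential. Indeed, for any Liu domain $\Sp B$, the square relating $\mathcal{M}(X)\rightarrow\mathcal{M}(\Sp B)$, $\mathcal{F}(X)\rightarrow\mathcal{F}(\Sp B)$ and $\varphi$ commutes, and $\mathcal{M}(X)\hat{\otimes}_A B\rightarrow \mathcal{M}(\Sp B)$ is an isomorphism (\cref{cor:qcohLiusecliudom}); hence $\varphi_{\Sp B}$ is forced to equal $(H^0(X,\varphi))\hat{\otimes}_A B$ followed by the $\mathcal{F}$-structure map, which is exactly the sheaf morphism reconstructed from $H^0(X,\varphi)$. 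Since Liu domains (in fact affinoid domains suffice, as in \cref{def:qcoh}) form a basis for the G-topology of $X$, agreement on this basis gives $\varphi$ back.

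The main obstacle I expect is the bookkeeping around the sheafification in \cref{def:qcoh}: the presheaf $\Sp B\mapsto M\hat{\otimes}_A B$ need not \emph{a priori} be a sheaf, so strictly speaking a morphism out of $\mathcal{M}=\widetilde{M}$ is a morphism out of the sheafification, and one must argue that specifying a compatible family of bounded maps on the presheaf level (i.e.\ on the basis of affinoid, or Liu, domains) is the same as specifying a morphism of sheaves into $\mathcal{F}$ — this is where \cref{thm:Tateacyc}, guaranteeing the Čech–sheaf property of $\widetilde M$ on Liu domains, is used, together with the observation that $\mathcal{F}$ is already a sheaf. A related subtlety is that $H^0(X,\mathcal{M})=M$ (rather than its sheafification having larger global sections), which again follows from \cref{thm:Tateacyc} / \cref{cor:qcohLiusecliudom} applied to the trivial covering. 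Once these identifications are in place, everything else is formal diagram-chasing using associativity of the completed tensor product and the transversality hypothesis that makes $M\hat\otimes_A B$ behave well; I would present the argument by first recording the bijection at the level of the presheaf basis and then invoking the sheaf property.
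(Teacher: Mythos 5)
Your proposal is correct and follows essentially the same route as the paper: take global sections in one direction, and in the other build the sheaf morphism on the basis of affinoid/Liu domains via $M\hat{\otimes}_A B\rightarrow H^0(\Sp B,\mathcal{F})$ induced by $M\rightarrow H^0(X,\mathcal{F})\rightarrow H^0(\Sp B,\mathcal{F})$, then check the two constructions are mutually inverse using \cref{cor:qcohLiusecliudom}. The paper leaves the inverse verification and the sheafification bookkeeping as "easy to verify," which you have simply spelled out.
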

\begin{proof}
Given a morphism $f:\mathcal{M}\rightarrow \mathcal{F}$ in $\BanModCat_X$, by taking global sections, we get $H^0(f):M\rightarrow H^0(X,\mathcal{F})$. Conversely, given a bounded homomorphism $F:M\rightarrow H^0(X,\mathcal{F})$, we construct the morphism of sheaves $f:\mathcal{M}\rightarrow \mathcal{F}$ as follows: for any affinoid domain $\Sp B$ in $X$, define $f(\Sp B):M\hat{\otimes}_A B\rightarrow H^0(\Sp B,\mathcal{F})$ as the natural homomorphism of Banach $B$-modules induced by the homomorphism of Banach $A$-modules: 
\[
	M\stackrel{F}{\longrightarrow} H^0(X,\mathcal{F})\rightarrow H^0(\Sp B,\mathcal{F})\,.
\]
By the obvious functoriality, this is a morphism of Banach $\mathcal{O}_X$-modules.
It is easy to verify that these maps are inverse to each other.
\end{proof}

\begin{theorem}\label{thm:qcohpush}
Let $f:\Sp B\rightarrow \Sp A$ be a morphism in $\LiuCat_k$. Let $\mathcal{M}$ be a quasi-coherent sheaf on $\Sp B$. Then $f_*\mathcal{M}$ is a quasi-coherent sheaf on $\Sp A$ associated to the transversal $A$-module $H^0(\Sp B,\mathcal{M})$.
\end{theorem}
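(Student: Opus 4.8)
The plan is to reduce the statement to the computation of sections over Liu domains, using \cref{cor:qcohLiusecliudom} and the characterization of quasi-coherence in terms of transversal modules. First I would set $N := H^0(\Sp B, \mathcal{M})$, which is a transversal $B$-module by definition of quasi-coherence, so that $\mathcal{M} = \widetilde{N}$. I then need to check two things: that the $A$-module $M := H^0(\Sp A, f_*\mathcal{M}) = H^0(\Sp B, \mathcal{M}) = N$ (viewed as an $A$-module via $f$) is transversal as an $A$-module, and that $f_*\mathcal{M}$ coincides with the sheaf $\widetilde{M}$ associated to it on $\Sp A$.

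For transversality of $M$ over $A$: let $A \to A'$ be a homotopy epimorphism to a Liu $k$-algebra $A'$, so $\Sp A'$ is a Liu domain in $\Sp A$. By \cref{prop:LiuCart} (or \cref{cor:intLiu} and base change), $\Sp B' := \Sp B \times_{\Sp A} \Sp A'$ is again a Liu domain in $\Sp B$, with coordinate ring $B' = B\hat{\otimes}_A A'$, and $B\to B'$ is a homotopy epimorphism. Since $N$ is transversal over $B$, the natural map $N \otL_B B' \to N\hat{\otimes}_B B'$ is an isomorphism. I then want to rewrite $N\otL_A A'$ in terms of this: using \cref{thm:liut} applied to $A\to A'$ and $A\to B$ (so that $B\otL_A A' \to B\hat{\otimes}_A A' = B'$ is an isomorphism), together with the base-change/associativity formula $N\otL_A A' \cong N\otL_B (B\otL_A A') \cong N\otL_B B'$, I can identify $N\otL_A A'$ with $N\otL_B B' \cong N\hat{\otimes}_B B' = N\hat{\otimes}_B(B\hat{\otimes}_A A') = N\hat{\otimes}_A A' = M\hat{\otimes}_A A'$. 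Hence $M$ is transversal over $A$.

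For the identification $f_*\mathcal{M} = \widetilde{M}$: both are sheaves of $\mathcal{O}_{\Sp A}$-modules on $\Sp A$, and it suffices to check they agree on affinoid domains $\Sp A'$ of $\Sp A$ (which form a basis of the G-topology). On such a domain, $(f_*\mathcal{M})(\Sp A') = \mathcal{M}(f^{-1}\Sp A') = H^0(\Sp B', \mathcal{M})$ where $\Sp B' = \Sp B\times_{\Sp A}\Sp A'$, and by \cref{cor:qcohLiusecliudom} this equals $N\hat{\otimes}_B B' = N\hat{\otimes}_A A' = M\hat{\otimes}_A A' = \widetilde{M}(\Sp A')$, compatibly with restriction maps. (Note $\Sp A'$ is a Liu domain since affinoid spaces are Liu, and $\Sp B'$ is a Liu domain in $\Sp B$, so \cref{cor:qcohLiusecliudom} applies.) This gives the claimed equality of sheaves.

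The main obstacle I expect is the base-change manipulation in the second paragraph: one has to be careful that the associativity isomorphism $N\otL_A A' \cong N\otL_B(B\otL_A A')$ for derived completed tensor products holds in $\mathrm{D}^-(\BanModCat_A)$, and that \cref{thm:liut} legitimately supplies $B\otL_A A'\simeq B\hat\otimes_A A'$ so that the derived base along $B\to B'$ matches the underived one. This is the kind of coherence statement that is formal in an abstract derived symmetric monoidal setting but needs the quasi-abelian formalism of \cite{BBK17} to be invoked cleanly; everything else is a direct application of \cref{cor:qcohLiusecliudom} and checking on a basis of affinoid domains.
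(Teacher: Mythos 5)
Your proposal is correct and takes essentially the same route as the paper's own proof: the transversality of $H^0(\Sp B,\mathcal{M})$ over $A$ is established by the identical chain $N\otL_A A'\cong N\otL_B(B\otL_A A')\cong N\otL_B(B\hat{\otimes}_A A')\cong N\hat{\otimes}_B(B\hat{\otimes}_A A')\cong N\hat{\otimes}_A A'$ using \cref{thm:liut} and transversality over $B$, and the identification $f_*\mathcal{M}=\widetilde{M}$ is checked on affinoid domains via \cref{cor:qcohLiusecliudom}, noting that $f^{-1}\Sp A'=\Sp(B\hat{\otimes}_A A')$ is a Liu domain in $\Sp B$ (the paper invokes \cref{lma:Liutensor} where you invoke \cref{prop:LiuCart}, which amounts to the same point).
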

\begin{proof}
Let $F:A\rightarrow B$ be the corresponding homomorphism of Liu $k$-algebras. Let $M=H^0(\Sp B,\mathcal{M})$. 
We claim that $M$ is transversal as Banach $A$-module.

This is proved in \cite[Lemma~4.48]{BBK17}, we reproduce the argument: let $\Sp D\rightarrow \Sp A$ be a Liu domain. We need to show that
\[
M\otL_A D=M\hat{\otimes}_A D\,.
\]
Observe that
\[
M\otL_A D=M\otL_B (B\otL_A D)=M\otL_B (B \hat{\otimes}_A D)=M \hat{\otimes}_B (B \hat{\otimes}_A D)=M\hat{\otimes}_A D\,,
\]
where for the second equality, we have applied \cref{thm:liut}; for the third we used \cref{lma:Liutensor} and the transversality of $M$. This concludes the claim.

In order to prove the theorem, it suffices to show $\widetilde{M^A}=f_*\mathcal{M}$. Here $M^A$ is $M$ regarded as a Banach $A$-module. To prove this, it suffices to take an affinoid domain $\Sp C$ in $\Sp A$ and show that
\begin{equation}\label{eq:qcohlocal}
	M\hat{\otimes}_A C= \mathcal{M}(f^{-1}\Sp C)\,.
\end{equation}
By \cref{lma:Liutensor}, $f^{-1}\Sp C$ is a Liu domain in $\Sp B$ and $f^{-1}\Sp C=\Sp (B\hat{\otimes}_A C)$.
Hence 
\eqref{eq:qcohlocal} follows from \cref{cor:qcohLiusecliudom}.
\end{proof}

\begin{lemma}\label{lma:trans}
Let $A$ be a Liu $k$-algebra. Consider an admissible exact sequence
\[
	0\rightarrow F\rightarrow G \rightarrow H
\] 
in $\BanModCat_A$.
Assume that $G$, $H$ are both transversal, then so is $F$.
\end{lemma}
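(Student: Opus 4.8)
The plan is to recast transversality as a vanishing statement for higher derived completed tensor products and then run the long exact sequence of that derived functor. For a Banach $A$-module $M$ and a homotopy epimorphism $A\to B$, put $\mathrm{Tor}^A_i(M,B):=H^{-i}(M\otL_A B)$. Since the bifunctor $\hat{\otimes}_A$ is right exact, for every $M$ we have $H^0(M\otL_A B)=M\hat{\otimes}_A B$ and the canonical arrow $M\otL_A B\to M\hat{\otimes}_A B$ is the truncation $\tau^{\ge 0}$; hence $M$ is transversal to $A\to B$ if and only if $\mathrm{Tor}^A_i(M,B)=0$ for every $i\ge 1$. So I would fix an arbitrary homotopy epimorphism $A\to B$ — equivalently, a Liu domain $\Sp B$ in $\Sp A$ — and reduce the lemma to the assertion that $\mathrm{Tor}^A_i(F,B)=0$ for all $i\ge 1$.

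Next I would use that the admissible exact sequence $0\to F\to G\to H$, read as a short exact sequence, is a conflation for the exact structure on the quasi-abelian category $\BanModCat_A$, so that applying the left-derived functor $-\otL_A B$ produces a distinguished triangle $F\otL_A B\to G\otL_A B\to H\otL_A B\xrightarrow{+1}$ in $\mathrm{D}^-(A)$; this is the same kind of manipulation used in \cite{BBK17} and in the proof of \cref{thm:Tateacyc}. Taking cohomology then yields, for each $i$, the exact segment
\[
	\mathrm{Tor}^A_{i+1}(H,B)\longrightarrow \mathrm{Tor}^A_i(F,B)\longrightarrow \mathrm{Tor}^A_i(G,B).
\]
For $i\ge 1$ both end terms vanish: $\mathrm{Tor}^A_{i+1}(H,B)=0$ because $H$ is transversal and $i+1\ge 1$, and $\mathrm{Tor}^A_i(G,B)=0$ because $G$ is transversal and $i\ge 1$. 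I would conclude that $\mathrm{Tor}^A_i(F,B)=0$ for all $i\ge 1$, so $F$ is transversal to $A\to B$; since $A\to B$ was arbitrary, $F$ is transversal.

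I do not expect a genuine obstacle: the proof is essentially a single application of a long exact sequence. The two points that require care are (i) the identification of transversality with the vanishing of the higher $\mathrm{Tor}^A_i(-,B)$, which rests only on right exactness of $\hat{\otimes}_A$; and (ii) the claim that an admissible short exact sequence of Banach $A$-modules is carried to a distinguished triangle by $-\otL_A B$. For (ii) one should confirm that $-\otL_A B$ is well defined on $\mathrm{D}^-(A)$ and agrees with the left-derived functor of $\hat{\otimes}_A B$, which is legitimate because $\BanCat_k$, hence $\BanModCat_A$, has enough projectives and projectives are flat, as recalled in \cref{sec:pre}.
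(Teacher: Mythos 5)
Your argument does not prove the stated lemma, because you have silently strengthened the hypothesis. The sequence $0\rightarrow F\rightarrow G\rightarrow H$ in \cref{lma:trans} is only assumed to be admissibly exact at $F$ and at $G$: it is a left exact sequence, and $G\rightarrow H$ is \emph{not} assumed to be an admissible epimorphism. This is not a cosmetic point: in the place where the lemma is applied (\cref{cor:pushqcoh}) the sequence is the \v{C}ech-type sequence $0\rightarrow\mathcal{F}(Y)\rightarrow\prod_i\mathcal{F}(U_i)\rightarrow\prod_{i,j,k}\mathcal{F}(U_{ijk})$, whose last arrow is not surjective in general. Consequently there is no conflation $0\rightarrow F\rightarrow G\rightarrow H\rightarrow 0$, hence no distinguished triangle $F\otL_A B\rightarrow G\otL_A B\rightarrow H\otL_A B\xrightarrow{+1}$, and the long exact sequence you invoke does not exist. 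If you try to repair this by factoring $G\rightarrow H$ through its strict image $C$, so that $0\rightarrow F\rightarrow G\rightarrow C\rightarrow 0$ is a conflation, the long exact sequence only gives $\mathrm{Tor}^A_i(F,B)\cong\mathrm{Tor}^A_{i+1}(C,B)$ for $i\geq 1$, and neither $C$ nor the cokernel of $G\rightarrow H$ is known to be transversal. The statement ``a strict kernel of a morphism between transversal modules is transversal'' is not a triangle-formalism triviality: its naive algebraic analogue (a kernel of a map between modules with vanishing higher Tor against a fixed $B$ again has vanishing higher Tor against $B$) is false, as first syzygy modules of a regular sequence tested against the residue field show. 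So any correct argument has to exploit the left-exact structure or the particular nature of the homotopy epimorphisms $A\rightarrow B$ arising from Liu domains, not just the hypotheses you used.

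A second, smaller issue is your step (i): in the quasi-abelian setting, ``$M$ transversal to $A\rightarrow B$ iff $\mathrm{Tor}^A_i(M,B)=0$ for all $i\geq1$'' is not a consequence of right exactness alone. The zeroth cohomology of $M\otL_A B$ is taken in the left heart, and it coincides with $M\hat{\otimes}_A B$ only when the relevant differential of the tensored projective resolution is strict; without that strictness the left-heart cokernel is a genuinely different object that merely surjects onto $M\hat{\otimes}_A B$. This degree-zero strictness is in fact where much of the content of \cref{def:trans} lies, so even in the short-exact-sequence case your argument is incomplete at $i=0$. For comparison, the paper dismisses the lemma with the single sentence ``This is clear by definition,'' so there is no detailed argument of the author's to measure yours against; but as written your proof establishes at best a statement about admissible short exact sequences, which is neither the statement of \cref{lma:trans} nor the case needed in \cref{cor:pushqcoh}.
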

This is clear by definition.

\begin{corollary}\label{cor:pushqcoh}
Let $f:Y\rightarrow X$ be a quasi-compact and quasi-separated morphism of $k$-analytic spaces. Assume that $X=\Sp A$ is Liu. Let $\mathcal{F}$ be a Banach sheaf of $\mathcal{O}_Y$-modules such that for each affinoid domain $\Sp C$ in $Y$, $\mathcal{F}|_{\Sp C}$ is quasi-coherent. 
Then $f_*\mathcal{F}$ is quasi-coherent on $X$.
\end{corollary}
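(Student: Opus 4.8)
The plan is to reduce to the case where $Y$ is quasi-compact by working G-locally on $X$, then to an affinoid cover of $Y$, and finally to invoke \cref{thm:qcohpush}. First I would use \cref{prop:morcohtosheaf} (or rather the general principle behind \cref{def:qcoh}) to recast the problem: since $X = \Sp A$ is Liu and separated, to show $f_*\mathcal{F}$ is quasi-coherent it suffices to exhibit a transversal Banach $A$-module $N$ with $\widetilde{N} = f_*\mathcal{F}$, and the natural candidate is $N = H^0(Y,\mathcal{F})$, topologized as a Banach $A$-module via the constructions below. Concretely, for each affinoid (equivalently, Liu) domain $\Sp C$ in $X$, we must check $H^0(f^{-1}\Sp C, \mathcal{F}) = N \hat{\otimes}_A C$ and that $N$ is transversal.

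Next I would exploit quasi-compactness and quasi-separatedness of $f$. Pick a finite covering of $Y$ by affinoid domains $\Sp B_1,\dots,\Sp B_n$; by quasi-separatedness the pairwise intersections $\Sp B_i \cap \Sp B_j$ are again quasi-compact, hence covered by finitely many affinoid domains $\Sp B_{ij}^{(\ell)}$. Then $H^0(Y,\mathcal{F})$ is the equalizer of
\[
	\prod_i H^0(\Sp B_i,\mathcal{F}) \rightrightarrows \prod_{i,j,\ell} H^0(\Sp B_{ij}^{(\ell)},\mathcal{F})\,.
\]
Each $\Sp B_i$ is a Liu space and $\mathcal{F}|_{\Sp B_i}$ is quasi-coherent by hypothesis, so $H^0(\Sp B_i,\mathcal{F})$ is a transversal Banach $H^0(\Sp B_i,\mathcal{O})$-module, and a fortiori a transversal Banach $A$-module via the structure map $A \to H^0(\Sp B_i,\mathcal{O})$ (by the argument of \cref{thm:qcohpush}: transversality over $A$ follows from transversality over $B_i$ together with \cref{thm:liut} and \cref{lma:Liutensor}). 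The same applies to each $\Sp B_{ij}^{(\ell)}$. Thus $N$ is a kernel of a map between transversal $A$-modules sitting in an admissible exact sequence, so by \cref{lma:trans} $N$ is transversal, provided I verify the relevant sequence is admissible exact — this requires knowing the equalizer presentation above is one of Banach modules with admissible maps, which follows from the sheaf axiom combined with the Banach structure hypothesis on $\mathcal{F}$ and the open mapping theorem for Banach modules over Liu algebras.

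Finally, to compute sections over a Liu domain $\Sp C \subseteq X$: since $\hat{\otimes}_A C$ is exact on transversal modules (that is the content of \cref{thm:liut} applied to the Liu domain $\Sp C \to \Sp A$, giving $-\otL_A C = -\hat{\otimes}_A C$ on the relevant modules), applying $\hat{\otimes}_A C$ to the equalizer sequence for $N$ yields the equalizer sequence for $f^{-1}\Sp C$ relative to its induced affinoid cover $\{\Sp(B_i \hat{\otimes}_A C)\}$; by \cref{lma:Liutensor} these are Liu domains in $Y$ with $f^{-1}\Sp C = \Sp(B_i\hat{\otimes}_A C)$ on each piece, and by \cref{cor:qcohLiusecliudom} the sections of $\mathcal{F}$ over them are the appropriate completed tensor products. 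Comparing with the sheaf axiom for $\mathcal{F}$ over $f^{-1}\Sp C$ gives $H^0(f^{-1}\Sp C,\mathcal{F}) = N\hat{\otimes}_A C$, hence $\widetilde{N} = f_*\mathcal{F}$. I expect the main obstacle to be the bookkeeping with the Banach structures: making sure that all the maps in the Čech-type equalizer diagrams are admissible (so that \cref{lma:trans} and the flatness of completed tensor product apply), and that the two Banach $A$-module structures in play — the one from $N$ being a submodule of $\prod_i H^0(\Sp B_i,\mathcal{F})$ and the one needed to identify $\widetilde{N}$ with $f_*\mathcal{F}$ — genuinely coincide. The quasi-separatedness hypothesis is exactly what is needed to keep these intersections quasi-compact so that only finite products appear and admissibility is not an issue at the limit.
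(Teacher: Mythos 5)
Your proposal is correct and follows essentially the same route as the paper: a finite affinoid cover of $Y$ (with finite affinoid covers of the pairwise intersections via quasi-separatedness), transversality over $A$ of the sections on each piece via \cref{thm:qcohpush}, transversality of $H^0(Y,\mathcal{F})$ via the Čech equalizer and \cref{lma:trans}, then applying $\hat{\otimes}_A C$ and \cref{cor:qcohLiusecliudom} to identify $\widetilde{H^0(Y,\mathcal{F})}$ with $f_*\mathcal{F}$. The admissibility worry you flag is harmless, since the global sections form a kernel in the quasi-Abelian category of Banach modules and are therefore automatically an admissible monomorphism.
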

\begin{proof}
Let $\{U_i=\Sp B_i\}$ be a finite affinoid covering of $Y$. For each $i,j$, let $U_{ij}=U_i\cap U_j$, take a finite affinoid covering $\{U_{ijk}\}$ of $U_{ij}$. Let $f_i$ (resp. $f_{ijk}$) be the restriction of $f$ to $U_i$ (resp. $U_{ijk}$). Then $f_{i*}\mathcal{F}$ (resp. $f_{ijk*}\mathcal{F}$) is the quasi-coherent sheaf associated to $\mathcal{F}(U_i)$ (resp. $\mathcal{F}(U_{ijk})$) by \cref{thm:qcohpush}. In particular, $\mathcal{F}(U_i)$ (resp. $\mathcal{F}(U_{ijk})$) is a transversal Banach $A$-module. 

There is an admissible exact sequence
\[
	0\rightarrow \mathcal{F}(Y)\rightarrow \prod_i\mathcal{F}(U_i)\rightarrow \prod_{i,j,k}\mathcal{F}(U_{ijk})\,.
\]
Thus $\mathcal{F}(Y)$ is a transversal Banach $A$-modules by \cref{lma:trans}. In particular, for any affinoid domain $\Sp B$ in $X$, we have an admissible exact sequence
\[
	0\rightarrow \mathcal{F}(Y)\hat{\otimes}_A B\rightarrow \prod_i\mathcal{F}(U_i)\hat{\otimes}_A B\rightarrow \prod_{i,j,k}\mathcal{F}(U_{ijk})\hat{\otimes}_A B\,.
\]
By our assumption and \cref{cor:qcohLiusecliudom}, this sequence can be rewritten as
\[
	0\rightarrow \mathcal{F}(Y)\hat{\otimes}_A B\rightarrow \prod_i\mathcal{F}(U_i\cap f^{-1}(\Sp B))\rightarrow \prod_{i,j,k}\mathcal{F}(U_{ijk}\cap f^{-1}(\Sp B))\,.
\]
It is now clear that $\widetilde{\mathcal{F}(Y)^A}=f_*\mathcal{F}$ and $f_*\mathcal{F}$ is quasi-coherent. 
\end{proof}

\begin{theorem}\label{thm:pullqcoh}
Let $f:Y=\Sp B\rightarrow X=\Sp A$ be a morphism in $\LiuCat_k$.
Let $\mathcal{F}$ be a quasi-coherent sheaf on $X$. Let $F=H^0(X,\mathcal{F})$. Assume that $\mathcal{F}$ is transversal to $f$:
\[
	F \otL_A C=F\hat{\otimes}_A C
\]
for all Liu domains $\Sp C$ in $\Sp B$.
Then the left adjoint $f^*$ of $f_*:\BanModCat_Y\rightarrow \BanModCat_X$ at $\mathcal{F}$ exists and $f^*\mathcal{F}$  is the quasi-coherent sheaf associated to $F\hat{\otimes}_A B$.
\end{theorem}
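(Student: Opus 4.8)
The idea is to exhibit the quasi-coherent sheaf $\widetilde{F\hat{\otimes}_A B}$ on $Y$ as the object representing the functor $\mathcal{G}\mapsto\Hom_{\BanModCat_X}(\mathcal{F},f_*\mathcal{G})$ on $\BanModCat_Y$; this simultaneously shows that $f^*$ is defined at $\mathcal{F}$ and identifies its value. First I would verify that $F\hat{\otimes}_A B$ is a transversal Banach $B$-module, so that $\widetilde{F\hat{\otimes}_A B}$ is a genuine quasi-coherent sheaf on $Y$ with $H^0(Y,\widetilde{F\hat{\otimes}_A B})=F\hat{\otimes}_A B$. Since $\Sp B$ is a Liu domain in itself, the hypothesis that $\mathcal{F}$ is transversal to $f$, applied with $\Sp C=\Sp B$, already gives $F\otL_A B=F\hat{\otimes}_A B$. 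For an arbitrary Liu domain $\Sp C$ in $Y$ one then computes
\begin{align*}
(F\hat{\otimes}_A B)\otL_B C&=(F\otL_A B)\otL_B C=F\otL_A(B\otL_B C)\\
&=F\otL_A C=F\hat{\otimes}_A C=(F\hat{\otimes}_A B)\hat{\otimes}_B C,
\end{align*}
using associativity of the derived completed tensor product, the identity $B\otL_B C=C$, and the transversality hypothesis again; hence $F\hat{\otimes}_A B$ is transversal.

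Next I would record the bookkeeping fact that for any sheaf of Banach $\mathcal{O}_Y$-modules $\mathcal{G}$ the pushforward $f_*\mathcal{G}$ carries a natural Banach structure making it an object of $\BanModCat_X$. Indeed, for a Liu domain $\Sp C$ in $X$, \cref{lma:Liutensor} shows $f^{-1}\Sp C=\Sp(B\hat{\otimes}_A C)$ is a Liu domain in $Y$, so $(f_*\mathcal{G})(\Sp C)=\mathcal{G}(f^{-1}\Sp C)$ is a Banach module over $B\hat{\otimes}_A C$, in particular a Banach $C$-module, and the compatibility with restriction to smaller Liu domains is inherited from that of $\mathcal{G}$; moreover $H^0(X,f_*\mathcal{G})=H^0(Y,\mathcal{G})$ as Banach $A$-modules (via $A\to B$).

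Finally I would assemble the adjunction from the chain of bijections
\begin{align*}
\Hom_{\BanModCat_Y}(\widetilde{F\hat{\otimes}_A B},\mathcal{G})&=\Hom_{\BanModCat_B}(F\hat{\otimes}_A B,H^0(Y,\mathcal{G}))\\
&=\Hom_{\BanModCat_A}(F,H^0(Y,\mathcal{G}))=\Hom_{\BanModCat_X}(\mathcal{F},f_*\mathcal{G}),
\end{align*}
where the first and third equalities are \cref{prop:morcohtosheaf} — applied on $Y$ to the quasi-coherent sheaf $\widetilde{F\hat{\otimes}_A B}$, and on $X$ to the quasi-coherent sheaf $\mathcal{F}$ together with the Banach sheaf $f_*\mathcal{G}$, respectively — and the middle equality is the base-change adjunction $-\hat{\otimes}_A B\dashv(\text{restriction of scalars})$ in the category of Banach modules. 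Reading off the explicit descriptions in the proof of \cref{prop:morcohtosheaf} shows all of these bijections are natural in $\mathcal{G}$, so $\widetilde{F\hat{\otimes}_A B}$ represents the functor and hence equals $f^*\mathcal{F}$.

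I expect the transversality computation of the first step to be the main point: it is precisely there that the hypothesis ``$\mathcal{F}$ transversal to $f$'' enters, both to make sense of $\widetilde{F\hat{\otimes}_A B}$ and to propagate transversality to $F\hat{\otimes}_A B$, and one must also be careful to check that $f_*\mathcal{G}$ genuinely lies in $\BanModCat_X$ before invoking \cref{prop:morcohtosheaf}. The remaining points — naturality of the bijections and the identification $H^0(X,f_*\mathcal{G})=H^0(Y,\mathcal{G})$ as Banach $A$-modules — are routine.
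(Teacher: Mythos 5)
Your proposal is correct and follows essentially the same route as the paper: verify that $F\hat{\otimes}_A B$ is a transversal Banach $B$-module via the derived tensor computation (using the transversality hypothesis, in particular at $\Sp C=\Sp B$), then identify $\Hom_{\BanModCat_Y}(\widetilde{F\hat{\otimes}_A B},\mathcal{G})$ with $\Hom_{\BanModCat_X}(\mathcal{F},f_*\mathcal{G})$ by applying \cref{prop:morcohtosheaf} on both $Y$ and $X$ together with the base-change adjunction $-\hat{\otimes}_A B\dashv$ restriction of scalars. Your extra care in spelling out the associativity step and checking that $f_*\mathcal{G}$ carries a Banach structure in $\BanModCat_X$ are details the paper leaves implicit, not a different argument.
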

\begin{proof}
We claim that $F\hat{\otimes}_A B$ is a transversal Banach $B$-module. 

This is proved in \cite[Lemma~4.48]{BBK17}, we reproduce their proof: let $\Sp C\rightarrow \Sp B$ be a Liu domain, we need to show 
\[
(F\hat{\otimes}_A B)\otL_B C=(F\hat{\otimes}_A B)\hat{\otimes}_B C\,.
\]
In fact,
\[
(F\hat{\otimes}_A B)\otL_B C=(F\otL_A B)\otL_B C=F\otL_A C=F\hat{\otimes}_A C=(F\hat{\otimes}_A B)\hat{\otimes}_B C\,,
\]
which concludes the claim.

By \cref{prop:morcohtosheaf}, for any sheaf of Banach $\mathcal{O}_Y$-modules $\mathcal{G}$,
\[
	\Hom_{\BanModCat_Y}(\widetilde{F\hat{\otimes}_A B},\mathcal{G})=\Hom_{\BanModCat_B}(F\hat{\otimes}_A B,H^0(Y,\mathcal{G}))=\Hom_{\BanModCat_A}(F,H^0(Y,\mathcal{G}))\,.	
\]
On the other hand,  by \cref{prop:morcohtosheaf}, we have
\[
	\Hom_{\BanModCat_X}(\mathcal{F},f_*\mathcal{G})=\Hom_{\BanModCat_A}(F,H^0(Y,\mathcal{G}))\,.
\]
We conclude.
\end{proof}

\section{Liu morphisms and quasi-coherent sheaves of Liu algebras}\label{sec:Liumor}
Let $k$ be a complete non-Archimedean valued field.

\subsection{Liu morphisms}

\begin{definition}\label{def:Liumorp}
Let $f:X\rightarrow Y$ be a morphism in $\AnaCat_k$. We say $f$ is \emph{Liu} if for any Liu domain $Z$ of $Y$, $f^{-1}Z$ is a Liu domain. 

For any $k$-analytic space $Y$, let $\LiuMorCat_{\rightarrow Y,k}$ denote the category of Liu morphisms $X\rightarrow Y$. A morphism between two Liu morphisms $X_1\rightarrow Y$ and  $X_2\rightarrow Y$ is a morphism of in the over-category $\AnaCat_{k/Y}$.
\end{definition}

The following two propositions are obvious.
\begin{proposition}
Let $f:X\rightarrow Y$, $g:Y\rightarrow Z$ be morphisms in $\AnaCat_k$. Assume that $f$, $g$ are both Liu, then so is $g\circ f$.
\end{proposition}
\begin{proposition}
Let $f:X\rightarrow Y$ be a Liu morphism in $\AnaCat_k$. Then $f$ is separated and quasi-compact.
\end{proposition}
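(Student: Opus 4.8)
The plan is to handle quasi-compactness and separatedness separately, in both cases reducing to the behaviour of $f$ over affinoid domains of $Y$ and exploiting two facts: affinoid spaces are Liu (the example following \cref{def:Liuspace}), and Liu spaces are by definition quasi-compact and separated over $k$.

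For quasi-compactness I would argue directly. A morphism of $k$-analytic spaces is quasi-compact exactly when the preimage of every affinoid domain of the target is quasi-compact. If $V$ is an affinoid domain of $Y$, then $V$ is in particular a Liu domain, so by \cref{def:Liumorp} the preimage $f^{-1}V$ is a Liu domain of $X$; being a Liu $k$-analytic space it is quasi-compact. Hence $f$ is quasi-compact. This step is immediate.

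For separatedness I would first reduce $G$-locally on $Y$. Given a $G$-covering $\{V_i\}$ of $Y$, the analytic domains $f^{-1}V_i\times_{V_i}f^{-1}V_i$ form a $G$-covering of $X\times_Y X$ (they are the preimages of the $V_i$ under the structure map $X\times_Y X\to Y$), and over the $i$-th of them $\Delta_f$ restricts to the diagonal of $f_i\colon f^{-1}V_i\to V_i$; since being a closed immersion is $G$-local on the target, $f$ is separated if and only if every $f_i$ is. Applying this to the $G$-covering of $Y$ by all its affinoid domains, I am reduced to the following: if $Z$ is separated over $k$ and $V$ is affinoid, then any morphism $Z\to V$ is separated. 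Here $Z=f^{-1}V_i$ is a Liu domain, hence a Liu space, hence separated over $k$, and $V=V_i$ is affinoid hence separated over $k$. To finish, the closed immersion $V\to V\times_k V$ base-changes to a closed immersion $Z\times_V Z\to Z\times_k Z$; composing the diagonal $\Delta_{Z/V}$ with it yields $\Delta_{Z/k}$, a closed immersion because $Z$ is separated over $k$, so left cancellation for closed immersions shows $\Delta_{Z/V}$ is a closed immersion.

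The only point needing a little care — and the one I would flag as the (very minor) obstacle — is that two Berkovich-geometric generalities are used which do not follow literally from anything recalled in the text: that separatedness of a morphism is $G$-local on the target, and that closed immersions admit left cancellation. Both are standard, so in the write-up I would either cite \cite{Berk93} or spell out the two short verifications (closedness of the image and surjectivity of the structure-sheaf map are both local). Everything else in the argument is formal.
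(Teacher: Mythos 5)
Your argument is correct: the paper offers no proof of this proposition (it simply declares it obvious), and your fleshed-out version --- quasi-compactness because the preimage of any affinoid, hence Liu, domain of $Y$ is a Liu domain and therefore quasi-compact, and separatedness by reducing G-locally over affinoid domains $V$ of $Y$ and cancelling the closed immersion $\Delta_{Z/k}$ against the closed immersion $Z\times_V Z\rightarrow Z\times_k Z$ --- is exactly the intended argument. The two generalities you flag (G-locality of separatedness on the target and left cancellation for closed immersions) are indeed standard in Berkovich's framework, and citing \cite{Berk93} for them is the appropriate way to close the write-up.
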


\begin{lemma}\label{lma:higherdirLiu}
Let $f:X\rightarrow Y$ be a morphism in $\LiuCat_k$. Let $\mathcal{F}$ be a coherent sheaf on $X$. Then $R^if_*\mathcal{F}=0$ for all $i>0$.
\end{lemma}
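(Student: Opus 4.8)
The plan is to reduce the vanishing of $R^if_*\mathcal F$ to the vanishing of higher cohomology of coherent sheaves on Liu domains, which is furnished by \cref{thm:cohLiu} combined with the Tate acyclicity results of the previous section. Recall that $R^if_*\mathcal F$ is the sheaf associated to the presheaf $Z \mapsto H^i(f^{-1}Z, \mathcal F)$ on the site $Y$. Since $Y = \Sp A$ is a Liu $k$-analytic space, the Liu domains $\Sp B$ in $Y$ form a basis for the G-topology, so it suffices to show that $H^i(f^{-1}(\Sp B), \mathcal F|_{f^{-1}(\Sp B)}) = 0$ for every Liu domain $\Sp B$ in $Y$ and every $i > 0$.

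Now fix such a Liu domain $\Sp B \subseteq Y$. By \cref{prop:LiuCart} applied to $f: X \to Y$ and the inclusion $\Sp B \hookrightarrow Y$ (an analytic domain, hence in particular a morphism in $\LiuCat_k$ once we know $\Sp B$ is Liu), the fiber product $f^{-1}(\Sp B) = X \times_Y \Sp B$ is again a Liu $k$-analytic space; write $f^{-1}(\Sp B) = \Sp C$. Moreover $\mathcal F|_{\Sp C}$ is a coherent sheaf on $\Sp C$: this is the restriction of a coherent sheaf along the analytic domain inclusion $\Sp C \hookrightarrow X$. By \cref{thm:cohLiu}, $\mathcal F|_{\Sp C} = \widetilde M$ for a finite $C$-module $M$, and by \cref{ex:coharequas} this coherent sheaf is quasi-coherent in the sense of \cref{def:qcoh}, i.e. associated to a transversal Banach $C$-module. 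Therefore \cref{cor:qcohvan} gives $H^i(\Sp C, \mathcal F|_{\Sp C}) = 0$ for all $i > 0$.

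It remains to assemble these vanishings into the statement $R^if_*\mathcal F = 0$. The presheaf $Z \mapsto H^i(f^{-1}Z, \mathcal F)$ vanishes on the basis of Liu domains of $Y$ for $i > 0$, so its sheafification vanishes; but $R^if_*\mathcal F$ is by definition this sheafification, whence $R^if_*\mathcal F = 0$. I should be slightly careful that the standard identification of $R^if_*$ with the sheafification of $Z \mapsto H^i(f^{-1}Z, \mathcal F)$ is valid on the G-topological site; this is a formal fact about derived pushforward of sheaves on ringed sites (e.g. \cite[\href{https://stacks.math.columbia.edu/tag/01E0}{Tag 01E0}]{stacks-project}), and causes no difficulty here.

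The only genuine subtlety, and hence the main thing to check carefully, is the passage from coherence of $\mathcal F$ on the Liu space $X$ to coherence of its restriction to the Liu domain $f^{-1}(\Sp B)$ together with the identification of this restriction as a quasi-coherent sheaf in the transversal-module sense; but this is precisely the content of \cref{thm:cohLiu} and \cref{ex:coharequas}, so no new argument is needed. Everything else is a bookkeeping exercise with sheafification on the G-site.
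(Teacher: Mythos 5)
Your proof is correct and takes essentially the same route as the paper: reduce to showing $H^i(f^{-1}(\Sp B),\mathcal F)=0$ for domains in a basis of the G-topology of $Y$, and use \cref{prop:LiuCart} to see that these preimages are Liu. The only cosmetic difference is in the last vanishing step, where the paper invokes directly that Liu spaces are universally cohomologically Stein (\cref{thm:Liuchar}), while you route the same conclusion through \cref{thm:cohLiu}, \cref{ex:coharequas} and \cref{cor:qcohvan}; both are valid given the paper's earlier results.
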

\begin{proof}
The problem is local, so it suffices to show that $H^i(f^{-1}(\Sp A),\mathcal{F})=0$ for any affinoid domain $\Sp A$ of $Y$. This follows from the fact that $f^{-1}(\Sp A)$ is Liu (\cref{prop:LiuCart}).
\end{proof}

\begin{theorem}\label{thm:Liumoreq} 
Let $f:X\rightarrow Y$ be a morphism in $\AnaCat_k$. Assume that $Y$ is Liu.
Then the following are equivalent:
\begin{enumerate}
	\item $f$ is Liu.
	\item $f$ is quasi-compact and separated, for any analytic field extension $k'/k$, and coherent sheaf $\mathcal{F}$ on $X_{k'}$, 
	\[
		R^if_{k'*}\mathcal{F}=0\,,\quad i>0\,.
	\]
	\item $X$ is Liu. 
\end{enumerate}
\end{theorem}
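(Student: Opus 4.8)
The plan is to use condition $(4)$ as a hub: when $Y$ is Liu I would prove $(1)\Leftrightarrow(4)$, $(2)\Leftrightarrow(4)$ and $(3)\Leftrightarrow(4)$, and then, for $Y$ merely separated, deduce $(1)\Leftrightarrow(2)$ from the special case by restricting along Liu domains of $Y$. The recurring tool is the identity $g^{-1}W=V\times_Z W$ for an analytic domain $W$ of a space $Z$ and a morphism $g\colon V\to Z$, which lets \cref{prop:LiuCart} turn the preimage of a Liu domain under a morphism of Liu spaces into a Liu space.

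Assume first $Y$ is Liu. For $(1)\Leftrightarrow(4)$: $(1)\Rightarrow(4)$ follows by taking the Liu domain $Z=Y$, so that $f^{-1}Y=X$ is Liu; conversely $(4)\Rightarrow(1)$ because for any Liu domain $Z\subseteq Y$ one has $f^{-1}Z=X\times_Y Z$ with all three factors Liu, so $f^{-1}Z$ is Liu by \cref{prop:LiuCart}. For $(4)\Rightarrow(2)$ one covers $Y$ by affinoid (hence Liu) domains $\{U_i\}$ and uses $f^{-1}U_i=X\times_Y U_i$. For $(4)\Rightarrow(3)$: if $X$ is Liu then so is $X_{k'}$, hence $f_{k'}\colon X_{k'}\to Y_{k'}$ is a morphism of Liu spaces and \cref{lma:higherdirLiu} gives $R^if_{k'*}\mathcal F=0$ for $i>0$, while $f$ is quasi-compact and separated because $X$ and $Y$ are quasi-compact and separated. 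For $(3)\Rightarrow(4)$: $X$ is quasi-compact and separated (since $f$ and $Y$ are), and by \cref{cor:pushqcoh} $f_{k'*}\mathcal F$ is quasi-coherent on the Liu space $Y_{k'}$, hence acyclic by \cref{cor:qcohvan}, so the Leray spectral sequence together with the hypothesis $R^if_{k'*}\mathcal F=0$ gives $H^i(X_{k'},\mathcal F)=H^i(Y_{k'},f_{k'*}\mathcal F)=0$ for $i>0$; thus $X$ is universally cohomologically Stein, and \cref{thm:Liuchar} shows $X$ is Liu.

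The step I expect to be the main obstacle is $(2)\Rightarrow(4)$ with $Y$ Liu. First I would refine the covering: each Liu domain $U_i$ is quasi-compact, hence a finite union of affinoid domains, each of which is an affinoid domain of $Y$ with Liu preimage inside $f^{-1}U_i$ by \cref{prop:LiuCart}; so we may assume $\{U_i=\Sp C_i\}$ is a finite affinoid covering of $Y$ with $X_i:=f^{-1}U_i$ Liu. Since $Y$ is separated, for every affinoid domain $\Sp C\subseteq Y$ the intersections $\Sp C\cap U_i$ are affinoid with Liu preimages, so coherent sheaves on $X$ are acyclic there; as $R^qf_*\mathcal F$ restricted to $U_i$ equals $R^q(f|_{X_i})_*(\mathcal F|_{X_i})$, \cref{lma:higherdirLiu} gives $R^qf_*\mathcal F=0$ for $q>0$ and every coherent $\mathcal F$ on $X$. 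Moreover $f$ is quasi-compact and quasi-separated (preimages of affinoid domains of $Y$ are finite unions of quasi-compact analytic domains) and $X$ is separated (being covered by the separated $X_i$ over the separated $Y$), so \cref{cor:pushqcoh} makes $f_*\mathcal F$ quasi-coherent on the Liu space $Y$; by \cref{cor:qcohvan} and Leray, $H^i(X,\mathcal F)=H^i(Y,f_*\mathcal F)=0$ for $i>0$. Running this verbatim over every analytic extension $k'/k$ — where $\{(U_i)_{k'}\}$ still covers the Liu space $Y_{k'}$ and each $(X_i)_{k'}$ is still Liu — shows $X$ is universally cohomologically Stein, and as $X$ is quasi-compact and separated, \cref{thm:Liuchar} gives that $X$ is Liu. (This also yields $(2)\Rightarrow(3)$ on the way.) The delicate points here are verifying the vanishing of the higher direct images from the covering hypothesis and checking that $f_*\mathcal F$ is genuinely quasi-coherent on $Y$, so that \cref{cor:pushqcoh} and the Leray argument apply uniformly over all $k'$.

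Finally, for $Y$ merely separated: $(1)\Rightarrow(2)$ is trivial since affinoid domains are Liu and G-cover $Y$, and for $(2)\Rightarrow(1)$ take any Liu domain $Z\subseteq Y$. Then $Z\cap U_i$ is a Liu domain by \cref{cor:intLiu} and these G-cover $Z$, while $f^{-1}(Z\cap U_i)=f^{-1}U_i\times_{U_i}(Z\cap U_i)$ is Liu by \cref{prop:LiuCart}, because $f^{-1}U_i\to U_i$ is a morphism of Liu spaces by $(2)$. Hence $f^{-1}Z\to Z$ satisfies $(2)$ over the Liu base $Z$, and the case already treated ($(2)\Rightarrow(4)$) shows $f^{-1}Z$ is Liu. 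As $Z$ was an arbitrary Liu domain of $Y$, $f$ is Liu, which completes all the equivalences.
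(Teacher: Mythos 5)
Your proposal is correct and follows essentially the same route as the paper's proof: the heart in both cases is the reduction to a Liu base, killing the higher direct images via \cref{lma:higherdirLiu} and \cref{prop:LiuCart}, then combining \cref{cor:pushqcoh}, \cref{cor:qcohvan} and the Leray spectral sequence, together with invariance under base field extension and \cref{thm:Liuchar}, while $(4)\Rightarrow(1)$ is \cref{prop:LiuCart}; you merely reorganize the implications around condition $(4)$ and deduce the separated case by restricting to a Liu domain, which is what the paper does implicitly. The one step you state too casually is the separatedness of $X$ (equivalently of $f$) in $(2)\Rightarrow(4)$: ``covered by the separated $X_i$ over the separated $Y$'' is not by itself an argument, since separatedness is not a naively local property --- you need that separatedness of morphisms is G-local on the base, i.e. \cite[Theorem~1.2]{CT19}, which is exactly what the paper cites at this point; with that reference supplied your proof is complete.
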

\begin{proof}
(1) $\implies$ (2):  We may assume that $k'=k$ and it suffices to prove that for any affinoid domain $\Sp A$ in $Y$, $H^i(f^{-1}(\Sp A),\mathcal{F})=0$ for all $i>0$, which is trivial as $f^{-1}(\Sp A)$ is Liu.

(2) $\implies$ (3): This follows from Leray's spectral sequence.

(3) $\implies$ (1): This follows from \cref{prop:LiuCart}.
\end{proof}

\begin{example}\label{ex:notGlocal}
Recall \cite[Définition~3.18]{Day21}: A morphism $f:X\rightarrow Y$ in $\AnaCat_k$ is said to be \emph{almost affinoid} (\emph{presque affinoïde} in French) if there is a G-covering of $Y$ by affinoid domains $\{U_i\}$ such that $f^{-1}U_i$ is affinoid for each $i$.

An almost affinoid morphism is not necessarily Liu even if the target is affinoid. See 
\cite[Example~9.1.2]{SW20} for a counterexample. I would like to thank Marco Maculan for pointing this out to me.
\end{example}

\subsection{Quasi-coherent sheaves of Liu algebras}

\begin{definition}\label{def:qcohshLiualg}
Let $X$ be a $k$-analytic space.  A sheaf of Banach algebras $\mathcal{F}$ on $X$ is a \emph{quasi-coherent sheaf of Liu $k$-algebras} if for each Liu domain $\Sp A$ in $X$, $H^0(\Sp A,\mathcal{F})$ is a Liu $k$-algebra and $\mathcal{F}|_{\Sp A}$ is a quasi-coherent sheaf (in the sense of \cref{def:qcoh}). A morphism of quasi-coherent sheaves of Liu $k$-algebras on $X$ is a homomorphism of the underlying sheaves of $\mathcal{O}_X$-algebras. We denote the category of quasi-coherent sheaves of Liu $k$-algebras on $X$ by $\QcohLiuAlgCat_{X,k}$.
\end{definition}
\begin{remark}
By \cref{cor:conttoLiu}, a sheaf of Liu $k$-algebras admits a natural Banach structure. Moreover, a morphism of quasi-coherent sheaves of Liu $k$-algebras on $X$ is automatically a morphism in $\BanModCat_X$. Hence $\QcohLiuAlgCat_{X,k}$ is a full subcategory of $\BanModCat_X$.
\end{remark}

\begin{remark}
We do not define a quasi-coherent sheaf on a $k$-analytic space. In fact, according to the philosophy of \cite{BBK17}, in the global setting, the correct notion to consider is the derived category of quasi-coherent sheaves. 
\end{remark}

\begin{proposition}\label{prop:rep}
Let $X$ be a \emph{separated} $k$-analytic space. Let $\mathcal{A}$ be a quasi-coherent sheaf of Liu $k$-algebras on $X$. Consider the presheaf $F$ on $\AnaCat_k$:
\[
	T\mapsto \left\{\, (f,\varphi): f\in \Hom_{\AnaCat_k}(T,X),\varphi\in\Hom_{\mathcal{O}_T}(f^*\mathcal{A},\mathcal{O}_T) \,\right\}\,.
\]
Then $F$ is representable.
\end{proposition}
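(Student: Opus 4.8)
The plan is to reduce the statement to the representability theorem \cref{thm:rep}. First I would verify that $F$ is a sheaf for the G-topology: morphisms $T\to X$ glue over a G-covering of $T$, the formation of $f^{*}\mathcal{A}$ commutes with restriction to analytic domains, and a homomorphism $f^{*}\mathcal{A}\to\mathcal{O}_{T}$ glues from its restrictions, so $F$ satisfies hypothesis~(1) of \cref{thm:rep}. Next I would fix a G-covering $\{U_{i}\}$ of $X$ by affinoid domains (chosen so that the finiteness clause of \cref{thm:rep}~(2)(b) holds) and let $F_{i}\subseteq F$ be the subpresheaf of those pairs $(f,\varphi)$ whose structure morphism $f\colon T\to X$ factors through $U_{i}$. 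Since an analytic domain absorbs every morphism whose image it contains, for $(f,\varphi)\in F(T)$ the fibre product $F_{i}\times_{F}T$ is represented by the analytic domain $f^{-1}(U_{i})\subseteq T$; as $X$ is separated, $U_{i}$ is a compact subset of a Hausdorff space, hence closed, so $f^{-1}(U_{i})$ is a \emph{closed} analytic domain of $T$. The $F_{i}$ cover $F$ because $\{f^{-1}(U_{i})\}_{i}$ is a G-covering of $T$ whenever $\{U_{i}\}_{i}$ is one of $X$. So by \cref{thm:rep} it only remains to show that each $F_{i}$ is representable.

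For this I would set $A_{i}:=H^{0}(U_{i},\mathcal{O}_{X})$, an affinoid (hence Liu) $k$-algebra, and $B_{i}:=H^{0}(U_{i},\mathcal{A})$, which is a Liu $k$-algebra by the definition of a quasi-coherent sheaf of Liu algebras; the $\mathcal{O}_{X}$-algebra structure of $\mathcal{A}$ furnishes a homomorphism $A_{i}\to B_{i}$, and hence by \cref{thm:Liu} a morphism $\pi_{i}\colon\Sp B_{i}\to U_{i}$. I claim that $\Sp B_{i}$, equipped with the composite $\Sp B_{i}\xrightarrow{\pi_{i}}U_{i}\hookrightarrow X$, represents $F_{i}$. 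Indeed $\mathcal{A}|_{U_{i}}$ is quasi-coherent with global sections $B_{i}$, so $\mathcal{A}|_{U_{i}}=\widetilde{B_{i}}$, and \cref{thm:qcohpush} moreover gives $\pi_{i*}\mathcal{O}_{\Sp B_{i}}=\widetilde{B_{i}}$, which supplies the universal $\varphi$. Since $F_{i}$ and $\Hom_{\AnaCat_{k}}(-,\Sp B_{i})$ are both G-sheaves, it suffices to produce a bijection, natural in an affinoid test object $\Sp C$, between $F_{i}(\Sp C)$ and $\Hom(\Sp C,\Sp B_{i})$. Writing $g\colon\Sp C\to U_{i}$ for the structure morphism (so that $f^{*}\mathcal{A}=g^{*}\widetilde{B_{i}}$), the adjunction between $g^{*}$ and $g_{*}$ on sheaves of $\mathcal{O}$-algebras identifies the set of $\varphi$ lying over a fixed $g$ with $\Hom_{\mathcal{O}_{\Sp A_{i}}\text{-}\mathrm{alg}}(\widetilde{B_{i}},g_{*}\mathcal{O}_{\Sp C})$. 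By \cref{thm:qcohpush} ($\mathcal{O}_{\Sp C}$ being coherent, hence quasi-coherent), $g_{*}\mathcal{O}_{\Sp C}$ is the quasi-coherent sheaf attached to $C$, whence by the algebra analogue of \cref{prop:morcohtosheaf}, continuity being automatic by \cref{cor:conttoLiu}, this set is $\Hom_{A_{i}\text{-}\mathrm{alg}}(B_{i},C)$, which by \cref{thm:Liu} is $\{h\colon\Sp C\to\Sp B_{i}:\pi_{i}\circ h=g\}$. Summing over $g$ and using the G-sheaf property then yields $F_{i}\cong\Hom_{\AnaCat_{k}}(-,\Sp B_{i})$.

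I expect the main difficulty to lie in this last identification. The subtle point is that \cref{prop:morcohtosheaf} concerns sheaves of \emph{Banach} modules, so one must check that $g_{*}\mathcal{O}_{\Sp C}$ genuinely carries a Banach structure over $U_{i}$ — and this is precisely where passing to affinoid test objects $\Sp C$ is exploited, since for a morphism of affinoid spaces the preimage of an affinoid domain is a compact analytic domain, hence a finite union of affinoid domains, so its ring of functions is Banach. Along the way one also uses \cref{lma:Liutensor} to see that the intermediate algebras $B_{i}\hat{\otimes}_{A_{i}}D$ and $C\hat{\otimes}_{A_{i}}D$ (for affinoid domains $\Sp D\subseteq U_{i}$) are Liu, and \cref{cor:conttoLiu} repeatedly to discard continuity hypotheses. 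Granting the representability of each $F_{i}$ by $\Sp B_{i}$, the relevant base changes $U_{ij}\subseteq\Sp B_{i}$ are the closed analytic domains $\pi_{i}^{-1}(U_{i}\cap U_{j})$ (here $U_{i}\cap U_{j}$ is affinoid because $X$ is separated), the finiteness clause of \cref{thm:rep} follows from local finiteness of $\{U_{i}\}$, and \cref{thm:rep} then glues the $\Sp B_{i}$ into a representing object — which is, by construction, the relative spectrum $\Specrel_{X}(\mathcal{A})$.
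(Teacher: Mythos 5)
Your proposal is essentially the paper's own argument for its first case, and your detailed verification that each $F_i$ is represented by $\Sp H^0(U_i,\mathcal{A})$ (via the pullback--pushforward adjunction, \cref{thm:qcohpush}, the algebra version of \cref{prop:morcohtosheaf}, \cref{cor:conttoLiu} and \cref{thm:Liu}) actually fills in a step the paper only asserts. The genuine gap is at the very start: you ``fix a G-covering $\{U_i\}$ of $X$ by affinoid domains chosen so that the finiteness clause of \cref{thm:rep}(2)(b) holds''. That clause requires each $U_i$ to meet only finitely many $U_j$, i.e.\ a locally finite affinoid covering of all of $X$. Such a covering exists when $X$ is paracompact, but a separated $k$-analytic space is in general only locally compact and Hausdorff, not paracompact, and nothing in your proposal produces the required covering; your closing remark that ``the finiteness clause follows from local finiteness of $\{U_i\}$'' presupposes exactly what needs to be arranged. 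Since you invoke \cref{thm:rep} only in its closed-domain form, where the finiteness hypothesis is essential, the proof as written establishes the proposition only for paracompact $X$.

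The paper handles the general case by a second step that your proposal is missing: it first proves the paracompact case exactly as you do, and then, for an arbitrary separated $X$, takes a covering $\{V_i\}$ by paracompact \emph{open} analytic domains (as in the final step of the proof of \cite[Proposition~1.4.1]{Berk93}), defines the corresponding subfunctors $F_i$, and applies the open-domain variant of \cref{thm:rep}, which carries no finiteness requirement; the representability of each such $F_i$ is then supplied by the paracompact case already treated. To make your argument complete you would need to add this reduction (or otherwise justify the existence of a locally finite affinoid G-covering of an arbitrary separated $k$-analytic space, which is not available in general).
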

\begin{proof}
Assume first that $X$ is paracompact.
It suffices to verify that the conditions of \cref{thm:rep} are satisfied. 

(1) The sheaf condition follows from \cite[Proposition~1.3.2]{Berk93}.

(2) 
Take a locally finite affinoid covering $\{U_i\}$ of $X$. Observe that each $U_i$ is closed as $X$ is separated. Take $F_i$ to be the subfunctor of $F$ consisting of pairs $(f:T\rightarrow S,\varphi)$ such that $f(T)\subseteq U_i$. 
Then $F_i$ is represented by $\Sp \mathcal{A}(U_i)$. Thus 2(a) is satisfied. The conditions 2(b) and 2(c) follows from the choice of $U_i$.

In general, take a paracompact open covering $\{V_i\}$ of $X$ as in the final step of \cite[Proof of Proposition~1.4.1]{Berk93}. 
Repeat the same construction as in the previous step, with $\{V_i\}$ in place of $\{U_i\}$, we get subfunctors $F_i$ of $F$. 
Again, it suffices to verify the conditions of 2(a), 2(b), 2(c) of \cref{thm:rep}. The conditions 2(b), 2(c) follows from the choice of $\{V_i\}$, while the condition 2(a) follows from the special we just treated. 
\end{proof}
\begin{remark}
Of course, in \cref{prop:rep}, one can weaken the separateness assumption to Hausdorff condition. It is not clear to the author if one can remove this condition.
\end{remark}

\begin{definition}\label{def:relspec}
Let $X$ be a \emph{separated} $k$-analytic space. Let $\mathcal{A}$ be a quasi-coherent sheaf of Liu $k$-algebras on $X$. We define the \emph{relative spectrum} $\Specrel_X \mathcal{A}$ as the $k$-analytic space representing the presheaf $F$ in \cref{prop:rep}. Note that there is a natural morphism $\pi:\Specrel_X \mathcal{A}\rightarrow X$. We sometimes call $\pi$ the relative spectrum as well.
\end{definition}
\begin{proposition}
Let $X$ be a separated $k$-analytic space. Let $\mathcal{A}$ be a quasi-coherent sheaf of Liu $k$-algebras on $X$. Let $\pi:\Specrel_X \mathcal{A}\rightarrow X$ be the relative spectrum, then
\begin{enumerate}
	\item For each Liu domain $\Sp A$ in $X$, the restriction of $\pi$ to $\pi^{-1}(\Sp A)\rightarrow \Sp A$ is the same as $\Sp H^0(\Sp A,\mathcal{A})\rightarrow \Sp A$.
	\item For any morphism of separated $k$-analytic spaces $g:X'\rightarrow X$, $g^*\mathcal{A}$ is a quasi-coherent sheaf of Liu $k$-algebras and the natural morphism
	\[
	X'\times_X \Specrel_{X}\mathcal{A}\rightarrow \Specrel_{X'} g^*\mathcal{A}
	\]
	is an isomorphism over $X'$.
	\item The universal map 
	\[
	\mathcal{A}\rightarrow \pi_*\mathcal{O}_{\Specrel_X \mathcal{A}}
	\]
	is an isomorphism of sheaves of Banach algebras on $X$.
\end{enumerate}
\end{proposition}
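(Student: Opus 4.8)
The plan is to establish each of the three statements by reducing everything to the local picture over Liu domains, where the relative spectrum is by construction $\Sp$ of the global sections, and then invoke the earlier machinery on quasi-coherent sheaves and Liu algebras.

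\medskip

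First I would prove (1). Recall from the proof of \cref{prop:rep} that when $X$ is covered by affinoid (hence closed, since $X$ is separated) domains $\{U_i\}$, the subfunctor $F_i$ of pairs $(f,\varphi)$ with $f(T)\subseteq U_i$ is represented by $\Sp\mathcal{A}(U_i)$, and $\Specrel_X\mathcal{A}$ is glued from these. So for an affinoid domain $\Sp A$ in $X$ one gets $\pi^{-1}(\Sp A)=\Sp\mathcal{A}(\Sp A)=\Sp H^0(\Sp A,\mathcal{A})$ directly from the construction. For a general Liu domain $\Sp A$ in $X$, cover $\Sp A$ by finitely many affinoid domains $\Sp A_j$; over each $\Sp A_j$ the restriction of $\pi$ is $\Sp H^0(\Sp A_j,\mathcal{A})\to \Sp A_j$. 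By \cref{cor:qcohLiusecliudom} applied to the quasi-coherent sheaf $\mathcal{A}|_{\Sp A}$, we have $H^0(\Sp A_j,\mathcal{A})=H^0(\Sp A,\mathcal{A})\hat{\otimes}_A A_j$, so the $\Sp H^0(\Sp A_j,\mathcal{A})$ glue to $\Sp H^0(\Sp A,\mathcal{A})$ by \cref{lma:Liutensor} and \cref{thm:Tateacyc}; functoriality of $\Sp$ (\cref{thm:Liu}) identifies the glued space with $\pi^{-1}(\Sp A)$ over $\Sp A$. This also shows $\pi$ is a Liu morphism.

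\medskip

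For (2), both sides are $k$-analytic spaces over $X'$, so by the representability it suffices to identify the functors they represent. The functor of $X'\times_X\Specrel_X\mathcal{A}$ sends $T$ to pairs $(g':T\to X',\,(f,\varphi))$ with $f:T\to X$ and $\varphi\in\Hom_{\mathcal{O}_T}(f^*\mathcal{A},\mathcal{O}_T)$ such that $g\circ g'=f$; by adjunction $\Hom_{\mathcal{O}_T}(f^*\mathcal{A},\mathcal{O}_T)=\Hom_{\mathcal{O}_T}(g'^*g^*\mathcal{A},\mathcal{O}_T)$, which is exactly the functor defining $\Specrel_{X'}g^*\mathcal{A}$. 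One must first check $g^*\mathcal{A}$ is a quasi-coherent sheaf of Liu $k$-algebras on $X'$: this is G-local on $X'$ and $X$, so reduce to $X=\Sp A$, $X'=\Sp A'$ both affinoid; then $\mathcal{A}$ is transversal to the map $A\to A'$ (the relevant derived tensor identities hold by \cref{thm:liut} and \cref{lma:Liutensor} — indeed $A'$ over $A$ factors through Liu domains of no help in general, so one invokes \cref{thm:pullqcoh}, noting that transversality to $f$ here follows because $H^0(X,\mathcal{A})$ is transversal and \cref{thm:liut} gives the needed base change), and \cref{thm:pullqcoh} identifies $g^*\mathcal{A}$ with the quasi-coherent sheaf associated to $H^0(X,\mathcal{A})\hat{\otimes}_A A'$, whose global sections form a Liu $k$-algebra by \cref{lma:Liutensor}.

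\medskip

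For (3), the statement is G-local on the separated space $X$, so it suffices to treat $X=\Sp A$ a Liu $k$-analytic space. Then $\Specrel_X\mathcal{A}=\Sp B$ with $B=H^0(X,\mathcal{A})$ by (1), and the map $\pi:\Sp B\to\Sp A$ is a morphism in $\LiuCat_k$. By \cref{thm:qcohpush}, $\pi_*\mathcal{O}_{\Sp B}$ is the quasi-coherent sheaf on $\Sp A$ associated to the transversal $A$-module $H^0(\Sp B,\mathcal{O}_{\Sp B})=B$; but $\mathcal{A}|_{\Sp A}=\widetilde{B}$ as well, since $\mathcal{A}$ is quasi-coherent with global sections $B$. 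The universal map $\mathcal{A}\to\pi_*\mathcal{O}_{\Sp B}$ induces on global sections the identity $B\to B$ by construction of the universal pair, so it is an isomorphism on each affinoid domain by \cref{cor:qcohLiusecliudom}, hence an isomorphism of sheaves of Banach algebras. I expect the main obstacle to be (2): carefully justifying that $g^*\mathcal{A}$ is again quasi-coherent and a sheaf of Liu algebras — i.e. verifying the transversality hypothesis of \cref{thm:pullqcoh} in the generality needed and checking the Banach-algebra structure is preserved — while the functorial comparison of the two relative spectra and statements (1), (3) are comparatively formal given the results already established.
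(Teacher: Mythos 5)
Parts (1) and (3) of your plan are essentially sound, and since the paper itself omits the proof (it only points to the scheme-theoretic analogue in the Stacks project), your level of detail there is fine; one small imprecision in (1) is that $\pi^{-1}(\Sp A)=\Sp\mathcal{A}(\Sp A)$ is not literally ``direct from the construction'' for an arbitrary affinoid domain, since the gluing in \cref{prop:rep} uses one fixed covering --- but the functor-of-points computation that represents $F_i$ by $\Sp\mathcal{A}(U_i)$ (i.e.\ adjunction plus \cref{prop:morcohtosheaf} and \cref{thm:Liu}) applies verbatim to any Liu domain, which in fact gives (1) in one step. The genuine gap is in (2): you invoke \cref{thm:pullqcoh}, whose hypothesis is that $F=H^0(X,\mathcal{A})$ satisfies $F\otL_A C=F\hat{\otimes}_A C$ for \emph{every} Liu domain $\Sp C$ of $\Sp A'$, and you assert this follows from transversality of $F$ over $A$ together with \cref{thm:liut}. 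It does not: \cref{thm:liut} requires $\Sp C\rightarrow \Sp A$ to be a Liu domain, whereas here $\Sp C$ is only a Liu domain of $\Sp A'$ and $g$ is arbitrary, so $A\rightarrow C$ is not a homotopy epimorphism. Worse, the hypothesis of \cref{thm:pullqcoh} is simply false in the generality you need: take $A=k\{T\}$, $\mathcal{A}=\widetilde{A/(T)}$ (coherent, hence a quasi-coherent sheaf of Liu $k$-algebras), and $g:X'=\Sp k\rightarrow X=\Sp A$ the inclusion of the point $T=0$; with $C=A'=k$ one gets $F\otL_A C=[k\xrightarrow{\;0\;}k]$, which has nonzero $H^{-1}$, while $F\hat{\otimes}_A C=k$. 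So \cref{thm:pullqcoh} cannot be the source of $g^*\mathcal{A}$, even though the conclusion of (2) holds in this example.

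The repair stays inside the paper's toolkit and needs no transversality over $A$ at all: by (1), $\pi:\Specrel_X\mathcal{A}\rightarrow X$ is a Liu morphism, so (using separatedness of $X$ and $X'$) its base change $\pi':W:=X'\times_X\Specrel_X\mathcal{A}\rightarrow X'$ is Liu by the base-change corollary following \cref{thm:Liumoreq}. Hence $\pi'_*\mathcal{O}_W$ is a quasi-coherent sheaf of Liu $k$-algebras on $X'$ by \cref{cor:pushqcoh} and \cref{prop:qcohLiualoc}; checking on a G-covering of $X'$ by affinoid domains $\Sp A'$ each mapping into an affinoid domain $\Sp A$ of $X$, its sections are $H^0(\Sp A,\mathcal{A})\hat{\otimes}_A A'$ by \cref{prop:LiuCart}, \cref{lma:Liutensor} and \eqref{eq:h0tensor}, and the transversality implicit in \cref{thm:qcohpush} is now over $A'$, where \cref{thm:liut} genuinely applies. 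This sheaf is the correct interpretation of $g^*\mathcal{A}$, and with it your functor-of-points comparison of $X'\times_X\Specrel_X\mathcal{A}$ with $\Specrel_{X'}g^*\mathcal{A}$, as well as your arguments for (1) and (3), go through.
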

We omit the straightforward proof. See \cite[\href{https://stacks.math.columbia.edu/tag/01LQ}{Tag 01LQ}]{stacks-project} for example.
\begin{corollary}\label{cor:Liuequivalence}
Let $X$ be a separated $k$-analytic space. Then the functor 
\[
	\Specrel_X: \QcohLiuAlgCat_{X,k} \rightarrow \LiuMorCat_{\rightarrow X,k}
\]
is an anti-equivalence of categories. The quasi-inverse is given by $f\mapsto f_*$.
\end{corollary}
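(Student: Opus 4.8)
The plan is to assemble this from the preceding results rather than build anything new. The target category $\LiuMorCat_{\rightarrow X,k}$ is the category of Liu morphisms $Y\rightarrow X$; the functor $\Specrel_X$ produces, for a quasi-coherent sheaf $\mathcal{A}$ of Liu $k$-algebras, the morphism $\pi:\Specrel_X\mathcal{A}\rightarrow X$, and I must first check this lands in the right place. That $\pi$ is a Liu morphism is precisely part (1) of the preceding proposition combined with \cref{thm:Liumoreq}: for each Liu domain $\Sp A$ in $X$ the fiber $\pi^{-1}(\Sp A)$ is $\Sp H^0(\Sp A,\mathcal{A})$, which is a Liu space by definition of a quasi-coherent sheaf of Liu $k$-algebras; since $X$ is separated and the Liu domains form a G-covering, \cref{thm:Liumoreq}(2)$\implies$(1) shows $\pi$ is Liu. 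In the other direction, given a Liu morphism $f:Y\rightarrow X$ I claim $f_*\mathcal{O}_Y$ is a quasi-coherent sheaf of Liu $k$-algebras on $X$: for any Liu domain $\Sp A\subseteq X$ we have $H^0(\Sp A,f_*\mathcal{O}_Y)=H^0(f^{-1}\Sp A,\mathcal{O}_Y)$, and $f^{-1}\Sp A$ is a Liu space, so this is a Liu $k$-algebra; quasi-coherence of the restriction $(f_*\mathcal{O}_Y)|_{\Sp A}$ follows from \cref{cor:pushqcoh} (or \cref{thm:qcohpush}) applied to the Liu morphism $f^{-1}\Sp A\rightarrow \Sp A$, noting $\mathcal{O}_{f^{-1}\Sp A}$ is quasi-coherent by \cref{ex:coharequas}. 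The Banach structure on $f_*\mathcal{O}_Y$ is the canonical one supplied by \cref{cor:conttoLiu}.

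Next I would verify the two composites are naturally isomorphic to the identity. The composite $\mathcal{A}\mapsto \pi:\Specrel_X\mathcal{A}\rightarrow X\mapsto \pi_*\mathcal{O}_{\Specrel_X\mathcal{A}}$ is canonically isomorphic to $\mathcal{A}$ by part (3) of the preceding proposition. For the other composite, start from a Liu morphism $f:Y\rightarrow X$, form $\mathcal{A}:=f_*\mathcal{O}_Y$, and compare $\Specrel_X\mathcal{A}$ with $Y$ over $X$. This is G-local on $X$ by construction of the relative spectrum (part (1) of the preceding proposition) together with \cref{thm:rep}-type glueing, so I may assume $X=\Sp A$ is Liu. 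Then $Y$ is Liu by \cref{thm:Liumoreq}, $\mathcal{A}$ is the quasi-coherent sheaf associated to the Liu $k$-algebra $B:=H^0(Y,\mathcal{O}_Y)$ by \cref{thm:qcohpush}, and $\Specrel_X\mathcal{A}=\Sp B$. The natural morphism $Y\rightarrow\Sp B$ over $\Sp A$ is the one classifying the identity of $B$; by \cref{thm:Liu} the functor of global sections is an anti-equivalence $\LiuCat_k\rightarrow\LiuAlgCat_k$, so $Y\rightarrow\Sp B$ is an isomorphism. One then checks these identifications are compatible with the glueing data on overlaps of Liu domains, which is routine functoriality.

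Finally, fully faithfulness. On the affine (Liu) piece, a morphism $\Specrel_X\mathcal{A}\rightarrow\Specrel_X\mathcal{B}$ over $X$ is, by the universal property defining $\Specrel$ and \cref{thm:Liu}, the same as a morphism of Liu $k$-algebras $H^0(\Sp A,\mathcal{B})\rightarrow H^0(\Sp A,\mathcal{A})$, i.e. a morphism $\mathcal{B}|_{\Sp A}\rightarrow\mathcal{A}|_{\Sp A}$ of sheaves of Liu $k$-algebras (using \cref{prop:morcohtosheaf} to pass between the module-level and sheaf-level statements, and \cref{cor:conttoLiu} to drop boundedness hypotheses); these glue over a G-covering of $X$ by Liu domains because both sides satisfy the sheaf property. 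I expect the main obstacle to be purely bookkeeping: checking that every identification above is natural in the appropriate variable and compatible with restriction to smaller Liu domains, so that the local anti-equivalences of \cref{thm:Liu} glue to a global one on the separated space $X$. No single step is deep — the real content has already been extracted in \cref{thm:Liumoreq}, \cref{thm:qcohpush}, \cref{cor:pushqcoh} and \cref{thm:Liu} — but the proof must route carefully through the separatedness hypothesis, which is what makes the Liu domains into a well-behaved G-covering and makes \cref{prop:rep} and \cref{def:relspec} available.
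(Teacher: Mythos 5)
Your argument is correct and follows essentially the same route the paper intends: the paper states this corollary as an immediate consequence of the preceding proposition on $\Specrel_X$ (in the style of the cited Stacks Project argument for affine morphisms), and you have simply filled in the standard verifications using exactly the ingredients the paper has already assembled (\cref{thm:Liumoreq}, \cref{thm:qcohpush}, \cref{cor:pushqcoh}, \cref{thm:Liu}, and parts (1) and (3) of the proposition). No gap; the only remark is that $\pi$ being Liu already follows directly from part (1) of the proposition and \cref{def:Liumorp}, so your extra appeal to \cref{thm:Liumoreq} there is harmless but unnecessary.
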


\section{Quasi-Liu morphisms}\label{sec:quasiLiu}

Let $k$ be a complete non-Archimedean valued field.

\begin{definition}\label{def:quasiLiuspace}
A $k$-analytic space $X$ is called \emph{quasi-Liu} if the following conditions hold:
\begin{enumerate}
	\item $X$ is quasi-compact.
	\item $H^0(X,\mathcal{O}_X)$ is a Liu $k$-algebra.
	\item There is a Liu $k$-analytic space $\Sp B$ and a morphism $i:X\rightarrow \Sp B$, which realizes $X$ as an analytic domain in $\Sp B$. 
\end{enumerate}
\end{definition}

\begin{proposition}\label{prop:quasiLiuglobal}
Let $X$ be a quasi-Liu $k$-analytic space. Then the natural morphism $X\rightarrow \Sp H^0(X,\mathcal{O}_X)$ is an analytic domain embedding.
\end{proposition}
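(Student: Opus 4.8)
Set $A:=H^0(X,\mathcal O_X)$. The plan is to use the Liu space $\Sp B$ from condition (3) of \cref{def:quasiLiuspace} as an ambient space relating $X$ and $\Sp A$. By \cref{thm:Liu} the natural morphism $j\colon X\to\Sp A$ is the one inducing $\mathrm{id}_A$ on global sections, and the analytic domain embedding $i\colon X\hookrightarrow\Sp B$ induces $B\to A$, hence by \cref{thm:Liu} a morphism $p\colon\Sp A\to\Sp B$; since $p\circ j$ and $i$ induce the same map $B\to A$, \cref{thm:Liu} gives $p\circ j=i$. As $X$ is a quasi-compact analytic domain of $\Sp B$, I would fix a finite covering $X=\bigcup_{\ell=1}^{n}V_\ell$ with each $V_\ell=\Sp B_\ell$ an affinoid (hence Liu) domain of $\Sp B$ contained in $X$, and put $W_\ell:=p^{-1}(V_\ell)=\Sp A\times_{\Sp B}\Sp B_\ell$. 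Each $W_\ell$ is an analytic domain of $\Sp A$ (being the preimage of one), it is Liu by \cref{prop:LiuCart}, and $H^0(W_\ell,\mathcal O)=A\hat\otimes_B B_\ell$ by \cref{lma:Liutensor}; moreover $j^{-1}(W_\ell)=(p\circ j)^{-1}(V_\ell)=i^{-1}(V_\ell)=V_\ell$ because $V_\ell\subseteq X$.

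The heart of the argument is the identity $A\hat\otimes_B B_\ell\cong B_\ell$. I would derive it from quasi-coherence of $i_*\mathcal O_X$: the morphism $i$ is quasi-compact and quasi-separated, and on every affinoid domain $\Sp C$ of $X$ one has $\mathcal O_X|_{\Sp C}=\mathcal O_{\Sp C}$, which is quasi-coherent by \cref{ex:coharequas}; hence by \cref{cor:pushqcoh} the sheaf $i_*\mathcal O_X$ is quasi-coherent on $\Sp B$, and since $H^0(\Sp B,i_*\mathcal O_X)=H^0(X,\mathcal O_X)=A$ it is the quasi-coherent sheaf $\widetilde A$ associated to $A$. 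Evaluating $i_*\mathcal O_X=\widetilde A$ over the Liu domain $V_\ell\subseteq X$ in two ways: \cref{cor:qcohLiusecliudom} gives $H^0(V_\ell,i_*\mathcal O_X)=A\hat\otimes_B B_\ell$, whereas directly $H^0(V_\ell,i_*\mathcal O_X)=H^0(i^{-1}V_\ell,\mathcal O_X)=H^0(V_\ell,\mathcal O_{\Sp B})=B_\ell$; unwinding the identifications, the isomorphism $A\hat\otimes_B B_\ell\to B_\ell$ thus obtained is $a\otimes b\mapsto(a|_{V_\ell})\,b$.

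To conclude: since $j^{-1}(W_\ell)=V_\ell$ for all $\ell$, the morphism $j$ factors through the analytic domain $W:=\bigcup_{\ell}W_\ell$ of $\Sp A$, say $j=\iota\circ j'$ with $j'\colon X\to W$ and $j'^{-1}(W_\ell)=V_\ell$. The restriction $j'|_{V_\ell}\colon V_\ell\to W_\ell$ is a morphism of Liu spaces, and because $j$ induces $\mathrm{id}_A$ on global sections, a short computation shows the map it induces on global sections is exactly the isomorphism $A\hat\otimes_B B_\ell\to B_\ell$ of the previous step; hence $j'|_{V_\ell}$ is an isomorphism by \cref{thm:Liu}. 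As $\{V_\ell\}$ and $\{W_\ell\}$ are finite G-coverings of $X$ and $W$ with $j'^{-1}(W_\ell)=V_\ell$ and $j'|_{V_\ell}$ an isomorphism for each $\ell$, the morphism $j'$ is an isomorphism, and therefore $j=\iota\circ j'$ is an analytic domain embedding.

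The main obstacle is the identity $A\hat\otimes_B B_\ell=B_\ell$ in the second step, which is the non-Archimedean analogue of the scheme-theoretic fact $\Gamma(X)_f=\Gamma(X_f)$ for a quasi-compact quasi-separated $X$; all the real work there is done by \cref{cor:pushqcoh}, whose proof absorbs — through \cref{lma:trans} — the transversality bookkeeping that would otherwise be the sticking point. Everything else reduces to \cref{thm:Liu}, \cref{prop:LiuCart}, \cref{lma:Liutensor}, and the formal glueing of analytic domains.
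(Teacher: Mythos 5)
Your proof is correct, but it implements the key step differently from the paper. Both arguments begin identically: factor the embedding $i\colon X\to\Sp B$ through $j\colon X\to\Sp A$ via \cref{thm:Liu}, and reduce to showing that over subdomains of $\Sp B$ contained in $X$ the map $j$ is an isomorphism onto their preimages in $\Sp A$. The paper does this pointwise: around each $x\in X$ it chooses rational domains $V_1,\ldots,V_m\subseteq X$ of $\Sp B$ whose union is a neighborhood of $x$, proves $H^0(V,\mathcal{O})\cong A\{r^{-1}f/g\}$ by writing $A$ as the equalizer of the \v{C}ech complex of a finite affinoid cover of $X$ and tensoring with the rational localization via \cref{thm:liut}, and concludes with the quasi-net description of analytic domains. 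You work globally: a single finite cover of $X$ by affinoid domains $V_\ell=\Sp B_\ell$ of $\Sp B$, the identity $A\hat{\otimes}_B B_\ell\cong B_\ell$ extracted from quasi-coherence of $i_*\mathcal{O}_X$ (\cref{cor:pushqcoh} together with \cref{cor:qcohLiusecliudom}), and a gluing of the inverse isomorphisms over $W=\bigcup_\ell W_\ell$. The substance is the same---\cref{cor:pushqcoh} absorbs exactly the equalizer-and-transversality computation the paper performs by hand, with \cref{thm:liut} as the engine in both cases---but your packaging treats arbitrary affinoid subdomains rather than only rational ones and replaces the pointwise neighborhood bookkeeping by one explicit gluing, at the price of invoking the heavier sheaf-theoretic corollaries. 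Two minor points deserve a sentence each: $W$ is an analytic domain of $\Sp A$ and $\{W_\ell\}$ is a G-covering of it because each $W_\ell=p^{-1}(V_\ell)$ is closed ($V_\ell$ is compact and $\Sp B$ Hausdorff), so the quasi-net condition for the finite union is automatic; and the local inverses $(j'|_{V_\ell})^{-1}$ agree on overlaps because $j'^{-1}(W_m)=V_m$ forces $j'$ to restrict to an isomorphism $V_\ell\cap V_m\to W_\ell\cap W_m$, after which \cite[Proposition~1.3.2]{Berk93} glues them into the desired inverse of $j'$.
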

\begin{proof}
Let $Y=\Sp B$ be a Liu $k$-analytic space such that there is a morphism $i:X\rightarrow Y$, which is an analytic domain embedding.  Now we have a natural homomorphism $B\rightarrow A$ given by the restriction map $B=H^0(Y,\mathcal{O}_Y)\rightarrow A=H^0(X,\mathcal{O}_X)$. In particular, we get a factorization $X\rightarrow \Sp A \rightarrow Y$ of $i$ by \cref{thm:Liu}. Now it remains to show that $X\rightarrow \Sp A$ is an analytic domain. Take $x\in X$. We can find rational domains $V_1,\ldots,V_m$ of $Y$ contained in $X$ such that $x\in \cap_i V_i$ and $\cup_i V_i$ is a neighborhood of $x$ in $Y$. Let $U_i$ be the rational domain of $\Sp A$ induced by $V_i$. We claim that $U_i\subseteq X$. Assuming this claim, then we find that $x\in \cup_i U_i$ and $\cup_i U_i\subseteq X$ is a neighborhood of $x$ in $\Sp A$. We conclude that $X\rightarrow \Sp A$ is indeed an analytic domain.

To prove the claim, we will fix some $i$ and omit the indices from $V_i$, $U_i$.
We write $V=\Sp B\{r^{-1}f/g\}$,
where $f=(f_1,\ldots,f_n)$ is a tuple of elements in $B$, $r=(r_1,\ldots,r_n)$ is a tuple of positive real numbers and $g$ is an element in $B$ such that $f_j$, $g$ do not have a common zero. Then $U=\Sp A\{r^{-1}f/g\}$. Let $X'$ denote the analytic domain of $X$ consisting of points where $|f_j|\leq r_j|g|$ for all $j=1,\ldots,n$. As $V\subseteq X$, we could identify $X'$ with the analytic domain in $Y$ defined by the same inequalities. In particular, $X'$ is a Liu space.
Take a finite affinoid covering $\Sp A_i$ of $X$, we know that $A$ is the equalizer of $\prod_i A_i \rightrightarrows \prod_{i,j} A_{ij}$, where $\Sp A_{ij}=\Sp A_i\cap \Sp A_j$. By \cref{thm:liut}, $A\{r^{-1}f/g\}$ is the equalizer of $\prod_i A_i\{r^{-1}f/g\} \rightrightarrows \prod_{i,j} A_{ij}\{r^{-1}f/g\}$. As $\Sp A_i\{r^{-1}f/g\}$ is an affinoid covering of $X'$, we find an isomorphism $H^0(X',\mathcal{O}_{X'})\cong A\{r^{-1}f/g\}$. It induces an isomorphism
 $X'\rightarrow U$ by \cref{thm:Liu}, which is the inverse of the composition $U\rightarrow V\rightarrow X'$. In particular, we find that $U\rightarrow X$ is injective.
\end{proof}

\begin{lemma}\label{lma:quasiLiubase1}
Let $f:X\rightarrow Y$ be a morphism in $\AnaCat_k$. Assume that $Y$ is Liu and $X$ is quasi-Liu. Let $g:Y'\rightarrow Y$ be a Liu domain in $Y$. 
 Then $X':=X\times_Y Y'$ is also quasi-Liu.
\end{lemma}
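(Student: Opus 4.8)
The plan is to verify the three conditions of \cref{def:quasiLiuspace} for $X'$. Write $A=H^0(X,\mathcal{O}_X)$, $C=H^0(Y,\mathcal{O}_Y)$, $C'=H^0(Y',\mathcal{O}_{Y'})$. By hypothesis $A$ is a Liu $k$-algebra (as $X$ is quasi-Liu), and $C$, $C'$ are Liu $k$-algebras since $Y$ is Liu and $Y'$ is a Liu domain. The morphism $f$ and the inclusion $g\colon Y'\hookrightarrow Y$ induce on global sections $k$-algebra homomorphisms $C\to A$ (using \cref{thm:Liu}, as $Y$ is Liu) and $C\to C'$, so we may form $D:=A\hat{\otimes}_C C'$, which is a Liu $k$-algebra by \cref{lma:Liutensor}. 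I expect $\Sp D$ to be the ambient Liu space for condition (3), and $D$ itself to be $H^0(X',\mathcal{O}_{X'})$.

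For condition (3): by \cref{prop:quasiLiuglobal} the canonical map $\iota\colon X\to \Sp A$ is an analytic domain embedding, and by \cref{thm:Liu} the morphism $f$ factors as $X\xrightarrow{\iota}\Sp A\xrightarrow{\bar f}Y$ with $\bar f$ corresponding to $C\to A$. Since fibre products of Liu spaces are computed by completed tensor products (\cref{prop:LiuCart} and \cref{lma:Liutensor}, together with \cref{thm:Liu}), we get $\Sp A\times_Y Y'=\Sp D=\bar f^{-1}(Y')$, a Liu domain in $\Sp A$; hence $X'=X\times_Y Y'=X\times_{\Sp A}\Sp D$, and the base change $X'\to\Sp D$ of $\iota$ is again an analytic domain embedding, realizing $X'$ as an analytic domain in the Liu space $\Sp D$. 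For condition (1): $X$ is quasi-compact and, being quasi-Liu, separated, hence compact; since $X'=f^{-1}(Y')$ is the preimage of the compact (hence closed) subset $|Y'|\subseteq|Y|$, it is closed in $|X|$ and therefore quasi-compact.

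The heart of the argument is condition (2), and the key observation is that $f_*\mathcal{O}_X$ is quasi-coherent on $Y$. Indeed $f$ is quasi-compact ($X$ being quasi-compact) and quasi-separated ($X$ and $Y$ being separated), and by \cref{ex:coharequas} the sheaf $\mathcal{O}_X$ restricts to a quasi-coherent sheaf on every affinoid domain of $X$; thus \cref{cor:pushqcoh} applies and identifies $f_*\mathcal{O}_X$ with the quasi-coherent sheaf on $Y=\Sp C$ associated to the transversal $C$-module $A$. Since $Y'=\Sp C'$ is a Liu domain in $Y$, \cref{cor:qcohLiusecliudom} now gives
\[
	H^0(X',\mathcal{O}_{X'})=\mathcal{O}_X\bigl(f^{-1}(Y')\bigr)=(f_*\mathcal{O}_X)(Y')=A\hat{\otimes}_C C'=D,
\]
where the first equality uses $f^{-1}(Y')=X\times_Y Y'=X'$ and $\mathcal{O}_{X'}=\mathcal{O}_X|_{X'}$. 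As $D$ is a Liu $k$-algebra, this verifies condition (2), so $X'$ is quasi-Liu.

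I expect the only genuine work to be the verification that \cref{cor:pushqcoh} applies — that $f$ is quasi-compact and quasi-separated and that $\mathcal{O}_X$ is a sheaf of Banach algebras, quasi-coherent on affinoid domains — together with checking that the various identifications of sections with completed tensor products (through \cref{thm:Liu}, \cref{cor:pushqcoh} and \cref{cor:qcohLiusecliudom}) are the canonical ones; everything else is formal manipulation with fibre products and analytic domains. If desired, one can afterwards note that \cref{prop:quasiLiuglobal} then recovers $\Sp D=\Sp H^0(X',\mathcal{O}_{X'})$, consistent with the construction above.
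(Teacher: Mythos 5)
Your proof is correct, and it reaches the same key identification as the paper --- $H^0(X',\mathcal{O}_{X'})\cong A\hat{\otimes}_C C'$, a Liu algebra by \cref{lma:Liutensor} --- but packages the computation differently. The paper first reduces to the case where $f$ is the analytic domain embedding $X\to\Sp A$ (replacing $Y$ by $\Sp A=\Sp H^0(X,\mathcal{O}_X)$ and $Y'$ by $Y'\times_Y\Sp A$) and then runs the \v{C}ech argument by hand: it tensors the admissible exact sequence attached to a finite affinoid covering of $X$ with $H^0(Y',\mathcal{O}_{Y'})$ over $H^0(Y,\mathcal{O}_Y)$ and invokes \cref{thm:liut} and \eqref{eq:h0tensor} to conclude $H^0(X',\mathcal{O}_{X'})=H^0(Y',\mathcal{O}_{Y'})$. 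You instead stay over the original $Y$ and quote \cref{cor:pushqcoh} together with \cref{cor:qcohLiusecliudom}; since \cref{cor:pushqcoh} is itself proved by exactly this \v{C}ech-plus-transversality argument, the mathematical content is the same, but your route is shorter, avoids the reduction step, and matches how the paper itself argues in \cref{prop:quasi-Liu} immediately afterwards, so there is no circularity (both results precede the lemma). Your explicit verification of conditions (1) and (3) --- compactness of $f^{-1}(|Y'|)$ inside the compact Hausdorff space $|X|$, and the embedding $X'\hookrightarrow\Sp D$ obtained by base-changing $\iota$ along the Liu domain $\Sp D=\bar f^{-1}(Y')$ --- fills in details the paper leaves to a closing remark, and your ambient space $\Sp D$ is precisely the replaced $Y'$ in the paper's reduction, so the two proofs agree on every essential point.
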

\begin{proof}
Let $f':X'\rightarrow Y'$ be the base change of $f$.
It suffices to show that $H^0(X',\mathcal{O}_{X'})$ is a Liu $k$-algebra. By decomposing $X\rightarrow Y$ as in the proof of \cref{prop:quasiLiuglobal}, we have the commutative diagram:
\[
\begin{tikzcd}
X' \arrow[r] \arrow[d] & X \arrow[d] \\
Y'\times_Y \Sp H^0(X,\mathcal{O}_X) \arrow[d] \arrow[r] & \Sp H^0(X,\mathcal{O}_X) \arrow[d] \\
Y' \arrow[r]           & Y          
\end{tikzcd}\,.
\]
Replacing $Y$ by $\Sp H^0(X,\mathcal{O}_X)$ and $Y'$ by $Y'\times_Y \Sp H^0(X,\mathcal{O}_X)$, we may assume that $H^0(X,\mathcal{O}_{X})=H^0(Y,\mathcal{O}_Y)$ and $f$ is the analytic domain embedding $X\rightarrow H^0(X,\mathcal{O}_X)$ in \cref{prop:quasiLiuglobal}. 

We have the following commutative diagram:
\[
\begin{tikzcd}
X' \arrow[r,"g'"] \arrow[d,"f'"] \arrow[rd,"\square",phantom] & X \arrow[d,"f"] \\
Y' \arrow[r,"g"]                      & Y         
\end{tikzcd}\,.
\]
Take a finite affinoid G-covering $X_i$ of $X$, then we get an admissible exact sequence
\[
0\rightarrow H^0(Y,\mathcal{O}_Y)\rightarrow \prod_i H^0(X_i,\mathcal{O}_X)\rightarrow \prod_{i,j} H^0(X_{ij},\mathcal{O}_X)\,,
\]
where $X_{ij}:=X_i\cap X_j$. Taking the derived tensor $\otL_{H^0(Y,\mathcal{O}_Y)}H^0(Y',\mathcal{O}_{Y'})$ and applying \cref{thm:liut} and \eqref{eq:h0tensor}, we get an admissible exact sequence
\[
0\rightarrow H^0(Y',\mathcal{O}_{Y'})\rightarrow \prod_i H^0(g'^{-1}(X_i),\mathcal{O}_{X'})\rightarrow \prod_{i,j} H^0(g'^{-1}(X_{ij}),\mathcal{O}_{X'})\,.
\]
In particular,
\[
H^0(Y',\mathcal{O}_{Y'})=H^0(X',\mathcal{O}_{X'})
\]
and this algebra is a Liu algebra.
Also observe that the morphism $f':X'\rightarrow Y'$ satisfies the assumption of \cref{def:quasiLiuspace}(3) and $X'$ is quasi-Liu.
\end{proof}

\begin{definition}\label{def:quasiLiumor}
Let $f:X\rightarrow Y$ be a morphism of $k$-analytic spaces. We say $f$ is \emph{quasi-Liu} if for any Liu domain $Z$ in $Y$, $f^{-1}Z$ is quasi-Liu.
\end{definition}

\begin{proposition}
Let $f:X\rightarrow Y$ be a quasi-Liu morphism in $\AnaCat_k$. Then $f$ is separated and quasi-compact.
\end{proposition}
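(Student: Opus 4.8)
The plan is to reduce both separatedness and quasi-compactness of a quasi-Liu morphism $f:X\to Y$ to purely G-local statements on the target, since being quasi-Liu is itself defined G-locally via Liu domains. First I would note that $Y$ admits a G-covering by Liu domains (indeed by affinoid domains, which are Liu by the example after \cref{def:Liuspace}); call these $\{Z_i\}$. By definition, each $f^{-1}Z_i$ is quasi-Liu, hence in particular quasi-compact by \cref{def:quasiLiuspace}(1). Quasi-compactness of a morphism is a G-local property on the base, so $f$ is quasi-compact. This disposes of half the claim.

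For separatedness, the key observation is that a quasi-Liu space is by \cref{prop:quasiLiuglobal} (or directly by \cref{def:quasiLiuspace}(3)) an analytic domain in a Liu space, and Liu spaces are separated by \cref{def:Liuspace}(1); an analytic domain embedding into a separated space is a separated morphism, and a $k$-analytic space that embeds as an analytic domain into a separated $k$-analytic space is itself separated. Hence each $f^{-1}Z_i$ is a separated $k$-analytic space. Then I would invoke the G-local nature of separatedness of a morphism: a morphism $f:X\to Y$ is separated if and only if the diagonal $\Delta_f:X\to X\times_Y X$ is a closed immersion, and this can be checked after pulling back along the G-covering $\{Z_i\}$ of $Y$, i.e. $f$ is separated iff each restriction $f^{-1}Z_i\to Z_i$ is separated. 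Since each $Z_i$ is separated (being Liu) and each $f^{-1}Z_i$ is separated as an absolute space, the composite $f^{-1}Z_i\to Z_i\to \Sp k$ being separated together with $Z_i\to\Sp k$ separated forces $f^{-1}Z_i\to Z_i$ to be a separated morphism. Therefore $f$ is separated.

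The only genuine subtlety — and the step I expect to be the main obstacle to write cleanly rather than to prove — is the assertion that separatedness and quasi-compactness of a morphism are G-local on the target in the Berkovich framework of \cite{Berk93}. For quasi-compactness this is essentially the statement that $f^{-1}$ of a quasi-compact (indeed affinoid) domain is quasi-compact, which is immediate once every $Z_i$ can be refined to affinoid domains and $f^{-1}$ of an affinoid domain lies inside a finite union of the quasi-compact spaces $f^{-1}Z_j$. For separatedness I would appeal to the fact that the diagonal is a closed immersion iff it is so after base change along a G-covering, which follows from \cite[Proposition~1.3.2]{Berk93} and \cite[Theorem~1.2]{CT19} exactly as in the proof of \cref{thm:Liumoreq}. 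I would phrase the argument so as to lean on those already-cited descent facts rather than reprove them.

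\begin{proof}
Being quasi-Liu is a G-local property on the target by \cref{def:quasiLiumor}. Choose a G-covering $\{Z_i\}$ of $Y$ by affinoid domains; each $Z_i$ is a Liu $k$-analytic space, hence separated, and $f^{-1}Z_i$ is quasi-Liu. By \cref{def:quasiLiuspace}(1), $f^{-1}Z_i$ is quasi-compact, and by \cref{def:quasiLiuspace}(3) together with \cref{prop:quasiLiuglobal}, $f^{-1}Z_i$ embeds as an analytic domain into the Liu, hence separated, space $\Sp H^0(f^{-1}Z_i,\mathcal{O})$; therefore $f^{-1}Z_i$ is itself a separated $k$-analytic space. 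Since $Z_i\to \Sp k$ is separated and $f^{-1}Z_i\to\Sp k$ is separated, the morphism $f^{-1}Z_i\to Z_i$ is separated. Both quasi-compactness and separatedness of $f$ may be checked after the base change along the G-covering $\{Z_i\}$, using \cite[Proposition~1.3.2]{Berk93} and \cite[Theorem~1.2]{CT19} as in the proof of \cref{thm:Liumoreq}. We conclude that $f$ is separated and quasi-compact.
\end{proof}
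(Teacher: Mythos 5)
The paper offers no argument beyond declaring the statement obvious, and your proof is a correct elaboration of exactly that obvious route: the preimage of any affinoid (hence Liu) domain is quasi-Liu by definition, hence quasi-compact and, being an analytic domain of a separated Liu space, separated, and both properties of $f$ can be checked G-locally on the target. One small slip: G-locality of the quasi-Liu property on the target is \cref{prop:quasi-Liu} (which requires $Y$ separated), not \cref{def:quasiLiumor}, but you never actually need it---you only apply the definition to each Liu domain $Z_i$---so the argument is unaffected.
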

This is obvious.

\begin{proposition}\label{prop:quasi-Liu}
Let $f:X\rightarrow Y$ be a morphism of $k$-analytic spaces. Assume that $Y$ is separated.
The following are equivalent:
\begin{enumerate}
	\item $f$ is quasi-Liu.
	\item $f_*\mathcal{O}_X$ is a quasi-coherent sheaf of Liu $k$-algebras and the natural morphism $X\rightarrow \Specrel_Y f_*\mathcal{O}_X$ is quasi-compact and realizes $X$ as an analytic domain.
	\item $f_*\mathcal{O}_X$ is a quasi-coherent sheaf of Liu $k$-algebras on $Y$ and $X$ can be realized as an analytic domain in $\Specrel_Y \mathcal{A}$ through a quasi-compact morphism $X\rightarrow \Specrel_Y \mathcal{A}$ over $Y$, where $\mathcal{A}$ is a quasi-coherent sheaf of Liu $k$-algebras on $Y$.
\end{enumerate}
\end{proposition}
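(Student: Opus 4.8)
The structure will follow the usual pattern for ``quasi-affine-type'' characterizations: prove the cycle $(1)\Rightarrow(2)\Rightarrow(3)\Rightarrow(4)\Rightarrow(1)$. The implication $(1)\Rightarrow(2)$ is trivial (take any G-covering of $Y$ by Liu domains, e.g.\ an affinoid covering). The implication $(3)\Rightarrow(4)$ is also formal, taking $\mathcal{A}=f_*\mathcal{O}_X$. So the real content is in $(2)\Rightarrow(3)$ and $(4)\Rightarrow(1)$.

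For $(2)\Rightarrow(3)$: fix a G-covering $\{U_i\}$ by Liu domains with each $f^{-1}(U_i)$ quasi-Liu. I would first show $f_*\mathcal{O}_X$ is a quasi-coherent sheaf of Liu $k$-algebras using \cref{prop:qcohLiualoc}: it suffices to check on each $U_i=\Sp A_i$ that $H^0(f^{-1}U_i,\mathcal{O}_X)$ is a Liu $k$-algebra --- which holds by \cref{def:quasiLiuspace}(2) --- and that $(f_*\mathcal{O}_X)|_{U_i}$ is quasi-coherent, which follows from \cref{cor:pushqcoh} applied to the restricted morphism $f^{-1}U_i\rightarrow U_i$ (this morphism is quasi-compact and separated since $f^{-1}U_i$ is quasi-Liu, hence embeds as an analytic domain in a Liu space, hence is separated and quasi-compact, so has a finite affinoid covering whose pieces have quasi-coherent restrictions of $\mathcal{O}_X$ by \cref{thm:qcohpush} applied over the affinoid pieces of $U_i$). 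Once $\mathcal{A}:=f_*\mathcal{O}_X$ is known to be a quasi-coherent sheaf of Liu $k$-algebras, $\Specrel_Y\mathcal{A}$ exists by \cref{def:relspec} (using that $Y$ is separated), and there is a canonical morphism $j:X\rightarrow \Specrel_Y\mathcal{A}$ over $Y$ corresponding to the identity $\mathcal{A}\rightarrow f_*\mathcal{O}_X$. It remains to show $j$ is quasi-compact and an analytic domain embedding. Both properties are G-local on the base $\Specrel_Y\mathcal{A}$, equivalently G-local on $Y$ (since $\pi:\Specrel_Y\mathcal{A}\rightarrow Y$ restricts over $U_i$ to $\Sp H^0(f^{-1}U_i,\mathcal{O}_X)\rightarrow U_i$ by the preceding proposition), so we may restrict to each $U_i$ and are reduced to: $f^{-1}U_i\rightarrow \Sp H^0(f^{-1}U_i,\mathcal{O}_X)$ is quasi-compact and an analytic domain embedding. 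This is exactly \cref{prop:quasiLiuglobal} together with quasi-compactness from \cref{def:quasiLiuspace}(1).

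For $(4)\Rightarrow(1)$: given a Liu domain $Z\hookrightarrow Y$, I must show $f^{-1}Z$ is quasi-Liu. Since $\mathcal{A}$ is a quasi-coherent sheaf of Liu $k$-algebras, $\Specrel_Z(\mathcal{A}|_Z)=Z\times_Y\Specrel_Y\mathcal{A}$ by the base-change property, and $\Specrel_Z(\mathcal{A}|_Z)\rightarrow Z$ is a Liu morphism by \cref{cor:Liuequivalence}; as $Z$ is Liu, the total space $W:=\Specrel_Z(\mathcal{A}|_Z)$ is a Liu $k$-analytic space. Base-changing the given quasi-compact analytic domain embedding $X\hookrightarrow \Specrel_Y\mathcal{A}$ along $Z\hookrightarrow Y$ yields a quasi-compact analytic domain embedding $f^{-1}Z\hookrightarrow W$ into a Liu space; quasi-compactness of $f^{-1}Z$ follows since it is quasi-compact over the quasi-compact $W$. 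Finally $H^0(f^{-1}Z,\mathcal{O}_X)$ must be shown to be a Liu $k$-algebra: here I would invoke \cref{lma:quasiLiubase1} with $X\hookrightarrow\Specrel_Y\mathcal{A}$ in place of the original $f$ --- more precisely, apply that lemma to the morphism $X\rightarrow \Specrel_Y\mathcal{A}$ (whose target, after restricting to an affinoid covering of $Y$, is Liu) and the Liu domain $\Specrel_Z(\mathcal{A}|_Z)$ inside it --- after noting that the construction is G-local on $Y$ so we may assume $Y$ affinoid and $\Specrel_Y\mathcal{A}$ Liu. This gives that $f^{-1}Z$ is quasi-Liu, completing the cycle.

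\textbf{Main obstacle.} The delicate point is $(2)\Rightarrow(3)$, specifically verifying that $(f_*\mathcal{O}_X)|_{U_i}$ is quasi-coherent when $f^{-1}U_i$ is merely quasi-Liu rather than Liu: one cannot directly apply \cref{thm:qcohpush} (which needs the source to be Liu), and must instead argue via \cref{cor:pushqcoh}, which requires knowing $\mathcal{O}_X$ restricts to a quasi-coherent sheaf on each affinoid domain of $f^{-1}U_i$ --- true since affinoid domains are Liu and $\mathcal{O}$ on a Liu space is quasi-coherent by \cref{ex:coharequas} --- and that $f^{-1}U_i\rightarrow U_i$ is quasi-compact and quasi-separated, which needs the embedding of $f^{-1}U_i$ into a Liu space to control separatedness. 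The book-keeping to ensure ``analytic domain embedding'' descends correctly through the relative spectrum (as opposed to just an immersion) is the other place where care is needed, relying on \cref{prop:quasiLiuglobal} doing the real geometric work locally.
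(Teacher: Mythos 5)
Your overall route is the same as the paper's: the paper declares $(3)\Rightarrow(4)\Rightarrow(1)\Rightarrow(2)$ clear and proves only $(2)\Rightarrow(3)$, exactly by the combination you propose --- quasi-coherence of $f_*\mathcal{O}_X$ over each $U_i$ via \cref{cor:pushqcoh}, promotion to a quasi-coherent sheaf of Liu $k$-algebras via \cref{prop:qcohLiualoc}, and the analytic-domain/quasi-compactness assertion for $X\rightarrow \Specrel_Y f_*\mathcal{O}_X$ from \cref{prop:quasiLiuglobal} after restricting over the $U_i$. One point your writeup glosses over and the paper makes explicit: before \cref{prop:qcohLiualoc} can be applied, $f_*\mathcal{O}_X$ must be a \emph{sheaf of Banach algebras}, i.e.\ one needs its sections over Liu domains (at least those inside the $U_i$) to be Banach; the paper gets this from \cref{lma:quasiLiubase1}, which shows that preimages of Liu domains contained in a given $U_i$ are again quasi-Liu, hence have Liu (in particular Banach) algebras of global sections.

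The only genuine blemish is your treatment of $(4)\Rightarrow(1)$. The condition that $H^0(f^{-1}Z,\mathcal{O}_X)$ is a Liu $k$-algebra for every Liu domain $Z\subseteq Y$ is not something to be re-proved: it is literally contained in the hypothesis of (4), since $f_*\mathcal{O}_X$ is assumed to be a quasi-coherent sheaf of Liu $k$-algebras and \cref{def:qcohshLiualg} demands exactly that $H^0(Z,f_*\mathcal{O}_X)$ be Liu for every Liu domain $Z$ in $Y$. Your substitute argument via \cref{lma:quasiLiubase1} is therefore unnecessary, and as written it does not work: the reduction ``the construction is G-local on $Y$, so we may assume $Y$ affinoid'' breaks down because a Liu domain $Z$ of $Y$ need not be contained in any affinoid (gluing Liu-ness of $H^0$ over pieces is precisely the non-trivial content of \cref{prop:qcohLiualoc}), and applying \cref{lma:quasiLiubase1} requires knowing beforehand that the source is quasi-Liu over the relevant Liu base, which in turn needs the very Liu-algebra property you set out to establish. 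Since the needed fact is immediate from the hypothesis, the implication $(4)\Rightarrow(1)$ survives (quasi-compactness and the analytic-domain embedding of $f^{-1}Z$ into the Liu space $\pi^{-1}(Z)=\Sp H^0(Z,\mathcal{A})$ follow from base change as you say), but you should delete the detour rather than rely on it.
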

\begin{proof}
It is clear that (2) $\implies$ (3) $\implies$ (1).

(1) $\implies$ (2): 
Observe that $f_*\mathcal{O}_X$ is quasi-coherent by \cref{cor:pushqcoh}. It is a quasi-coherent sheaf of Liu $k$-algebras by \cref{lma:quasiLiubase1}. The last assertion follows from \cref{prop:quasiLiuglobal}.
\end{proof}

\begin{proposition}\label{prop:compqL}
Let $f:X\rightarrow Y$, $g:Y\rightarrow Z$ be morphisms in $\AnaCat_k$.
If $f$ is quasi-Liu and $g$ is Liu, then $g\circ f$ is quasi-Liu.
\end{proposition}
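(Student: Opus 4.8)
The plan is to reduce immediately to the local situation on $Z$, then apply the characterization of quasi-Liu morphisms from \cref{prop:quasi-Liu}, together with \cref{lma:quasiLiubase1}. Concretely: since being quasi-Liu is a property about preimages of Liu domains in the target, and we want to show $g\circ f$ is quasi-Liu, let $W$ be an arbitrary Liu domain in $Z$. We must show $(g\circ f)^{-1}W = f^{-1}(g^{-1}W)$ is quasi-Liu. Because $g$ is Liu, $g^{-1}W$ is a Liu domain in $Y$; call it $Y' = \Sp A$, a Liu $k$-analytic space. So after replacing $Y$ by $Y'$ and $X$ by $f^{-1}Y'$, the statement becomes: if $f:X\rightarrow \Sp A$ is quasi-Liu (here using that the restriction of a quasi-Liu morphism over a Liu domain is again quasi-Liu, which is immediate from the definition) and $\Sp A$ is Liu, then $X$ is quasi-Liu as a $k$-analytic space.

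For this reduced claim, I would argue directly from \cref{def:quasiLiuspace}. Applying the definition of quasi-Liu morphism to the Liu domain $\Sp A \subseteq \Sp A$ itself, we get that $f^{-1}(\Sp A) = X$ is quasi-Liu — wait, that is only when $\Sp A$ is viewed as a Liu domain in the target, which it is. So actually, once we know $g^{-1}W$ is itself Liu, the definition of $f$ being quasi-Liu, applied to the Liu domain $g^{-1}W$ of $Y$, directly yields that $f^{-1}(g^{-1}W)$ is quasi-Liu. This is exactly $(g\circ f)^{-1}W$. Hence $g\circ f$ is quasi-Liu by \cref{def:quasiLiumor}.

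So the argument is essentially a one-line unwinding of definitions: the only content is that $g$ being Liu converts Liu domains of $Z$ into Liu domains of $Y$ (\cref{def:Liumorp}), and then $f$ being quasi-Liu handles the preimage. The main "obstacle" — which is really a matter of bookkeeping rather than a genuine difficulty — is making sure the set-theoretic identity $(g\circ f)^{-1}W = f^{-1}(g^{-1}W)$ is compatible with the analytic domain structures, i.e. that preimages of analytic domains compose correctly; this is standard for morphisms of Berkovich spaces. No appeal to \cref{prop:quasi-Liu} or \cref{lma:quasiLiubase1} is actually needed for this particular composition statement, though one could phrase the proof through the sheaf-theoretic criterion if preferred. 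I would write:

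Let $W$ be a Liu domain in $Z$. Since $g$ is Liu, $g^{-1}W$ is a Liu domain in $Y$. Since $f$ is quasi-Liu, $f^{-1}(g^{-1}W)$ is quasi-Liu. As $(g\circ f)^{-1}W = f^{-1}(g^{-1}W)$, we conclude that $g\circ f$ is quasi-Liu.
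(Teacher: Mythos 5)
Your argument is correct and coincides with the paper's own proof: both simply note that $g$ being Liu turns a Liu domain $W\subseteq Z$ into a Liu domain $g^{-1}W\subseteq Y$, and then the definition of $f$ being quasi-Liu gives that $(g\circ f)^{-1}W=f^{-1}(g^{-1}W)$ is quasi-Liu. The digression about \cref{prop:quasi-Liu} and \cref{lma:quasiLiubase1} is unnecessary, as you yourself conclude, and can be omitted.
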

\begin{proof}
We need to show that the inverse image of a Liu domain $U$ in $Z$ by $g\circ f$ is quasi-Liu. But $g^{-1}(U)$ is Liu and we find that $f^{-1}(g^{-1}(U))$ is quasi-Liu by definition.
\end{proof}

\section{Open problems}\label{sec:open}
Let $k$ be a complete non-Archimedean valued field.

We give a list of unsolved problems related to Liu $k$-algebras and Liu morphisms.

\begin{question}
Is there a global version of Zariski's main theorem in non-Archimedean geometry?
\end{question}
A local version is proved by Ducros in \cite[Théorème~3.2]{Duc07} based on Temkin's graded reduction. This theorem roughly says that a quasi-finite morphism of separated $k$-analytic spaces can be written locally as the composition of an étale morphism, an analytic domain embedding and a finite morphism. This theorem, however, does not tell us much information about the global structure of a quasi-finite morphism, in contrast to the classical Zariski's main theorem (\cite[\href{https://stacks.math.columbia.edu/tag/02LR}{Tag 02LR}]{stacks-project}). 

We would like to know if the following holds: 
\begin{conjecture}
Let $f:X\rightarrow S$ be a quasi-finite morphism of quasi-compacted, separated $k$-analytic spaces. Then we can decompose $f$ into $h\circ i\circ g$, where $g:X\rightarrow Y$ is finite, $i:Y\rightarrow Z$ is a quasi-compact analytic domain embedding, $h:Z\rightarrow S$ is étale.
\end{conjecture}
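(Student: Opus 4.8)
The plan is to globalise Ducros's local form of Zariski's main theorem (\cite[Théorème~3.2]{Duc07}) using the relative-spectrum formalism of \cref{sec:Liumor}. First I would reduce to the case where $S=\Sp A$ is $k$-affinoid: the three properties appearing in the conclusion (finite, quasi-compact analytic domain embedding, étale) are each G-local on the target, so, granting that factorisations obtained over a G-covering of $S$ can be glued, it suffices to treat a single affinoid. One may also base change along a suitable $k_r/k$ and descend (using \cref{prop:flatkr} and \cref{thm:Day1}), and pass to $X^{\Red}$, so as to assume that $k$ is non-trivially valued and that $X$, $S$ are strict.

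Over such an $S$, cover $X$ by finitely many affinoid domains $V_1,\dots,V_n$; each $V_\ell\to S$ is separated and quasi-finite, so Ducros's theorem (after refining the covering) yields factorisations $V_\ell \xrightarrow{g_\ell} Y_\ell \xrightarrow{i_\ell} Z_\ell \xrightarrow{h_\ell} S$ with $g_\ell$ finite, $i_\ell$ an analytic domain embedding and $h_\ell$ étale. I would then try to assemble these into a single diagram $X\xrightarrow{g} Y\xrightarrow{i} Z\xrightarrow{h} S$: glue the étale charts $h_\ell$ together with their transition data over the overlaps $V_\ell\cap V_m$ into one étale morphism $h:Z\to S$ (a descent problem on the small étale site of $S$, where \cref{thm:rep} can be used to glue the resulting representable functor), take $Y\subseteq Z$ to be the union of the images of the $Y_\ell$, and glue $g$ from the $g_\ell$. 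To see that the glued $g:X\to Y$ is finite I would argue sheaf-theoretically: $X\to Y$ is at worst quasi-Liu (being finite onto the affinoid pieces of $Y$, and this property is G-local), so by \cref{prop:quasi-Liu} it factors through $\Specrel_Y f_*\mathcal{O}_X$ as an analytic domain embedding; one then replaces $f_*\mathcal{O}_X$ by a coherent, i.e. module-finite, subalgebra — legitimate because $X$ is quasi-compact and the relevant algebras are Noetherian (\cref{prop:Liualgbasic}, \cref{thm:cohLiu}) — and concludes with \cref{thm:finiteLiu}.

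The \textbf{main obstacle} is exactly this gluing. The étale charts $Z_\ell$ produced by Ducros are not canonical, so on each double overlap one must choose an identification of the two charts compatible simultaneously with the domain embeddings $i_\ell$ and with the finite maps $g_\ell$, and then verify that these choices cohere over triple overlaps; this is a genuine étale-descent statement which, unlike in scheme theory, cannot be short-circuited by first compactifying $X$ to a space finite over $S$ — such a compactification may well fail to exist in Berkovich geometry, which is precisely why an étale tail $h$ appears in the conjecture rather than $f$ factoring simply as a domain embedding followed by a finite morphism. A secondary, also non-trivial, point is ensuring the glued $g$ is finite rather than merely quasi-finite; I expect this to reduce, after the étale base change along $h$, to a finiteness statement for $f_*\mathcal{O}_X$ provable within the Liu-algebra coherence theory of \cref{sec:Liusp}.
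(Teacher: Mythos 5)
There is no proof of this statement in the paper to compare against: it appears in \cref{sec:open} precisely as an \emph{open} conjecture, stated after the remark that only Ducros's local version \cite[Théorème~3.2]{Duc07} is known. So any complete argument here would be new mathematics, and your proposal does not supply one. The decisive step --- assembling the non-canonical local factorisations $V_\ell\to Y_\ell\to Z_\ell\to S$ produced by Ducros into a single global $X\to Y\to Z\to S$ --- is exactly the content of the conjecture, and you correctly identify it as unresolved rather than prove it. Note also that the tools you invoke for it cannot do the job as stated: \cref{thm:rep} glues a presheaf only along subfunctors represented by \emph{open or closed analytic domains}, so it says nothing about identifying two étale charts $Z_\ell$, $Z_m$ over an overlap, where one needs a choice of isomorphism compatible with both $i_\ell,i_m$ and $g_\ell,g_m$ plus a cocycle condition on triple overlaps; nothing in the paper provides such étale descent. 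Your opening reduction is likewise circular: saying the three properties are G-local on the target ``granting that factorisations obtained over a G-covering of $S$ can be glued'' assumes the very gluing that constitutes the problem.

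A second genuine gap is the finiteness of the glued $g$. Replacing $f_*\mathcal{O}_X$ by ``a coherent, i.e.\ module-finite, subalgebra'' is the step that in scheme theory rests on writing a quasi-coherent algebra as a filtered colimit of its finite subalgebras and descending the open immersion into the relative spectrum to a finite stage. In the Banach/Liu setting this mechanism is unavailable: Noetherianity of Liu algebras (\cref{prop:Liualgbasic}) does not make a Banach subalgebra module-finite, filtered colimits are not well behaved in $\BanModCat_A$, and the failure of naive quasi-coherence is exactly the difficulty the paper emphasizes in its introduction (problem (1)) and the reason it works with Liu algebras and derived transversality in the first place. \Cref{prop:quasi-Liu} gives you an analytic domain embedding of $X$ into $\Specrel_Y f_*\mathcal{O}_X$, but no approximation of $f_*\mathcal{O}_X$ by coherent $\mathcal{O}_Y$-algebras through which that embedding factors. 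So both pillars of the sketch --- the étale gluing and the finiteness of $g$ --- remain open, consistent with the statement's status in the paper as a conjecture.
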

We hope to find suitable extra conditions on $f$, which guarantee that $i$ is a Liu domain embedding as well. 

\begin{question}
Are Liu $k$-algebras excellent?
\end{question}
In the case of affinoid algebras, this is proved by Ducros \cite{Duc09}. The author is not sure if Ducros' argument can be generalized to the current setting.

\begin{question}\label{ques:liudesc}
Can Liu morphisms be effectively descended with respect to fpqc (or Tate-flat) coverings?
\end{question}
In a previous version of this paper, the author claimed a proof. But as pointed out by the referee, the proof contains a gap. By \cite[Théorème~A]{Day21}, the essential difficulty is to treat the case of descending along a finite faithfully flat morphism of affinoid spaces.

\appendix 
\section{Results from Ben-Bassat--Kremnizer}\label{sec:BBK}

We slightly generalize a few results in \cite{BBK17}.

\begin{definition}\label{def:homepi}
Let $f:A\rightarrow B$ be a morphism in $\BanAlgCat_k$. We say $f$ is a \emph{homotopy epimorphism} if the following equivalent conditions are satisfied
\begin{enumerate}
	\item $\mathbb{L}f_*:\mathrm{D}^-(B)\rightarrow \mathrm{D}^-(A)$ is fully faithful.
	\item The natural morphism 
	\[
		\mathbb{L}f^*\circ \mathbb{L}f_*\rightarrow \mathrm{id}_{\mathrm{D}^-(B)}
	\] 
	is a natural equivalence.
	\item $B\otL_A B =B$.
\end{enumerate}
\end{definition}

\begin{definition}
Let $f:\Sp A\rightarrow \Sp B$ be a morphism in $\LiuCat_k$. We say $f$ is a \emph{homotopy monomorphism} if the corresponding morphism $B\rightarrow A$ in $\LiuAlgCat_k$ is a homotopy epimorphism (\cref{def:homepi}). 
\end{definition}

\begin{lemma}
Let $A\rightarrow B$ be a morphism in $\LiuAlgCat_k$. For any $r>0$, $f\in A$, we have the natural isomorphisms in $\mathrm{D}^{-}(A)$:
\[
	B\otL_A A\{ r^{-1}f \}\rightarrow B\hat{\otimes}_A A\{ r^{-1}f \}\,,\quad B\otL_A A\{ rf^{-1} \}\rightarrow B\hat{\otimes}_A A\{ rf^{-1} \}\,.
\]
\end{lemma}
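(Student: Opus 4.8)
The plan is to resolve the Laurent localization by a two-term complex of flat modules and then tensor. Write $C=A\{r^{-1}f\}$. By the very definition of a Weierstrass/Laurent localization there is a presentation
\[
0 \longrightarrow A\langle r^{-1}T\rangle \xrightarrow{\ T-f\ } A\langle r^{-1}T\rangle \longrightarrow A\{r^{-1}f\} \longrightarrow 0,
\]
and the first thing I would check is that it is \emph{admissible} exact. The module $A\langle r^{-1}T\rangle = A\hat{\otimes}_k k\langle r^{-1}T\rangle$ is a flat Banach $A$-module: $k\langle r^{-1}T\rangle$ is orthonormalizable, hence flat over $k$, the functor $-\hat{\otimes}_k k\langle r^{-1}T\rangle$ preserves admissible exactness, and on a Banach $A$-module $M$ it coincides with $-\hat{\otimes}_A A\langle r^{-1}T\rangle$. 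Thus the displayed sequence is a finite flat resolution of $A\{r^{-1}f\}$ in $\BanModCat_A$.

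Granting this, $B\otL_A A\{r^{-1}f\}$ is represented by the complex obtained by applying $B\hat{\otimes}_A(-)$ to the two free terms, namely $\bigl[\,B\langle r^{-1}T\rangle \xrightarrow{\,T-f\,} B\langle r^{-1}T\rangle\,\bigr]$ in cohomological degrees $-1$ and $0$. Since $\hat{\otimes}$ is right exact, $B\hat{\otimes}_A A\{r^{-1}f\}$ is the cokernel of $T-f$ on $B\langle r^{-1}T\rangle$, which is exactly $B\{r^{-1}f\}$; and the natural comparison map $B\otL_A A\{r^{-1}f\}\to B\hat{\otimes}_A A\{r^{-1}f\}$ is the canonical map from the above two-term complex to its $H^0$. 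Hence it is an isomorphism in $\mathrm{D}^-(A)$ if and only if $T-f$ is injective and strict on $B\langle r^{-1}T\rangle$, i.e.\ if and only if $0\to B\langle r^{-1}T\rangle\xrightarrow{T-f}B\langle r^{-1}T\rangle\to B\{r^{-1}f\}\to 0$ is admissible exact. The Laurent case is entirely parallel, with the resolution $0\to A\langle rS\rangle\xrightarrow{fS-1}A\langle rS\rangle\to A\{rf^{-1}\}\to 0$; there injectivity of $fS-1$ is immediate by comparing coefficients (the lowest one, hence all, must vanish).

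So everything reduces to the admissible exactness of these Koszul-type sequences (one over $A$, one over $B$) for Liu algebras. Over a $k$-affinoid algebra this is classical Weierstrass/Laurent division, \cite[Ch.~6]{BGR84}. For a general Liu algebra, say $B$, I would reduce to the affinoid case: choose a finite covering of $\Sp B$ by affinoid domains $\{\Sp B_i\}$; by Tate's acyclicity theorem (\cref{thm:Tateacyc}) the associated {\v C}ech complex $0\to B\to\prod_i B_i\to\prod_{i<j}B_{ij}\to\cdots$ is admissible exact, and tensoring it with the flat module $k\langle r^{-1}T\rangle$ gives the admissible exact complex with every entry replaced by its $\langle r^{-1}T\rangle$-version, whereas the {\v C}ech complex of the induced affinoid covering $\{\Sp B_i\{r^{-1}f\}\}$ of $\Sp B\{r^{-1}f\}$ is again admissible exact and identifies $B\{r^{-1}f\}$ with $B\hat{\otimes}_A A\{r^{-1}f\}$. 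A spectral-sequence argument in the quasi-abelian category $\BanCat_k$ then propagates injectivity and strictness of $T-f$ from the affinoid pieces to $B$ itself. (Alternatively, base change to $k_r$ as in \cref{prop:flatkr} and \cref{lma:Liutensor} reduces to a non-trivially valued ground field and strict algebras, where a strictly affinoid covering is available directly.) This reduction — division over a Liu rather than an affinoid algebra — is the only delicate point; the remainder is formal homological algebra in $\BanModCat_A$.
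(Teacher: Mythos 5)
Your reduction in the first half is the same as the paper's (and as \cite[Lemma~5.13]{BBK17}): resolve $A\{r^{-1}f\}$ by $A\langle r^{-1}T\rangle\xrightarrow{T-f}A\langle r^{-1}T\rangle$, use flatness of the disc algebra, and observe that everything comes down to $T-f$ (resp.\ $fS-1$) being a strict monomorphism on the disc algebra over a Liu algebra. Where you diverge is in how you prove that strictness. The paper's proof is two lines at this point: after reducing to non-trivially valued $k$ by base change (\cite[Proposition~2.1.2]{Berk12}), the image of $T-f$ is a principal ideal in a Liu algebra, hence closed by \cref{prop:Liualgbasic}, hence the map is strict. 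You instead descend from the affinoid case via a finite affinoid covering of $\Sp B$ and \cref{thm:Tateacyc}. This route does work, but two remarks. First, the ``spectral-sequence argument in $\BanCat_k$'' is the weak link as phrased: strictness does not propagate through spectral sequences in a quasi-abelian category for free, and you should replace it by the elementary estimate it encodes — for an injective bounded map of Banach modules, strictness is equivalent to being bounded below, and since $B\langle r^{-1}T\rangle\to\prod_i B_i\langle r^{-1}T\rangle$ is an admissible monomorphism (only the low-degree, ``sheaf condition'' part of \cref{thm:Tateacyc} tensored with the orthonormalizable module $k\langle r^{-1}T\rangle$ is needed) and $T-f$ is bounded below on each $B_i\langle r^{-1}T\rangle$ by the classical affinoid case, it is bounded below on $B\langle r^{-1}T\rangle$. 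Second, the second \v{C}ech complex you invoke, for the covering $\{\Sp B_i\{r^{-1}f\}\}$ of $\Sp B\{r^{-1}f\}$, is not needed for the statement (the comparison map is the projection of the two-term complex onto its $H^0$, so only strict injectivity matters) and it quietly requires knowing that the Weierstrass domain of a Liu space is Liu, an extra input you should not lean on here. In exchange for being longer, your argument avoids the structure theory of Liu algebras (Noetherianity and closedness of ideals) that the paper uses; the paper's argument buys brevity precisely from that input, together with the same $k_r$ base-change trick you mention as an alternative for trivially valued $k$.
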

\begin{proof}
We only treat the former. As in the case of affinoid algebras (\cite[Lemma~5.13]{BBK17}), it suffices to prove that the morphism
\[
	T-f: A\{ r^{-1}f \} \rightarrow A\{ r^{-1}f \}
\]
is a strict monomorphism. That this morphism is a monomorphism is well-known (and can be proved exactly as in the affinoid case).

To see $T-f$ is strict, by \cite[Proposition~2.1.2]{Berk12}, we could assume that $k$ is non-trivially valued. Then the image of $T-f$ is closed by \cref{prop:Liualgbasic}. Hence $T-f$ is strict.
\end{proof}
\begin{lemma}
Let $A\rightarrow B$ be a morphism in $\LiuAlgCat_k$. Let $f_1,\ldots,f_n,g\in A$ be elements that generate $A$. Let $r_1,\ldots,r_n\in \mathbb{R}_{>0}$, Then we have the natural isomorphism in $\mathrm{D}^{-}(A)$:
\[
	B\otL_A A\{ r_i^{-1}f_i/g \}\rightarrow B\hat{\otimes}_A A\{ r_i^{-1}f_i/g \}\,.
\]
\end{lemma}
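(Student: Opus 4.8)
The plan is to realize the rational localization $A\{r_i^{-1}f_i/g\}$ as an iterated Weierstrass localization \emph{inside} a Laurent localization, each factor of which is covered by the preceding lemma, and then to conclude using that transversality is stable under composition. For the structural decomposition: since $f_1,\dots,f_n,g$ generate the unit ideal of $A$, the continuous function $x\mapsto\max(|f_1(x)|,\dots,|f_n(x)|,|g(x)|)$ is nowhere vanishing on the quasi-compact space $\Sp A$, hence bounded below by some $c>0$; on the rational domain $\{x:|f_i(x)|\le r_i|g(x)|\text{ for all }i\}$ one therefore has $|g(x)|\ge c/R=:\epsilon>0$ with $R=\max(1,r_1,\dots,r_n)$. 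Thus this rational domain is contained in the Laurent domain $D:=A\{\epsilon g^{-1}\}=\{x:|g(x)|\ge\epsilon\}$, on which $g$ is a unit, so $h_i:=f_i/g\in D$; and since the inequalities $|f_i(x)|\le r_i|g(x)|$ ($i=1,\dots,n$) already force $|g(x)|\ge\epsilon$, the analytic domain $\Sp\bigl(D\{r_1^{-1}h_1\}\cdots\{r_n^{-1}h_n\}\bigr)$ of $\Sp A$ has the same underlying set as the rational domain. Analytic domain embeddings into $\Sp A$ being monomorphisms that represent the subfunctor of $\Hom(-,\Sp A)$ of morphisms with image in the given subset, the two analytic domains are canonically isomorphic over $\Sp A$, whence by \cref{thm:Liu} a canonical isomorphism of Banach $A$-algebras
\[
A\{r_i^{-1}f_i/g\}\;\cong\;D\{r_1^{-1}h_1\}\{r_2^{-1}h_2\}\cdots\{r_n^{-1}h_n\}\,.
\]
Writing $E_j:=D\{r_1^{-1}h_1\}\cdots\{r_{j-1}^{-1}h_{j-1}\}$ (so $E_1=D$ and $E_{n+1}$ is the rational domain algebra), each of $D$ and $E_{j+1}=E_j\{r_j^{-1}h_j\}$ is a closed subspace of the product of a Liu space with a closed disc, hence is a Liu $k$-algebra by \cref{lma:Liutensor} and \cref{thm:finiteLiu}.

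Now I would apply the preceding lemma along this chain. It gives that $A\to D=A\{\epsilon g^{-1}\}$ is transversal to $A\to B$, i.e. $B\otL_A D\xrightarrow{\sim}B\hat{\otimes}_A D$. For $1\le j\le n$, the Banach algebra $B\hat{\otimes}_A E_j$ is again Liu by \cref{lma:Liutensor}, so $E_j\to B\hat{\otimes}_A E_j$ is a morphism in $\LiuAlgCat_k$; applying the preceding lemma to this morphism and to the element $h_j\in E_j$ yields
\[
(B\hat{\otimes}_A E_j)\otL_{E_j}E_{j+1}\;\xrightarrow{\sim}\;(B\hat{\otimes}_A E_j)\hat{\otimes}_{E_j}E_{j+1}\,.
\]
By associativity of the derived completed tensor product, if also $E_j\otL_A B\xrightarrow{\sim}E_j\hat{\otimes}_A B$, then
\[
E_{j+1}\otL_A B=E_{j+1}\otL_{E_j}(E_j\otL_A B)=E_{j+1}\otL_{E_j}(B\hat{\otimes}_A E_j)=E_{j+1}\hat{\otimes}_{E_j}(B\hat{\otimes}_A E_j)=E_{j+1}\hat{\otimes}_A B\,,
\]
and the composite is the canonical comparison map. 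Since $E_1=D$ is transversal to $A\to B$, an induction on $j$ shows $E_{n+1}=A\{r_i^{-1}f_i/g\}$ is transversal to $A\to B$; concretely, the canonical morphism $B\otL_A A\{r_i^{-1}f_i/g\}\to B\hat{\otimes}_A A\{r_i^{-1}f_i/g\}$ is an isomorphism in $\mathrm{D}^-(A)$, as asserted.

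The genuine content is the first paragraph: checking the set-theoretic lower bound for $|g|$ on the rational domain and upgrading the resulting identification of subsets into a canonical isomorphism of Banach $A$-algebras. The remaining steps are bookkeeping, the one point to watch being that every intermediate algebra $E_j$, and every completed tensor product $B\hat{\otimes}_A E_j$, is again a Liu $k$-algebra, so that the preceding lemma genuinely applies.
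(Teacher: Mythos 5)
Your proof is correct and is essentially the intended argument: the paper's own proof is just a citation of \cite[Lemma~5.14]{BBK17}, whose point is exactly your reduction --- use the no-common-zero hypothesis and compactness of $\Sp A$ to get $|g|\ge\epsilon$ on the rational subset, identify $A\{r_i^{-1}f_i/g\}$ with iterated Weierstrass localizations of the Laurent localization $D=A\{\epsilon g^{-1}\}$, and then propagate the preceding (Weierstrass/Laurent) lemma along the chain by associativity of $\otL$ and $\hat{\otimes}$. One caution: your justification of the key identification appeals to rational and iterated Weierstrass localizations of a \emph{Liu} algebra being analytic domains of $\Sp A$ with the expected underlying sets, which the paper has not established at this point; it is safer (and avoids any appearance of circularity with the appendix) to prove the isomorphism $A\{r_i^{-1}f_i/g\}\cong D\{r_1^{-1}h_1\}\cdots\{r_n^{-1}h_n\}$ directly from the universal properties of these Banach $A$-algebra constructions, noting that for any bounded $A$-algebra map $\phi$ to a Banach algebra $C$ admitting elements $c_i$ with spectral radius $\le r_i$ and $\phi(g)c_i=\phi(f_i)$, pulling back points of $\Sp C$ to $\Sp A$ shows $|\phi(g)|\ge\epsilon$ everywhere on $\Sp C$, so $\phi(g)$ is invertible with $\rho(\phi(g)^{-1})\le\epsilon^{-1}$ and both algebras represent the same functor.
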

The proof goes exactly as \cite[Lemma~5.14]{BBK17}.

\begin{lemma}
Let $A$ be a Liu $k$-algebra. Let $A_1$, $A_2$, $B$ be Liu $k$-algebras over $A$.  Assume that
\begin{enumerate}
	\item $\Sp A_i\rightarrow \Sp A$ ($i=1,2$) are Liu domains.
	\item $\Sp A_1\cup \Sp A_2$ is also a Liu domain in $\Sp A$ with Liu $k$-algebra $C$.
	\item Let $A_0$ be the Liu $k$-algebra of the Liu domain $\Sp A_1\cap \Sp A_2$ (c.f. \cref{cor:intLiu}). Then the following natural morphisms are isomorphisms
	\[
		A_i \otL_A B\rightarrow A_i \hat{\otimes}_A B
	\]
	for $i=0,1,2$.
\end{enumerate}
Then we have a natural isomorphism
\[
	C\otL_A B\rightarrow C\hat{\otimes}_A B\,.
\]
\end{lemma}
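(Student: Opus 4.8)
The statement to prove is a Mayer--Vietoris-type gluing result: given Liu domains $\Sp A_1, \Sp A_2$ in $\Sp A$ whose union $\Sp A_1 \cup \Sp A_2$ is again a Liu domain with algebra $C$, and assuming each of $A_0, A_1, A_2$ (where $A_0$ is the algebra of $\Sp A_1 \cap \Sp A_2$) is transversal to $A \to B$, I want to deduce that $C$ is too. Let me think about how the pieces fit together.

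First, I recall the Tate acyclicity theorem (\cref{thm:Tateacyc}). The covering $\{\Sp A_1, \Sp A_2\}$ of $\Sp A_1 \cup \Sp A_2 = \Sp C$ is a finite G-covering by Liu domains (they are Liu domains in $\Sp A$, hence in $\Sp C$), and $C$ is a transversal $C$-module (trivially). So I get an admissible exact sequence
$$
0 \to C \to A_1 \times A_2 \to A_0 \to 0,
$$
where I have used that $\Sp A_1 \cap \Sp A_2 = \Sp A_0$ (using \cref{cor:intLiu} for the intersection and $A_1 \hat\otimes_C A_2 = A_0$ by \cref{lma:Liutensor}, or rather by the Tate sequence identification of the intersection algebra). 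This is the key structural input.

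Now I apply the functor $- \otL_A B$ to this admissible short exact sequence. Since the sequence is admissible exact in $\BanModCat_A$, applying the derived tensor product $- \otL_A B$ yields a distinguished triangle
$$
C \otL_A B \to (A_1 \otL_A B) \times (A_2 \otL_A B) \to A_0 \otL_A B \xrightarrow{+1}.
$$
By hypothesis (3), each of $A_1 \otL_A B$, $A_2 \otL_A B$, $A_0 \otL_A B$ is concentrated in degree $0$ and equals $A_i \hat\otimes_A B$ respectively. Hence $C \otL_A B$ is also concentrated in degree $0$ (the long exact sequence of cohomology of the triangle forces $H^{-i}(C \otL_A B) = 0$ for $i \geq 2$ immediately, and for $i = 1$ one uses that the map $A_1 \hat\otimes_A B \times A_2 \hat\otimes_A B \to A_0 \hat\otimes_A B$ is surjective — which I should check — coming from surjectivity of $A_1 \times A_2 \to A_0$ and right-exactness of $\hat\otimes$), and $H^0(C \otL_A B)$ is the kernel of that surjection. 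On the other hand, applying the ordinary completed tensor $- \hat\otimes_A B$ to the same admissible sequence gives (by flatness considerations, or just right-exactness plus the transversality of the outer terms identifying the derived and underived versions) that $C \hat\otimes_A B$ is exactly that same kernel. Comparing, the natural map $C \otL_A B \to C \hat\otimes_A B$ is an isomorphism.

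**Main obstacle.** The delicate point is the exactness/strictness bookkeeping: I need the sequence $0 \to C \to A_1 \times A_2 \to A_0 \to 0$ to be genuinely \emph{admissible} (strict) short exact so that $- \otL_A B$ produces a triangle with the expected terms, and I need the surjectivity of $A_1 \hat\otimes_A B \times A_2 \hat\otimes_A B \to A_0 \hat\otimes_A B$ to conclude that $H^{-1}(C\otL_A B)$ vanishes rather than just injects somewhere. The Tate acyclicity theorem gives admissibility of the original sequence; applying $\hat\otimes_A B$ preserves the surjection (right-exactness of $\hat\otimes$) but I should verify the strictness of the resulting maps, which is where I would invoke that $A_0 \hat\otimes_A B = A_0 \otL_A B$ sits in a triangle whose other terms are already identified — a standard but slightly fiddly diagram chase in the quasi-abelian derived category, exactly parallel to the proof of \cref{thm:Tateacyc} for general transversal modules. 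I expect no genuinely new idea is needed beyond carefully transcribing the corresponding argument from \cite[Lemma~5.34, Remark~5.35]{BBK17}.
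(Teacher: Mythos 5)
Your proof is correct and is precisely the argument the paper intends (its own proof of this lemma reads ``This is obvious''): apply \cref{thm:Tateacyc} to the covering $\{\Sp A_1,\Sp A_2\}$ of $\Sp C$ with the trivially transversal module $C$ to obtain the admissible exact sequence $0\to C\to A_1\times A_2\to A_0\to 0$ (identifying $A_0=A_1\hat{\otimes}_C A_2$ via \cref{lma:Liutensor} and \cref{cor:intLiu}), take the associated distinguished triangle after applying $-\otL_A B$, and use hypothesis (3) on the outer terms. Two minor streamlinings: the vanishing of $H^{-1}(C\otL_A B)$ already follows from the long exact sequence because $H^{-1}$ of the middle term and $H^{-2}$ of $A_0\hat{\otimes}_A B$ vanish, so the surjectivity you worry about is not needed there; and the identification of $C\hat{\otimes}_A B$ with the kernel is most cleanly obtained from the standard fact that $H^0$ (in the left heart) of $C\otL_A B$ is $C\hat{\otimes}_A B$, being the zeroth derived functor of the right-exact $-\hat{\otimes}_A B$, rather than from right-exactness applied to the short exact sequence, which by itself does not identify $C\hat{\otimes}_A B$ with that kernel.
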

This is obvious.
\begin{theorem}
Let $A$ be a Liu $k$-algebra. Let $B,C$ be Liu $k$-algebras over $A$ such that $\Sp C\rightarrow \Sp A$ is a Liu domain. Then we have a natural isomorphism
\[
	C\otL_A B \rightarrow C\hat{\otimes}_A B\,.
\]
In particular, $\Sp C\rightarrow \Sp A$ is a homotopy monomorphism.
\end{theorem}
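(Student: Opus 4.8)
The substance of the statement is that $C$ is transversal to the morphism $A\rightarrow B$, i.e.\ that the canonical map $C\otL_A B\rightarrow C\hat{\otimes}_A B$ in $\mathrm{D}^-(A)$ is an isomorphism; the last assertion then follows by taking $B=C$, since $\Sp C\rightarrow\Sp A$ being an analytic domain embedding gives $\Sp C=\Sp C\times_{\Sp A}\Sp C$, hence $C\hat{\otimes}_A C=C$ and therefore $C\otL_A C=C$, which is \cref{def:homepi}(3).

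The plan is to cut $\Sp C$ into rational domains of $\Sp A$ and run a \v{C}ech computation. First I would record a Gerritzen--Grauert-type fact for the Liu space $\Sp A$: every affinoid domain of $\Sp A$ is a finite union of rational domains of $\Sp A$ (cut out by functions in $A$). For affinoid $\Sp A$ this is the classical theorem (\cite[\S7.2]{BGR84}, together with Temkin's work in the Berkovich setting); in general one should deduce it from holomorphic separability of $\Sp A$ by the usual Gerritzen--Grauert induction, or extract it from the structure theory in \cite{MP21}. Since $\Sp C$ is quasi-compact and $\Sp C\hookrightarrow\Sp A$ is an analytic domain embedding, it then follows that $\Sp C=V_1\cup\cdots\cup V_n$ with each $V_i$ a rational domain of $\Sp A$; and since a finite intersection of rational domains of $\Sp A$ is again a rational domain of $\Sp A$, every $V_{i_0}\cap\cdots\cap V_{i_p}$ appearing below is rational, hence affinoid, hence a Liu domain of $\Sp C$.

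Next I would apply the Tate acyclicity theorem (\cref{thm:Tateacyc}) on the Liu space $\Sp C$, with the covering $\{V_i\}$ by Liu domains and the (trivially transversal) module $C$ over itself: the augmented \v{C}ech complex
\[
0\rightarrow C\rightarrow \prod_i\mathcal{O}(V_i)\rightarrow \prod_{i<j}\mathcal{O}(V_i\cap V_j)\rightarrow\cdots\rightarrow\mathcal{O}(V_1\cap\cdots\cap V_n)\rightarrow 0
\]
is admissible exact, so $C$ is quasi-isomorphic in $\mathrm{D}^-(A)$ to the truncated complex $K^\bullet=[\,\prod_i\mathcal{O}(V_i)\rightarrow\cdots\rightarrow\mathcal{O}(V_1\cap\cdots\cap V_n)\,]$. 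Applying $-\otL_A B$ and using the preceding lemma (each $\mathcal{O}(V_{i_0}\cap\cdots\cap V_{i_p})$ is transversal to $A\rightarrow B$, being the algebra of a rational domain of $\Sp A$), a d\'evissage along the stupid truncations of the bounded complex $K^\bullet$ shows $K^\bullet\otL_A B\simeq K^\bullet\hat{\otimes}_A B$ computed termwise. By \cref{lma:Liutensor}, $\mathcal{O}(V_{i_0}\cap\cdots\cap V_{i_p})\hat{\otimes}_A B=\mathcal{O}(f^{-1}(V_{i_0}\cap\cdots\cap V_{i_p}))$, so $K^\bullet\hat{\otimes}_A B$ is the truncated \v{C}ech complex of the covering $\{f^{-1}V_i\}$ of the Liu space $f^{-1}\Sp C=\Sp(C\hat{\otimes}_A B)$ (Liu by \cref{lma:Liutensor}); a second application of \cref{thm:Tateacyc} there shows that this complex computes $C\hat{\otimes}_A B$ in degree $0$. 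Tracking the identifications, $C\otL_A B\rightarrow C\hat{\otimes}_A B$ is the natural map and is an isomorphism. One can instead feed the rational-domain decomposition into the two-piece gluing lemma above and induct on the number of pieces, but that route has the additional burden of choosing the decomposition so that all partial unions remain Liu domains.

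The main obstacle is the Gerritzen--Grauert step for a \emph{non-affinoid} Liu space: one must be certain that an affinoid (hence every Liu) domain of $\Sp A$ is a finite union of rational domains of $\Sp A$ \emph{itself}, since the rational-domain lemma is phrased in terms of functions in $A$ rather than in the algebra of some affinoid subdomain. Once that is in hand, the remainder is routine bookkeeping with \v{C}ech complexes and with derived completed tensor products of bounded complexes of Banach modules, all of it already prepared by the lemmas above.
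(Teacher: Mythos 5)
Your overall strategy is the one the paper intends: its proof of this theorem simply defers to \cite[Proof of Theorem~5.16]{BBK17}, for which the three preceding lemmas (Laurent domains, rational domains, two-piece gluing) are exactly the preparation, and that argument likewise decomposes the subdomain into rational pieces and reassembles. Your reassembly via two applications of \cref{thm:Tateacyc} together with termwise transversality and \cref{lma:Liutensor} is correct and in fact convenient here, since it avoids having to know that intermediate unions of rational domains are Liu, which a naive induction on the two-piece lemma would require; the reduction of the ``in particular'' clause to the case $B=C$, using $\Sp C\times_{\Sp A}\Sp C=\Sp C$ and \eqref{eq:h0tensor}, is also fine.

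The genuine gap is the step you yourself single out, and it cannot be waved through: you need that every affinoid domain of the (in general non-affinoid) Liu space $\Sp A$ --- or at least a covering of the given Liu domain $\Sp C$ --- is a finite union of rational domains cut out by \emph{global} functions $f_1,\dots,f_n,g\in A$ satisfying the unit-ideal hypothesis of the rational-domain lemma, and likewise that finite intersections of such domains are again of this form. This is precisely the point at which ``the proof is the same as \cite{BBK17}'' needs justification in the Liu setting, because \cite[Theorem~5.16]{BBK17} invokes the classical Gerritzen--Grauert theorem for an ambient \emph{affinoid} space. Your proposed fallback, ``deduce it from holomorphic separability of $\Sp A$ by the usual Gerritzen--Grauert induction,'' is not a proof: the classical argument rests on the structure theory of affinoid algebras (Weierstrass division, generating systems, and the description of affinoid subdomains), none of which is available verbatim for a Liu algebra, and mere point separation by global functions is far from sufficient. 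A workable route is the structure theorem already used in the paper (\cite[Proposition~3.13]{MP21}, cf.\ the proof of \cref{thm:Liu}): cover $\Sp A$ by the affinoid domains $h^{-1}(Z_i)$, apply the classical Gerritzen--Grauert theorem to $W\cap h^{-1}(Z_i)$ inside the affinoid $h^{-1}(Z_i)$, and then promote rational subdomains of $h^{-1}(Z_i)$ to rational domains of $\Sp A$ by perturbing their defining functions into the dense image of $\mathcal{O}(Z_i)$ (the Runge condition) and using transitivity of rational domains inside the affinoid $Z$, checking at the same time the unit-ideal condition. Each of these steps requires an argument; until they are supplied, the covering $\{V_i\}$ on which your entire \v{C}ech computation runs is not known to exist, so the proof is incomplete at its crucial input.
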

\begin{proof}
Having established the three preceding lemmas, the proof is the same as \cite[Proof of Theorem~5.16]{BBK17}.
\end{proof}
\begin{theorem}
Let $f:A\rightarrow B$ be a morphism in $\LiuAlgCat_k$. Then $f$ is a homotopy epimorphism if{f} the corresponding morphism $\Sp B\rightarrow \Sp A$ is a Liu domain.
\end{theorem}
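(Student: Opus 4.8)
The plan is to prove the \emph{if} direction directly from \cref{thm:liut} and to obtain the \emph{only if} direction by localization plus the known affinoid case. For the \emph{if} direction: if $\Sp B\to\Sp A$ is a Liu domain, then applying \cref{thm:liut} with $C=B$ gives $B\otL_A B=B\hat{\otimes}_A B$; since an analytic domain embedding is a monomorphism in $\AnaCat_k$ we have $\Sp B\times_{\Sp A}\Sp B=\Sp B$, this space is Liu by \cref{prop:LiuCart}, so by \eqref{eq:h0tensor} we get $B\hat{\otimes}_A B=B$, hence $B\otL_A B=B$ and $f$ is a homotopy epimorphism.

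For the converse, suppose $B\otL_A B=B$. First I would record the consequence at the underived level: taking zeroth cohomology of the complex $B\otL_A B\cong B$ (which computes $B\hat{\otimes}_A B$) yields $B\hat{\otimes}_A B=B$, equivalently the two projections $\Sp(B\hat{\otimes}_A B)\rightrightarrows\Sp B$ coincide, i.e. $f\colon\Sp B\to\Sp A$ is a monomorphism in $\AnaCat_k$, in particular injective. This injectivity is the glue for everything that follows. Next, localize on both sides: choose a finite covering of $\Sp A$ by affinoid domains $\{\Sp A_i\}$ (possible since $\Sp A$, being Liu, is separated and quasi-compact), set $B_i:=B\hat{\otimes}_A A_i$, a Liu $k$-algebra by \cref{lma:Liutensor}, and use associativity of $\otL$ together with \cref{thm:liut} in the form $B\hat{\otimes}_A A_i=B\otL_A A_i$ to see that $B_i\otL_{A_i}B_i=(B\otL_A B)\otL_A A_i=B_i$, so $A_i\to B_i$ is again a homotopy epimorphism. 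Then cover the Liu space $\Sp B_i$ by finitely many affinoid domains $\{\Sp D_{ij}\}$; as $\Sp D_{ij}$ is an affinoid, hence Liu, domain of $\Sp B_i$, the \emph{if} direction gives a homotopy epimorphism $B_i\to D_{ij}$, and composing, $A_i\to D_{ij}$ is a homotopy epimorphism of $k$-affinoid algebras. At this point I would invoke the affinoid case proved in \cite{BBK17} (which rests ultimately on the Gerritzen--Grauert/Temkin description of affinoid subdomains; one reduces to $k$ non-trivially valued by base change along some $k_r$, as done repeatedly in this appendix): it gives that $\Sp D_{ij}\to\Sp A_i$ is an affinoid domain embedding, so each $\Sp D_{ij}$ is an affinoid domain of $\Sp A$ on which $f$ restricts to the inclusion.

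It then remains to glue. Writing $V_{ij}\subseteq\Sp A$ for the image of $\Sp D_{ij}$, so $f|_{\Sp D_{ij}}\colon\Sp D_{ij}\xrightarrow{\sim}V_{ij}$, the monomorphism property forces $f^{-1}(V_{ij})=\Sp D_{ij}$ (a point of $\Sp B$ over $V_{ij}$ shares its image with a unique point of $\Sp D_{ij}$, hence equals it); since $\Sp A$ is separated, $V_{ij}\cap V_{i'j'}$ is again an affinoid domain of $\Sp A$, and the same argument yields $f^{-1}(V_{ij}\cap V_{i'j'})=\Sp D_{ij}\cap\Sp D_{i'j'}$ (intersection inside $\Sp B$), with $f$ identifying the two sides. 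Thus the finitely many affinoid domains $\{V_{ij}\}$ of the separated space $\Sp A$ have affinoid pairwise intersections, so their union $W$ is an analytic domain of $\Sp A$, and the isomorphisms $\Sp D_{ij}\xrightarrow{\sim}V_{ij}$ are compatible with the gluing data, hence glue to an isomorphism $\Sp B\xrightarrow{\sim}W$ over $\Sp A$ (cf. \cite[Proposition~1.3.3]{Berk93}); so $\Sp B\to\Sp A$ realizes $\Sp B$ as an analytic domain, and being Liu it is a Liu domain.

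The hard part will be the affinoid input borrowed from \cite{BBK17} (hence from Gerritzen--Grauert/Temkin): once that is granted, together with the fact that $f$ is a monomorphism, the localization and the gluing are routine bookkeeping, exactly paralleling the scheme-theoretic prototype. A secondary point to handle with care is the passage between trivially and non-trivially valued $k$ when quoting the affinoid statement, for which the base change along a $k$-free polyradius used throughout this appendix should suffice.
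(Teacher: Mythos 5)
Your argument is correct in substance, but it takes a genuinely different route from the paper's. The paper disposes of the ``only if'' direction by re-running the proof of \cite[Theorem~5.31]{BBK17} in the Liu setting, the preceding lemmas of the appendix having been generalized exactly so that that argument transfers; you instead use the affinoid case of that theorem as a black box and bootstrap from it. Concretely: from $B\otL_A B=B$ you extract $B\hat{\otimes}_A B=B$, hence via \cref{lma:Liutensor}, \eqref{eq:h0tensor} and \cref{thm:Liu} that $\Sp B\to\Sp A$ is a monomorphism, in particular injective on points; you localize over a finite affinoid cover $\{\Sp A_i\}$, use \cref{thm:liut} and derived base change to see that $A_i\to B_i$ is again a homotopy epimorphism, cover $\Sp B_i$ by affinoids $\Sp D_{ij}$, apply the affinoid theorem to the composites $A_i\to D_{ij}$, and glue the resulting affinoid domains $V_{ij}$ using injectivity (so $f^{-1}(V_{ij})=\Sp D_{ij}$) and the fact that a finite union of affinoid domains of a separated space is an analytic domain, making $f$ G-locally on the target an isomorphism onto $W=\bigcup V_{ij}$. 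What your route buys is that only the \emph{statement} of the affinoid case is needed, together with \cref{thm:liut} (your ``if'' direction coincides with the paper's, via the homotopy monomorphism clause of \cref{thm:liut}); what the paper's route buys is that it avoids the gluing bookkeeping and, more importantly, the point you yourself flag: to invoke the affinoid statement when $k$ is trivially valued or the $D_{ij}$ are non-strict, you must base change along a suitable $k_r$ (using \cref{prop:flatkr} to preserve the homotopy-epimorphism condition) and then \emph{descend} the property of being an affinoid domain along $k_r/k$; this needs a reference or an argument, not just a gesture. Two smaller glosses deserve a line each: that $B\otL_A B\cong B$ forces $B\hat{\otimes}_A B\cong B$ (compare zeroth cohomologies, noting the multiplication map is always a split epimorphism), and that a monomorphism of analytic spaces is injective on points (surjectivity of $|\Sp B\times_{\Sp A}\Sp B|\to |\Sp B|\times_{|\Sp A|}|\Sp B|$). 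With these points addressed, your reduction-plus-gluing proof is a valid alternative to the paper's.
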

\begin{proof}
Same proof as \cite[Theorem~5.31]{BBK17}.
\end{proof}
In terms of \cite{BBK17}, we have shown that $\LiuAlgCat_k$ is a homotopy Zariski transversal subcategory of $\BanCat_k$.

\newpage
\printbibliography

\bigskip
  \footnotesize

  Mingchen Xia, \textsc{Department of Mathematics, Chalmers Tekniska Högskola, G\"oteborg}\par\nopagebreak
  \textit{Email address}, \texttt{xiam@chalmers.se}\par\nopagebreak
  \textit{Homepage}, \url{http://www.math.chalmers.se/~xiam/}.
\end{document}